\documentclass[10pt]{amsart}

\usepackage[T1]{fontenc}
\usepackage{lmodern}
\usepackage{microtype}
\usepackage{amsmath,amssymb,amsthm,mathtools}
\usepackage{booktabs}
\usepackage{tabularx}
\usepackage{graphicx}
\usepackage{enumitem}
\usepackage{tikz}
\usetikzlibrary{arrows.meta,calc,positioning}
\usepackage[
  colorlinks=true,
  citecolor=blue,
  linkcolor=blue,
  urlcolor=blue,
  pdftitle={Sharp Order Laws and Finite-Defect Stability for Domination versus Small Laplacian Eigenvalues in Trees},
  pdfauthor={Yufeng Wang},
  pdfsubject={Extremal and spectral graph theory for subcubic trees},
  pdfkeywords={domination number, Laplacian eigenvalues, subcubic tree, extremal graph theory, finite defect kernel}
]{hyperref}
\usepackage[nameinlink,capitalise]{cleveref}

\newtheorem{theorem}{Theorem}[section]
\newtheorem{lemma}[theorem]{Lemma}
\newtheorem{proposition}[theorem]{Proposition}
\newtheorem{corollary}[theorem]{Corollary}
\theoremstyle{definition}
\newtheorem{definition}[theorem]{Definition}

\theoremstyle{remark}
\newtheorem{remark}[theorem]{Remark}

\newcommand{\In}{\operatorname{In}}
\newcommand{\diag}{\operatorname{diag}}

\newcommand{\trans}{\mathsf{T}}

\tikzset{
  p2hub/.style={circle,draw=black,fill=black,minimum size=5pt,inner sep=0pt},
  p2support/.style={circle,draw=black,fill=white,minimum size=5pt,inner sep=0pt},
  p2deep/.style={circle,draw=blue!70!black,fill=blue!18,
    minimum size=5pt,inner sep=0pt},
  p2leaf/.style={rectangle,draw=black,fill=black,minimum size=3.5pt,inner sep=0pt},
  p2edge/.style={draw=black!60,line width=.8pt},
  p2new/.style={draw=red!75!black,line width=1.25pt},
  p2mark/.style={circle,draw=red!75!black,fill=white,line width=1.05pt,
    minimum size=6.2pt,inner sep=0pt},
  p2flow/.style={-{Latex[length=2mm]},line width=.7pt},
}

\newcommand{\PtwoMarkedSmithGraphic}{%
\begin{tikzpicture}[font=\small,baseline,
  smithpanel/.style={draw=black!18,rounded corners=3pt,fill=black!1}]
  \path[use as bounding box] (-.05,-5.42) rectangle (12.45,1.52);
  \draw[smithpanel] (.10,-1.66) rectangle (6.05,1.42);
  \draw[smithpanel] (6.35,-1.66) rectangle (12.30,1.42);
  \draw[smithpanel] (.10,-5.32) rectangle (6.05,-1.94);
  \draw[smithpanel] (6.35,-5.32) rectangle (12.30,-1.94);

  \begin{scope}[xshift=.20cm,yshift=.08cm]
    \node[font=\small] at (2.85,1.05) {(a) $A_3$};
    \node[p2mark] (ax) at (2.85,.38) {};
    \node[p2leaf] (al1) at (2.15,.75) {};
    \node[p2leaf] (al2) at (2.15,.01) {};
    \draw[p2new] (ax)--(al1) (ax)--(al2);
    \draw[p2flow] (2.85,.05)--(2.85,-.32);
    \node[p2deep,label={[font=\small]right:$2$}] at (2.85,-.62) {};
    \node at (2.85,-1.03) {$\delta=0,\ |C|=1$};
    \node[font=\footnotesize,text=blue!70!black] at (2.85,-1.52) {PASS};
  \end{scope}

  \begin{scope}[xshift=6.45cm,yshift=.08cm]
    \node[font=\small] at (2.85,1.05) {(b) $D_n$};
    \node[p2mark] (bx) at (2.05,.38) {};
    \node[p2leaf] (bl1) at (1.45,.75) {};
    \node[p2leaf] (bl2) at (1.45,.01) {};
    \node[p2deep] (bp1) at (2.82,.38) {};
    \node at (3.45,.38) {$\cdots$};
    \node[p2deep] (bp2) at (4.18,.38) {};
    \draw[p2new] (bx)--(bl1) (bx)--(bl2);
    \draw[p2edge] (bx)--(bp1) (bp1)--(3.24,.38)
      (3.68,.38)--(bp2);
    \draw[p2flow] (2.85,.05)--(2.85,-.32);
    \node[p2deep,label={[font=\small]below:$1$}] (bc1) at (1.78,-.52) {};
    \node[p2deep,label={[font=\small]below:$1$}] (bc2) at (2.53,-.52) {};
    \node at (3.16,-.52) {$\cdots$};
    \node[p2deep,label={[font=\small]below:$2$}] (bc3) at (3.93,-.52) {};
    \draw[p2edge] (bc1)--(bc2) (bc2)--(2.91,-.62)
      (3.41,-.62)--(bc3);
    \node[text=red!70!black] at (2.85,-1.22) {$\Longrightarrow\text{(a)}$};
  \end{scope}

  \begin{scope}[xshift=.20cm,yshift=-3.54cm]
    \node[font=\small] at (2.85,1.05) {(c) $\widetilde D_n$};
    \node[p2mark] (cx) at (1.92,.38) {};
    \node[p2leaf] (cl1) at (1.32,.75) {};
    \node[p2leaf] (cl2) at (1.32,.01) {};
    \node at (2.72,.38) {$\cdots$};
    \node[p2deep] (cc) at (3.57,.38) {};
    \node[p2deep] (cr1) at (4.39,.75) {};
    \node[p2deep] (cr2) at (4.39,.01) {};
    \draw[p2new] (cx)--(cl1) (cx)--(cl2);
    \draw[p2edge] (cx)--(2.45,.38) (2.99,.38)--(cc)--(cr1) (cc)--(cr2);
    \draw[p2flow] (2.85,.05)--(2.85,-.32);
    \node[p2deep,label={[font=\small]below:$1$}] (cd1) at (1.80,-.50) {};
    \node[p2deep,label={[font=\small]below:$0$}] (cd0) at (2.90,-.50) {};
    \node[p2deep] (cd2) at (3.90,-.18) {};
    \node[p2deep] (cd3) at (3.90,-.82) {};
    \node[font=\small,anchor=west] at (4.23,-.18) {$2$};
    \node[font=\small,anchor=west] at (4.23,-.82) {$2$};
    \draw[p2edge] (cd1)--(cd0)--(cd2) (cd0)--(cd3);
    \node at (3.10,-1.08) {$\delta=1,\ |C|=4$};
    \node[font=\footnotesize,text=blue!70!black] at (2.85,-1.59) {PASS};
  \end{scope}

  \begin{scope}[xshift=6.45cm,yshift=-3.54cm]
    \node[font=\small] at (2.85,1.05) {(d) $\widetilde D_4$};
    \node[p2mark] (dx) at (2.85,.38) {};
    \node[p2leaf] (dl1) at (2.15,.75) {};
    \node[p2leaf] (dl2) at (2.15,.01) {};
    \node[p2deep] (dr1) at (3.55,.75) {};
    \node[p2deep] (dr2) at (3.55,.01) {};
    \draw[p2new] (dx)--(dl1) (dx)--(dl2);
    \draw[p2edge] (dx)--(dr1) (dx)--(dr2);
    \draw[p2flow] (2.85,.05)--(2.85,-.32);
    \node[p2deep] (dd1) at (1.85,-.50) {};
    \node[p2deep,label={[font=\small]below:$0$}] (dd0) at (2.85,-.50) {};
    \node[p2deep] (dd2) at (3.85,-.50) {};
    \node[font=\small,anchor=east] at (1.52,-.50) {$2$};
    \node[font=\small,anchor=west] at (4.18,-.50) {$2$};
    \draw[p2edge] (dd1)--(dd0)--(dd2);
    \node at (3.10,-1.08) {$\delta=1,\ |C|=3$};
    \node[font=\footnotesize,text=red!70!black] at (2.85,-1.59) {REJECT};
  \end{scope}
\end{tikzpicture}%
}

\newcommand{\PtwoBoundaryGraphic}{%
\begin{tikzpicture}[font=\scriptsize,baseline]
  \path[use as bounding box] (0,-1.7) rectangle (12.5,1.9);
  \begin{scope}[xshift=0cm]
    \node[font=\small] at (1.75,1.65)
      {(a) one loaded arm $(\times20)$};
    \node[p2hub] (h0) at (.55,0) {};
    \node[p2support] (x0) at (1.8,0) {};
    \node[p2leaf] (l01) at (2.9,.55) {};
    \node[p2leaf] (l02) at (2.9,-.55) {};
    \draw[p2edge] (h0)--(x0)--(l01) (x0)--(l02);
    \node[below=2pt of x0] {$x\in P$};
    \node[align=center] at (1.75,-1.45) {$T[P]$ is independent};
  \end{scope}

  \begin{scope}[xshift=4.5cm]
    \node[font=\small] at (1.75,1.65) {(b) terminal tail};
    \node[p2hub] (h1) at (.1,0) {};
    \node[p2support] (x1) at (1.15,0) {};
    \node[p2leaf] (l1) at (1.15,-1.0) {};
    \node[p2support] (u1) at (2.35,0) {};
    \node[p2leaf] (v1) at (3.45,0) {};
    \draw[p2edge] (h1)--(x1)--(l1) (u1)--(v1);
    \draw[p2new] (x1)--(u1);
    \node[align=center,text=red!70!black] at (1.75,-1.45)
      {$\{\kappa(x),\kappa(u)\}=\{0,1\}$};
  \end{scope}

  \begin{scope}[xshift=9cm]
    \node[font=\small] at (1.75,1.65) {(c) bridge split};
    \node[p2hub] (a2) at (0,0) {};
    \node[p2support] (x2) at (1.15,0) {};
    \node[p2support] (y2) at (2.35,0) {};
    \node[p2hub] (b2) at (3.5,0) {};
    \node[p2leaf] (lx2) at (1.15,-1.0) {};
    \node[p2leaf] (ly2) at (2.35,-1.0) {};
    \draw[p2edge] (a2)--(x2) (y2)--(b2) (x2)--(lx2) (y2)--(ly2);
    \draw[p2new] (x2)--(y2);
    \node[align=center,text=red!70!black] at (1.75,-1.45)
      {$\{\kappa(x),\kappa(y)\}=\{1,1\}$};
  \end{scope}
\end{tikzpicture}%
}

\newcommand{\PtwoExceptionalAssembliesGraphic}{%
\begin{tikzpicture}[font=\footnotesize,baseline]
  \begin{scope}[xshift=0cm]
    \node[font=\small] at (3.15,1.85) {(a) E0 base, $q=2$};
    \node[p2deep,label=above:$2$] (e0a) at (0,0) {};
    \node[p2deep,label=above:$0$] (e0b) at (.8,0) {};
    \node[p2deep,label=above:$2$] (e0c) at (1.6,0) {};
    \draw[p2edge] (e0a)--(e0b)--(e0c);
    \node[p2deep,label=above:$2$] (e0s) at (3.15,0) {};
    \node[p2deep,label=above:$2$] (e0d) at (4.7,0) {};
    \node[p2deep,label=above:$0$] (e0e) at (5.5,0) {};
    \node[p2deep,label=above:$2$] (e0f) at (6.3,0) {};
    \draw[p2edge] (e0d)--(e0e)--(e0f);
    \node[p2support] (e0x) at (2.35,.55) {};
    \node[p2leaf] (e0xl) at (2.35,1.18) {};
    \draw[p2new] (e0c)--(e0x)--(e0s);
    \draw[p2edge] (e0x)--(e0xl);
    \node[p2support] (e0y) at (3.95,-.55) {};
    \node[p2leaf] (e0yl) at (3.95,-1.18) {};
    \draw[p2new] (e0s)--(e0y)--(e0d);
    \draw[p2edge] (e0y)--(e0yl);
    \node[p2support] (e0t) at (.05,-.7) {};
    \node[p2leaf] (e0tl) at (.05,-1.25) {};
    \draw[p2edge] (e0a)--(e0t)--(e0tl);
    \node at (1.05,-.8) {$\cdots$};
  \end{scope}

  \begin{scope}[xshift=8.1cm]
    \node[font=\small] at (3.0,1.85) {(b) E1 base, $q=2$};
    \node[p2deep,label=left:$0$] (c0) at (1.2,0) {};
    \node[p2deep,label=left:$1$] (c1) at (0,.8) {};
    \node[p2deep,label=left:$2$] (c2) at (0,-.8) {};
    \node[p2deep,label=right:$2$] (c3) at (2.15,0) {};
    \draw[p2edge] (c0)--(c1) (c0)--(c2) (c0)--(c3);
    \node[p2deep,label=above:$2$] (p0) at (4.45,0) {};
    \node[p2deep,label=above:$0$] (p1) at (5.25,0) {};
    \node[p2deep,label=above:$2$] (p2) at (6.05,0) {};
    \draw[p2edge] (p0)--(p1)--(p2);
    \node[p2support] (z) at (3.25,.55) {};
    \node[p2leaf] (zl) at (3.25,1.18) {};
    \draw[p2new] (c1)--(z)--(p0);
    \draw[p2edge] (z)--(zl);
    \node[p2support] (ct) at (.05,-1.50) {};
    \node[p2leaf] (ctl) at (-.5,-1.50) {};
    \draw[p2edge] (c2)--(ct)--(ctl);
    \node[p2support] (pt) at (6.05,-.7) {};
    \node[p2leaf] (ptl) at (6.05,-1.25) {};
    \draw[p2edge] (p2)--(pt)--(ptl);
    \node[text=black!60] at (2.25,-.72) {$\cdots$};
  \end{scope}
\end{tikzpicture}%
}

\newcommand{\PtwoLiftGrammarGraphic}{%
\begin{tikzpicture}[font=\footnotesize,baseline]
  \begin{scope}[xshift=0cm]
    \node[font=\small] at (1.45,1.75) {(i) $e\in M$};
    \node[p2hub] (a0) at (0,0) {};
    \node[p2support] (a1) at (1.45,0) {};
    \node[p2hub] (a2) at (2.9,0) {};
    \draw[blue!70!black,line width=1.3pt] (a0)--(a1)--(a2);
    \node[font=\small] at (1.45,-1.55) {no leaf};
  \end{scope}
  \begin{scope}[xshift=4.25cm]
    \node[font=\small] at (1.45,1.75) {(ii) $e\notin M$};
    \node[p2hub] (b0) at (0,0) {};
    \node[p2support] (b1) at (1.45,0) {};
    \node[p2hub] (b2) at (2.9,0) {};
    \node[p2leaf] (bl) at (1.45,-.9) {};
    \draw[p2edge] (b0)--(b1)--(b2) (b1)--(bl);
    \node[font=\small] at (1.45,-1.55) {one leaf};
  \end{scope}
  \begin{scope}[xshift=8.5cm]
    \node[font=\small] at (1.45,1.75) {(iii) hub completion};
    \node[p2hub] (c0) at (.7,0) {};
    \node[p2support] (c1) at (1.95,.55) {};
    \node[p2leaf] (c2) at (3.05,1.05) {};
    \node[p2support] (c3) at (1.95,-.55) {};
    \node[p2leaf] (c4) at (3.05,-1.05) {};
    \draw[p2edge] (-.2,0)--(c0) (c0)--(c1)--(c2) (c0)--(c3)--(c4);
    \node[font=\small,align=center] at (1.45,-1.55) {$3-\deg_H(v)$ arms};
  \end{scope}
\end{tikzpicture}%
}

\newcommand{\PtwoPfourRecoveryGraphic}{%
\begin{tikzpicture}[font=\small,baseline,
  every label/.append style={font=\small}]
  \path[use as bounding box] (-1.65,-1.88) rectangle (9.15,4.15);
  \begin{scope}[yshift=3.35cm]
    \node[font=\small] at (3.75,.75) {skeleton $H=P_4$};
    \node[p2hub,label=above:$h_1$] (s1) at (0,0) {};
    \node[p2hub,label=above:$h_2$] (s2) at (2.5,0) {};
    \node[p2hub,label=above:$h_3$] (s3) at (5,0) {};
    \node[p2hub,label=above:$h_4$] (s4) at (7.5,0) {};
    \draw[blue!70!black,line width=1.3pt] (s1)--(s2) (s3)--(s4);
    \draw[p2edge] (s2)--(s3);
    \node[font=\small,blue!70!black] at (1.25,-.42) {$M$};
    \node[font=\small,blue!70!black] at (6.25,-.42) {$M$};
  \end{scope}

  \begin{scope}
    \node[font=\small,anchor=west,fill=white,inner sep=1.5pt]
      at (4.45,1.48) {complete lift $W(P_4)$};
    \node[p2hub] (h1) at (0,0) {};
    \node[p2deep] (m12) at (1.25,0) {};
    \node[p2hub] (h2) at (2.5,0) {};
    \node[p2support] (c23) at (3.75,0) {};
    \node[p2hub] (h3) at (5,0) {};
    \node[p2deep] (m34) at (6.25,0) {};
    \node[p2hub] (h4) at (7.5,0) {};
    \draw[blue!70!black,line width=1.3pt] (h1)--(m12)--(h2)
      (h3)--(m34)--(h4);
    \draw[p2edge] (h2)--(c23)--(h3);
    \node[p2leaf] (e23) at (3.75,-1.0) {};
    \draw[p2edge] (c23)--(e23);

    \node[p2support] (a11) at (-.7,.72) {};
    \node[p2leaf] (a12) at (-1.45,1.12) {};
    \node[p2support] (a13) at (-.7,-.72) {};
    \node[p2leaf] (a14) at (-1.45,-1.12) {};
    \draw[p2edge] (h1)--(a11)--(a12) (h1)--(a13)--(a14);

    \node[p2support] (a21) at (2.5,.9) {};
    \node[p2leaf] (a22) at (2.5,1.72) {};
    \draw[p2edge] (h2)--(a21)--(a22);

    \node[p2support] (a31) at (5,-.9) {};
    \node[p2leaf] (a32) at (5,-1.72) {};
    \draw[p2edge] (h3)--(a31)--(a32);

    \node[p2support] (a41) at (8.2,.72) {};
    \node[p2leaf] (a42) at (8.95,1.12) {};
    \node[p2support] (a43) at (8.2,-.72) {};
    \node[p2leaf] (a44) at (8.95,-1.12) {};
    \draw[p2edge] (h4)--(a41)--(a42) (h4)--(a43)--(a44);
  \end{scope}

  \draw[p2flow] (3.35,2.98) -- (3.35,1.96)
    node[midway,left=3pt,font=\small] {forward};
  \draw[p2flow] (4.15,1.96) -- (4.15,2.98)
    node[midway,right=3pt,font=\small] {inverse};
\end{tikzpicture}%
}

\newcommand{\PtwoOrderStaircaseGraphic}{%
\begin{tikzpicture}[font=\scriptsize,baseline]
  \path[use as bounding box] (-.65,-.65) rectangle (11.35,4.2);
  \draw[->,black!70] (0,0)--(10.85,0)
    node[right] {$n$};
  \draw[->,black!70] (0,0)--(0,3.85)
    node[above,align=center] {maximum\\score};
  \foreach \y/\lab in {0/-2,.5/-1,1/0,1.5/1,2/2,2.5/3,3/4,3.5/5}
    \draw[black!35] (-.08,\y)--(.08,\y)
      node[left=3pt,text=black!70] {\lab};
  \foreach \x/\lab in {0/2,1.5/11,3/20,4.5/29,6/38,7.5/47,9/56,10.5/65}
    \draw[black!35] (\x,-.08)--(\x,.08)
      node[below=3pt,text=black!70] {\lab};
  \draw[blue!70!black,line width=1.35pt]
    (0,0)--(1.5,0)--(1.5,.5)--(3,.5)--(3,1)--(4.5,1)
    --(4.5,1.5)--(6,1.5)--(6,2)--(7.5,2)--(7.5,2.5)
    --(9,2.5)--(9,3)--(10.5,3)--(10.5,3.5)--(10.75,3.5);
  \foreach \x/\y/\q in {1.5/.5/1,3/1/2,4.5/1.5/3,6/2/4,7.5/2.5/5,9/3/6,10.5/3.5/7}
    \node[circle,fill=black,minimum size=3.4pt,inner sep=0pt,
      label={[text=black!75]above left:$q=\q$}] at (\x,\y) {};
  \draw[red!70!black,dashed,line width=.9pt] (4.5,-.05)--(4.5,3.75);
  \node[red!70!black,align=left,anchor=west] at (4.72,3.55)
    {first positive score\\at $n=29$};
  \node[blue!70!black,anchor=west] at (.45,3.55)
    {$\left\lfloor(n-20)/9\right\rfloor$};
  \node[align=left,anchor=east,text=black!70] at (10.75,.52)
    {black points: $|W(H)|=9q+2$};
\end{tikzpicture}%
}

\newcommand{\PtwoKernelDecompositionGraphic}{%
\begin{tikzpicture}[font=\scriptsize,baseline]
  \path[use as bounding box] (-.15,-1.75) rectangle (12.45,1.95);
  \node[font=\small] at (2.65,1.65) {a tree with fixed slack $k$};
  \node[p2deep] (a) at (.15,0) {};
  \node[p2support] (b) at (.8,0) {};
  \node[p2deep] (c) at (1.45,0) {};
  \node[p2deep] (d) at (2.2,0) {};
  \node[p2support] (e) at (2.95,.55) {};
  \node[p2deep] (f) at (3.7,.95) {};
  \node[p2support] (g) at (2.95,-.55) {};
  \node[p2deep] (h) at (3.7,-.95) {};
  \node[p2deep] (i) at (4.45,.95) {};
  \node[p2support] (j) at (5.1,.95) {};
  \node[p2leaf] (jl) at (5.55,1.35) {};
  \node[p2deep] (k) at (4.45,-.95) {};
  \node[p2support] (l) at (5.1,-.95) {};
  \node[p2leaf] (ll) at (5.55,-1.35) {};
  \draw[p2edge] (a)--(b)--(c)--(d)--(e)--(f)--(i)--(j)--(jl)
    (d)--(g)--(h)--(k)--(l)--(ll);
  \draw[red!70!black,rounded corners=7pt,line width=1.05pt,
    fill=red!5] (1.78,-1.25) rectangle (3.28,1.25);
  \node[text=red!70!black,align=center] at (2.53,0)
    {bounded\\defect core};
  \node[p2mark] at (1.78,0) {};
  \node[p2mark] at (3.28,.72) {};
  \node[p2mark] at (3.28,-.72) {};
  \node[align=center,text=blue!70!black,fill=white,inner sep=1pt] at (4.60,0)
    {ordinary lift\\forests};

  \draw[p2flow] (5.80,0)--(6.65,0)
    node[midway,above=2pt,align=center,text=black!70]
      {record core\\and ports};

  \node[font=\small] at (9.55,1.65)
    {finite kernel with retained ports};
  \node[draw=red!70!black,fill=red!5,rounded corners=5pt,font=\small,
    minimum width=2.25cm,minimum height=1.35cm,align=center] (K) at (9.45,.05)
    {$K\in\mathcal K_k$\\bounded labelled type};
  \node[p2mark,label={[font=\small]above:$p_1$}] (p1) at (8.32,.62) {};
  \node[p2mark,label={[font=\small]below:$p_2$}] (p2) at (8.32,-.46) {};
  \node[p2mark,label={[font=\small]above:$p_r$}] (p3) at (10.58,.12) {};
  \node[p2deep] (p1d) at (7.55,.82) {};
  \node[p2support] (p1s) at (7.02,1.06) {};
  \node[p2deep] (p2d) at (7.55,-.66) {};
  \node[p2support] (p2s) at (7.02,-.90) {};
  \node[p2deep] (p3d) at (11.22,.34) {};
  \node[p2support] (p3s) at (11.78,.58) {};
  \draw[p2edge] (p1)--(p1d)--(p1s)
    (p2)--(p2d)--(p2s) (p3)--(p3d)--(p3s);
  \node[text=black!55] at (6.70,1.18) {$\cdots$};
  \node[text=black!55] at (6.70,-1.02) {$\cdots$};
  \node[text=black!55] at (12.08,.70) {$\cdots$};
  \node[p2mark] at (8.32,.62) {};
  \node[p2mark] at (8.32,-.46) {};
  \node[p2mark] at (10.58,.12) {};
  \node[p2deep] at (.30,-1.55) {};
  \node[font=\small,anchor=west,text=black!70] at (.43,-1.55) {deep};
  \node[p2support] at (1.55,-1.55) {};
  \node[font=\small,anchor=west,text=black!70] at (1.72,-1.55) {support};
  \node[p2leaf] at (3.15,-1.55) {};
  \node[font=\small,anchor=west,text=black!70] at (3.32,-1.55) {leaf};
  \node[p2mark] at (4.15,-1.55) {};
  \node[font=\small,anchor=west,text=black!70] at (4.35,-1.55) {port};
\end{tikzpicture}%
}

\newcommand{\PtwoPhaseAtlasGraphic}{%
\begin{tikzpicture}[font=\footnotesize,baseline,
  band/.style={rounded corners=3pt,minimum height=.78cm,align=center,
    inner sep=3pt,line width=.75pt},
  card/.style={rounded corners=2pt,minimum height=.62cm,align=center,
    inner sep=2.5pt,line width=.65pt,font=\footnotesize},
  phasearrow/.style={-{Stealth[length=2mm]},black!60,line width=.65pt}]
  \path[use as bounding box] (-.10,-2.08) rectangle (12.45,1.78);
  \node[font=\small] at (6.2,1.52)
    {ordered integer slack layers $\Phi\in\mathbb Z_{\ge0}$};

  \node[band,draw=blue!65!black,fill=blue!5,minimum width=1.65cm] (a)
    at (.95,.62) {$0\le\Phi\le19$};
  \node[band,draw=red!65!black,fill=red!3,minimum width=.72cm] (b)
    at (2.35,.62) {$20$};
  \node[band,draw=red!65!black,fill=red!3,minimum width=.72cm] (c)
    at (3.30,.62) {$21$};
  \node[band,draw=teal!65!black,fill=teal!5,minimum width=1.70cm] (d)
    at (4.85,.62) {$22\le\Phi\le39$};
  \node[band,draw=red!65!black,fill=red!3,minimum width=.72cm] (e)
    at (6.35,.62) {$40$};
  \node[band,draw=red!65!black,fill=red!3,minimum width=.72cm] (f)
    at (7.30,.62) {$41$};
  \node[band,draw=red!65!black,fill=red!3,minimum width=.72cm] (g)
    at (8.25,.62) {$42$};
  \node[band,draw=black!45,dashed,fill=black!2,minimum width=2.15cm] (h)
    at (10.55,.62) {$\Phi>42$};

  \draw[phasearrow] (a)--(b);
  \draw[phasearrow] (b)--(c);
  \draw[phasearrow] (c)--(d);
  \draw[phasearrow] (d)--(e);
  \draw[phasearrow] (e)--(f);
  \draw[phasearrow] (f)--(g);
  \draw[phasearrow] (g)--(h);

  \node[card,draw=blue!65!black,fill=blue!2,minimum width=1.58cm] (ca)
    at (.95,-.50) {\textsc{loading}\\canonical lifts};
  \node[card,draw=red!65!black,fill=red!2,minimum width=.90cm] (cb)
    at (1.95,-1.48) {\textsc{residue}\\first};
  \node[card,draw=red!65!black,fill=red!2,minimum width=1.25cm] (cc)
    at (3.30,-.50) {\textsc{assembly}\\$E0/E1$; expansion};
  \node[card,draw=teal!65!black,fill=teal!2,minimum width=1.65cm] (cd)
    at (4.85,-1.48) {\textsc{loading}\\uniform corridor};
  \node[card,draw=red!65!black,fill=red!2,minimum width=.90cm] (ce)
    at (6.35,-.50) {\textsc{residue}\\second};
  \node[card,draw=red!65!black,fill=red!2,minimum width=1.40cm] (cf)
    at (7.30,-1.48) {\textsc{new atoms}\\$G_0,G_1,S^\star$};
  \node[card,draw=red!65!black,fill=red!2,minimum width=1.55cm] (cg)
    at (8.65,-.50) {\textsc{new atoms}\\$H_0,H_1,K_0$--$K_2$};
  \node[card,draw=black!45,dashed,fill=black!1,minimum width=2.20cm] (ch)
    at (10.55,-1.48) {\textsc{theorem only}\\no explicit atlas here};

  \foreach \u/\v in {a/ca,c/cc,d/cd,e/ce,f/cf,g/cg,h/ch}
    \draw[black!45,line width=.55pt,shorten >=2.2pt]
      (\u.south)--(\v.north);
  \draw[black!45,line width=.55pt,rounded corners=2pt]
    (b.south)--(2.35,.06)--(1.95,.06)--(cb.north);
\end{tikzpicture}%
}

\newcommand{\PtwoCertificateWorkflowGraphic}{%
\begin{tikzpicture}[font=\scriptsize,baseline,
  certstep/.style={rounded corners=3pt,minimum width=2.05cm,
    text width=1.65cm,minimum height=.92cm,align=center,
    inner sep=3pt,line width=.7pt},
  badge/.style={circle,draw=black!45,fill=white,minimum size=4.8mm,
    inner sep=0pt,font=\scriptsize\bfseries}]
  \path[use as bounding box] (-.10,-.82) rectangle (12.45,1.62);
  \node[certstep,draw=black!55,fill=black!3] (s1) at (1.15,.30)
    {symbolic\\finite bound};
  \node[certstep,draw=blue!65!black,fill=blue!4] (s2) at (3.65,.30)
    {exact bounded\\enumeration};
  \node[certstep,draw=teal!65!black,fill=teal!4] (s3) at (6.15,.30)
    {claim--output\\binding};
  \node[certstep,draw=blue!65!black,fill=blue!4] (s4) at (8.65,.30)
    {independent\\scope check};
  \node[certstep,draw=red!65!black,fill=red!3] (s5) at (11.15,.30)
    {fail-closed\\manifest};
  \foreach \a/\b in {s1/s2,s2/s3,s3/s4,s4/s5}
    \draw[p2flow] (\a.east)--(\b.west);
  \foreach \x/\n in {s1/1,s2/2,s3/3,s4/4,s5/5}
    \node[badge] at ([yshift=6mm]\x.north) {\n};
  \node[font=\footnotesize,text=black!65] at (6.15,-.60)
    {finite, exact, manuscript-bound, and reproducible};
\end{tikzpicture}%
}

\newcommand{\PtwoAtomGalleryGraphic}{%
\begin{tikzpicture}[font=\footnotesize,baseline,
  atom/.style={circle,draw=blue!70!black,fill=white,minimum size=5.5pt,inner sep=0pt},
  spec/.style={circle,draw=red!70!black,fill=white,line width=1pt,
    minimum size=5.5pt,inner sep=0pt},
  ae/.style={draw=black!65,line width=.8pt},
  panel/.style={draw=black!20,rounded corners=3pt,fill=black!1},
  wt/.style={font=\footnotesize,fill=white,inner sep=.5pt,text=black}]
  \path[use as bounding box] (-.05,-5.70) rectangle (12.45,3.15);

  \node[anchor=west,font=\small\bfseries,text=black!70] at (.08,2.96) {$\Phi=41$};
  \draw[panel] (.12,.62) rectangle (3.92,2.72);
  \draw[panel] (4.12,.62) rectangle (7.92,2.72);
  \draw[panel] (8.12,.62) rectangle (12.32,2.72);

  \begin{scope}[xshift=.25cm,yshift=1.62cm]
    \node[font=\small] at (1.77,.77) {$G_0$};
    \node[atom] at (1.77,0) {};
    \node[wt,anchor=west] at (1.98,0) {$3$};
    \node[text=blue!70!black] at (1.77,-.72) {$Q_C\succeq0$};
  \end{scope}

  \begin{scope}[xshift=4.25cm,yshift=1.62cm]
    \node[font=\small] at (1.77,.77) {$G_1$};
    \node[atom] (gc) at (1.77,0) {};
    \node[atom] (g1) at (.82,.52) {};
    \node[atom] (g2) at (.82,-.52) {};
    \node[atom] (g3) at (2.72,0) {};
    \draw[ae] (gc)--(g1) (gc)--(g2) (gc)--(g3);
    \node[wt] at (1.77,-.27) {$0$};
    \node[wt,anchor=east] at (.60,.52) {$2$};
    \node[wt,anchor=east] at (.60,-.52) {$2$};
    \node[wt,anchor=west] at (2.94,0) {$2$};
    \node[text=blue!70!black] at (1.77,-.78) {$Q_C\succeq0$};
  \end{scope}

  \begin{scope}[xshift=8.28cm,yshift=1.66cm]
    \node[font=\small] at (1.60,.77) {$S^\star$};
    \node[spec] (su) at (.32,.06) {};
    \node[spec] (sm) at (.96,.06) {};
    \node[spec] (sc) at (1.60,.06) {};
    \node[spec] (sv) at (1.60,.49) {};
    \node[spec] (svm) at (2.28,.49) {};
    \node[spec] (svl) at (2.96,.49) {};
    \node[spec] (sw) at (1.60,-.37) {};
    \node[spec] (swm) at (2.28,-.37) {};
    \node[spec] (swl) at (2.96,-.37) {};
    \draw[ae] (su)--(sm)--(sc)--(sv)--(svm)--(svl)
      (sc)--(sw)--(swm)--(swl);
    \node[wt,anchor=east] at (.10,.06) {$2$};
    \node[wt] at (.96,-.19) {$0$};
    \node[wt,anchor=west] at (1.84,.06) {$0$};
    \node[wt,anchor=east] at (1.38,.49) {$1$};
    \node[wt] at (2.28,.24) {$0$};
    \node[wt,anchor=west] at (3.18,.49) {$2$};
    \node[wt,anchor=east] at (1.38,-.37) {$1$};
    \node[wt] at (2.28,-.62) {$0$};
    \node[wt,anchor=west] at (3.18,-.37) {$2$};
    \node[text=red!70!black] at (1.60,-.84) {$n_-(Q_C)=1$};
  \end{scope}

  \node[anchor=west,font=\small\bfseries,text=black!70] at (.08,.35) {$\Phi=42$};
  \draw[panel] (.12,-2.05) rectangle (3.92,.05);
  \draw[panel] (4.12,-2.05) rectangle (7.92,.05);
  \draw[panel] (8.12,-2.05) rectangle (12.32,.05);

  \begin{scope}[xshift=.25cm,yshift=-1.02cm]
    \node[font=\small] at (1.77,.78) {$H_0$};
    \node[atom] (h00) at (1.10,0) {};
    \node[atom] (h01) at (2.44,0) {};
    \draw[ae] (h00)--(h01);
    \node[wt] at (1.10,-.28) {$2$};
    \node[wt] at (2.44,-.28) {$1$};
    \node[text=blue!70!black] at (1.77,-.78) {$Q_C\succeq0$};
  \end{scope}

  \begin{scope}[xshift=4.25cm,yshift=-1.02cm]
    \node[font=\small] at (1.77,.78) {$H_1$};
    \node[atom] (h10) at (1.42,0) {};
    \node[atom] (h11) at (2.10,0) {};
    \node[atom] (h12) at (2.78,0) {};
    \node[atom] (h13) at (.62,.55) {};
    \node[atom] (h14) at (.62,-.55) {};
    \draw[ae] (h10)--(h11)--(h12) (h10)--(h13) (h10)--(h14);
    \node[wt] at (1.42,-.28) {$0$};
    \node[wt] at (2.10,-.28) {$1$};
    \node[wt,anchor=west] at (3.00,0) {$1$};
    \node[wt,anchor=east] at (.40,.55) {$2$};
    \node[wt,anchor=east] at (.40,-.55) {$2$};
    \node[text=blue!70!black] at (1.77,-.82) {$Q_C\succeq0$};
  \end{scope}

  \begin{scope}[xshift=8.25cm,yshift=-1.02cm]
    \node[font=\small] at (1.92,.78) {$K_0$};
    \node[spec] (k03) at (.30,0) {};
    \node[spec] (k02) at (.90,0) {};
    \node[spec] (k01) at (1.50,0) {};
    \node[spec] (k00) at (2.10,0) {};
    \node[spec] (k04) at (2.70,0) {};
    \node[spec] (k05) at (3.35,.50) {};
    \node[spec] (k06) at (3.35,-.50) {};
    \draw[ae] (k03)--(k02)--(k01)--(k00)--(k04)--(k05) (k04)--(k06);
    \foreach \x/\v in {.30/2,.90/0,1.50/1,2.10/0,2.70/0}
      \node[wt] at (\x,-.28) {$\v$};
    \node[wt,anchor=west] at (3.56,.50) {$1$};
    \node[wt,anchor=west] at (3.56,-.50) {$1$};
    \node[text=red!70!black] at (1.92,-.82) {$n_-(Q_C)=1$};
  \end{scope}

  \node[anchor=west,font=\small\bfseries,text=black!70] at (.88,-2.35)
    {$\Phi=42$ (continued)};
  \draw[panel] (.88,-5.52) rectangle (6.03,-2.58);
  \draw[panel] (6.47,-5.52) rectangle (11.62,-2.58);

  \begin{scope}[xshift=1.08cm,yshift=-4.05cm]
    \node[font=\small] at (2.38,1.18) {$K_1$};
    \node[spec] (k13) at (.35,-.15) {};
    \node[spec] (k12) at (.95,-.15) {};
    \node[spec] (k11) at (1.55,-.15) {};
    \node[spec] (k10) at (2.15,-.15) {};
    \node[spec] (k14) at (2.75,-.15) {};
    \node[spec] (k15) at (3.35,-.15) {};
    \node[spec] (k16) at (3.95,-.15) {};
    \node[spec] (k17) at (2.15,.52) {};
    \node[spec] (k18) at (3.10,.92) {};
    \node[spec] (k19) at (1.20,.92) {};
    \draw[ae] (k13)--(k12)--(k11)--(k10)--(k14)--(k15)--(k16)
      (k10)--(k17)--(k18) (k17)--(k19);
    \foreach \x/\v in {.35/2,.95/0,1.55/1,2.15/0,2.75/1,3.35/0,3.95/2}
      \node[wt] at (\x,-.45) {$\v$};
    \node[wt,anchor=west] at (2.38,.42) {$0$};
    \node[wt,anchor=east] at (.98,.92) {$1$};
    \node[wt,anchor=west] at (3.32,.92) {$1$};
    \node[text=red!70!black] at (2.15,-1.10) {$n_-(Q_C)=1$};
  \end{scope}

  \begin{scope}[xshift=6.67cm,yshift=-4.05cm]
    \node[font=\small] at (2.38,1.18) {$K_2$};
    \node[spec] (k23) at (.35,-.15) {};
    \node[spec] (k22) at (.95,-.15) {};
    \node[spec] (k21) at (1.55,-.15) {};
    \node[spec] (k20) at (2.15,-.15) {};
    \node[spec] (k24) at (2.75,-.15) {};
    \node[spec] (k25) at (3.35,-.15) {};
    \node[spec] (k26) at (3.95,-.15) {};
    \node[spec] (k27) at (2.15,.35) {};
    \node[spec] (k28) at (2.15,.85) {};
    \node[spec] (k29) at (3.25,.85) {};
    \draw[ae] (k23)--(k22)--(k21)--(k20)--(k24)--(k25)--(k26)
      (k20)--(k27)--(k28)--(k29);
    \foreach \x/\v in {.35/1,.95/1,1.55/0,2.15/0,2.75/1,3.35/0,3.95/2}
      \node[wt] at (\x,-.45) {$\v$};
    \node[wt,anchor=east] at (1.92,.35) {$1$};
    \node[wt,anchor=east] at (1.92,.85) {$0$};
    \node[wt,anchor=west] at (3.47,.85) {$2$};
    \node[text=red!70!black] at (2.15,-1.10) {$n_-(Q_C)=1$};
  \end{scope}
\end{tikzpicture}%
}

\title[Order laws and finite-defect stability]{Sharp Order Laws and Finite-Defect Stability for Domination versus Small Laplacian Eigenvalues in Trees}
\author{Yufeng Wang}
\address{Independent Researcher, Foshan, Guangdong, China}
\email{yufeng.wang.research@gmail.com}
\urladdr{https://yiyuxi123.github.io/}
\thanks{ORCID: \href{https://orcid.org/0009-0001-5120-2046}{0009-0001-5120-2046}.}
\date{August 27, 2026}
\subjclass[2020]{05C69, 05C50, 05C35}
\keywords{domination number, Laplacian eigenvalues, subcubic tree,
extremal graph theory, structural stability, finite defect kernel}

\begin{document}

\begin{abstract}
For a tree $T$, let $\gamma(T)$ be its domination number and let $\mu(T)$
count the Laplacian eigenvalues in $[0,1)$.  The strict inequality
$\gamma(T)/\mu(T)<4/3$ leaves open three linked questions: the exact
finite-order defect, the structure near equality, and whether a fixed defect
permits only finitely many local obstructions.  We answer all three.  For
every $n\ge2$,
\[
 \max_{|V(T)|=n}\bigl(7\gamma(T)-9\mu(T)\bigr)
 =\left\lfloor\frac{n-20}{9}\right\rfloor.
\]
Thus order twenty-nine is the first at which the ratio $9/7$ can be
exceeded.  The proof is based on an exact decomposition of
$\Phi(T)=|V(T)|-20-9(7\gamma(T)-9\mu(T))$ into nonnegative integer
terms, together with a clean-contraction identity.

For subcubic trees, equality cases are canonical lifts of trees with perfect
matchings.  We classify every layer through $\Phi=42$, locating successive
phase transitions at $20$, $21$, $40$, $41$, and $42$.  The finiteness
mechanism is simple to state: after the repeatable matched-skeleton pieces
are removed, fixed slack leaves only a bounded exceptional core.  Formally,
a local compactness theorem bounds every nonordinary weighted deep component
at fixed packing surplus and residual negative index.  Consequently, for
each fixed $k$, every subcubic tree with $\Phi(T)=k$ consists of a bounded
ported defect kernel, an arbitrary compatible forest of ordinary tiles, and
at most $\lfloor k/21\rfloor$ clean expansions.
All unbounded statements are proved symbolically; exact computation is used
only for explicitly bounded atom lists and independent finite verification.

\end{abstract}

\maketitle

\section{Introduction}
\label{sec:introduction}

The distribution of Laplacian eigenvalues below a fixed threshold often
reflects combinatorial covering structure.  For a finite simple graph $G$,
write $\gamma(G)$ for the domination number and let $\mu(G)$ be the number
of Laplacian eigenvalues in $[0,1)$, counted with multiplicity.  Hedetniemi,
Jacobs, and Trevisan proved $\mu(G)\le\gamma(G)$
\cite{HedetniemiJacobsTrevisan2016}; later work developed equality criteria,
diameter-sensitive bounds, and extensions to other graph classes
\cite{CardosoJacobsTrevisan2017,GuoXueLiu2025,BarikJena2026}.  A recent
tree-specific analysis proved the strict ratio bound $\gamma(T)/\mu(T)<4/3$
and constructed an infinite family approaching $4/3$, while also isolating
structural inequalities involving leaves and support vertices
\cite{RajendraprasadSankaranarayanan2025}.  These results leave an extremal
question that is both quantitative and structural: at a fixed order, how
large can the gap between domination and the small-eigenvalue count be, and
what do the near-extremal trees look like?

The surrounding eigenvalue-distribution literature is broader than the
domination inequality alone.  For trees, small-eigenvalue counts have been
studied under structural transformations and relative to the average degree
\cite{BragaRodriguesTrevisan2013,JacobsOliveiraTrevisan2021}; for general
graphs, related interval counts have been bounded using independence,
diameter, and degree data
\cite{AhanjidehAkbariFakharanTrevisan2022,AkbariAlaeiyanDarougheh2025}.
Those results provide distributional context, while the present problem
couples the threshold-one count specifically to domination and asks for an
exact finite-order and fixed-defect structure.

That recent ratio theorem is the strongest direct precedent for the present
problem.  It already establishes strictness and asymptotic sharpness; indeed,
the tree later denoted $W(P_2)$ is its eleven-vertex example.  The present
work begins at the finer questions not settled by a limiting ratio: the
exact optimum at each order, a canonical inverse description of the equality
objects, finite local structure at every fixed defect, and an explicit
low-defect phase atlas.  This comparison delimits the relation to the cited
predecessor; we make no broader priority claim.

The coefficients $7$ and $9$ are dictated by the canonical equality lifts,
not chosen after the fact.  If the skeleton has $2q$ vertices, then its lift
satisfies
\[
 (|V|,\gamma,\mu)=(9q+2,4q+1,3q+1),
 \qquad 7\gamma-9\mu=q-2.
\]
Thus the zero-score lift $W(P_4)$ ($q=2$) has ratio $\gamma/\mu=9/7$,
whereas $q=3$ is the first positive-score lift and has order twenty-nine.
The linear score therefore records the first crossing of a barrier already
present in the equality family.  We ask for the sharp score at every order
and for every tree close to it.  A clean
three-edge contraction lowers the order by three while changing both
parameters by one.  Three effects must therefore
be retained at once: the half-open interval $[0,1)$ requires exact negative
inertia of $L(T)-I$; contraction changes order and score at different rates;
and global equality does not locate the domination choices or spectral
directions among the branches.  Enumeration can expose early examples, but
cannot supply this normalization or a structural exhaustion.

Our starting point is an exact slack decomposition.  Separate a tree into
its leaves, penultimate vertices, and deep vertices.  On a tree with no
adjacent degree-two vertices, known domination--spectrum inequalities and a
deep-vertex domination estimate give
\[
 \Phi(T):=|V(T)|-20-9\bigl(7\gamma(T)-9\mu(T)\bigr)
  =a+b+20s+21t+60r,
\]
where all five terms are nonnegative integers.  Clean contraction increases
$\Phi$ by at least twenty-one, so this local identity extends to every tree
and proves the global order law.  More importantly, the separated
coefficients make low slack rigid: below twenty, the residue, spectral, and
domination--spectral terms must all vanish.

The five summands record surplus leaves, unused deep-domination capacity,
the residue of $p-1$ modulo three, spectral excess over the support count,
and integral slack in the reduced domination--spectrum inequality.  The
coefficients $20,21,$ and $60$ are forced by this change of coordinates,
and their separation turns the global inequality into a stability theorem.

The vanishing terms expose a second representation.  Eliminating every
leaf bundle from $L(T)-I$ leaves a direct sum of weighted Laplacians on the
deep components.  Exact domination decomposes over the same components.  A
distance-three packing argument and positive semidefiniteness force a local
boundary pressure, while a global core-edge identity forces the sum of all
local pressures to have the opposite sign.  The resulting sign collision
leaves only weighted three-vertex paths.  These paths are precisely the
local pieces of a matched-skeleton lift $W(H)$, where $H$ is a subcubic tree
with a perfect matching.

This second representation is reversible.  From an uncoloured equality
tree one recovers the leaves and their neighbours, reads each deep
three-vertex path as a matching edge, suppresses every two-hub support into
a nonmatching edge, and deletes the remaining support--leaf arms.  The
recovered skeleton is therefore canonical up to isomorphism rather than an
existential witness attached to the proof.

This mechanism gives a sharp stability theorem rather than only an equality
case.  After the three small trees $P_2,P_3,K_{1,3}$ are listed, every
subcubic tree with $0\le\Phi(T)\le19$ has order at least five, and every unit
of slack is an additional leaf on a distinct arm support of $W(H)$;
conversely, every such loading has exactly that slack.  The construction has
$2q+2$ available arms when $H$ has $2q$ vertices, so the capacity is exact.

Beyond the small slack-twenty tree $P_4$, the pressure sum changes by exactly
one: it becomes
$1-e_{PP}$ instead of $-e_{PP}$.  This leaves only two possibilities.  If
$e_{PP}=0$, the old structure continues and all twenty units are distinct
arm loads.  If $e_{PP}=1$, local pressure again vanishes everywhere, and the
endpoints of the unique penultimate edge have deep-incidence multiset either
$\{0,1\}$ or $\{1,1\}$.  The first case is a terminal tail beyond an arm;
the second splits a leafed support on a nonmatching skeleton edge into two
adjacent supports.  A putative zero-edge exceptional component is excluded
by the local surplus theorem and the subcubic degree boundary.  Thus the
order-at-least-five part of the boundary layer has a complete three-type
classification; together with $P_4$ this is the all-order classification.
The local pictures are shown in \Cref{fig:stability-boundary}.

At $\Phi=21$, clean expansion and an additional negative residual direction
first become compatible with the slack identity.  Every clean quotient is an
equality tree, while the reduced residue branch gives one-leaf inflations of
both slack-twenty defects and two incidence families.  Packing equality suppresses
the apparent spectral branch
to a perfect-matching tree, where an induced $P_6$ forces one negative
direction too many.

The same construction also solves the fixed-order extremal problem.  An
equality lift has order $9q+2$, domination number $4q+1$, and small-eigenvalue
count $3q+1$.  Adding leaves at existing supports preserves both parameters,
so every intervening order is attained after integer rounding.  Consequently
\[
 \max_{|V(T)|=n}(7\gamma(T)-9\mu(T))
   =\left\lfloor\frac{n-20}{9}\right\rfloor
\]
for every $n\ge2$.  Distinct-arm loading preserves maximum degree three,
which yields every subcubic extremal residue layer from order twenty-nine
onward.  The unloaded lifts also show that the strict general-tree ratio
bound has supremum $4/3$ already inside the subcubic class.

The low-slack classifications suggest a stronger principle: after ordinary
matched-skeleton material is removed, a fixed value of $\Phi$ should leave
only bounded local information.  We prove this by suppressing a maximum
distance-three packing inside each weighted deep component.  The suppressed
edges form an almost-perfect matching, and the residual matrix becomes a
bounded-rank positive-semidefinite perturbation of $I-A(F)$.  Its negative
index bounds the diameter of $F$ by interlacing with induced paths; a Moore
bound then bounds the component order.  The only zero-cost component is the
ordinary weighted path $(2,0,2)$.  Globally, the five-term and pressure
identities bound every exceptional component, support event, and port.
This yields, for every fixed $k$, a finite set of ported defect kernels, with
arbitrary ordinary lift forests at their ports and at most
$\lfloor k/21\rfloor$ clean expansions.

The endpoints of the explicit atlas are arithmetically forced by the two
smallest positive coordinate costs in the same identity.  The newly feasible
budgets are
\[
\resizebox{\textwidth}{!}{$
\begin{array}{c|c|l}
\Phi & (s,t,a+b) & \text{new structural budget}\\ \hline
20&(1,0,0)&\text{first residue}\\
21&(0,1,0)&\text{first spectral excess or one clean contraction}\\
40&(2,0,0)&\text{second residue}\\
41&(1,1,0)\ \text{or}\ (2,0,1)&\text{mixed residue--spectral or pressure-two}\\
42&(0,2,0),\ (1,1,1),\ \text{or}\ (2,0,2)&
\text{second spectral, mixed one-surplus, or two-surplus boundary}.
\end{array}
$}
\]
Thus $40=2\cdot20$, $41=20+21$, and $42=2\cdot21$; forty-two is the
second spectral boundary, not an arbitrary catalogue cutoff.  We refine the
finite-kernel theorem into an explicit phase atlas through this boundary.  A
uniform spectral gap removes the putative one-negative row
below forty.  Layers $20$--$39$ form one loading corridor.  At forty a
second residue creates double-residue incidence trees; at forty-one two
pressure-two atoms and one equality-size one-negative atom appear; at
forty-two two new positive-semidefinite atoms, three one-negative atoms, and
two-exception assemblies appear.  A separate all-orders finite reduction
eliminates the two-negative equality row at forty-two.  These transitions
are governed by the coefficients of the same five-term identity, so the
atlas is an extension of the order law rather than a detached catalogue.

The main contributions fall under three linked statements:
\begin{enumerate}[label=(\roman*)]
  \item an exact order law and its canonical near-equality structure: the
        five-term identity gives the fixed-order maximum, the unique inverse
        matched-skeleton representation, and the complete classifications
        through slack twenty-one;
  \item a fixed-defect finiteness theorem: local compactness leaves only a
        finite effective set of ported exceptional kernels at each fixed
        slack, with arbitrary ordinary lift forests at their ports; and
  \item the explicit low-defect phase atlas forced by the same coordinates:
        the corridor $20\le\Phi\le39$ and the complete classifications at
        $\Phi=40,41,42$, after symbolic finite reductions and bounded exact
        certificates.
\end{enumerate}
The unbounded steps are symbolic.  Exact computation is used only after an
all-orders reduction has made a local atom list finite, and for independent
testing of constructions, invariants, and branch interfaces.

The paper follows the same three stages.  The preliminaries through
\Cref{sec:slack-twenty-one} establish the exact order law, canonical lifts,
and sharp near-equality layers.  \Cref{sec:local-compactness,sec:finite-kernels}
prove the fixed-defect mechanism.  The remaining structural sections derive
the spectral exclusions and classify the forced phases through forty-two;
the final sections state the exact-computation boundary, limitations, and
open problems.

\section{Preliminaries and two exact decompositions}
\label{sec:preliminaries}

All graphs are finite and simple.  We write $|G|:=|V(G)|$ for the order of a
graph, and in particular $|T|=|V(T)|$ for a tree.  For a graph $G$, let $L(G)$ be its
Laplacian matrix.  The inertia of a real symmetric matrix $A$ is written
\[
 \In(A)=(n_+(A),n_-(A),n_0(A)).
\]
Since $L(G)$ is positive semidefinite,
\begin{equation}
 \mu(G)=n_-(L(G)-I).                                  \label{eq:mu-inertia}
\end{equation}

Let $T$ be a tree of order at least three.  Write $L$ for its leaf set,
$P=N_T(L)$ for its penultimate set, and $D=V(T)\setminus(L\cup P)$ for its
deep set; their cardinalities are $\ell,p,d$.  For $v\in D$, let
\[
 b_v=|N_T(v)\cap P|.
\]
The components of $T[D]$ will always be denoted by $C$.
For $v\in V(C)$ we call
\[
 \deg_T(v)=\deg_C(v)+b_v
\]
its \emph{total degree}.  Thus total degree always means degree in the
original tree, while $\deg_C(v)$ means degree inside the weighted deep
component.  The terminology is used repeatedly in the local compactness
and phase-classification arguments.

\subsection{Leaf bundles and inertia}

The following congruence is the spectral reason that additional leaves can
be tracked exactly.  It permits arbitrary edges inside the support set.

\begin{lemma}[Leaf-bundle congruence]
\label{lem:leaf-bundle}
Let $G$ be a graph whose pendant vertices form nonempty bundles $L_u$
indexed by a set $U$ of non-pendant support vertices.  Put
$\lambda_u=|L_u|$, $R=\diag(\sqrt{\lambda_u}:u\in U)$, and
$D=V(G)\setminus(\bigcup_u L_u\cup U)$.  Then
\[
 L(G)-I\ \cong
 \begin{pmatrix}0&-R\\-R&0\end{pmatrix}
 \oplus (L(G)-I)[D,D]\oplus 0_{\sum_u(\lambda_u-1)},
\]
where $0_m$ denotes the $m\times m$ zero matrix and $\cong$ denotes real
congruence.  Consequently,
\begin{equation}
 n_-(L(G)-I)=|U|+n_-((L(G)-I)[D,D]).                 \label{eq:bundle-negative}
\end{equation}
\end{lemma}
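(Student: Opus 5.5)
The plan is an explicit sequence of congruences on $L(G)-I$, followed by Sylvester's law of inertia. Order the vertices as leaves, then supports $U$, then $D$.

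\emph{Row structure.} Every pendant vertex $x\in L_u$ has degree one. Its diagonal entry in $L(G)-I$ is therefore $1-1=0$, and its only nonzero off-diagonal entry is $-1$ in the column of its support $u$. For this I use the hypothesis that supports are non-pendant: then no two pendant vertices are adjacent, and each pendant vertex lies in exactly one bundle. Consequently, the rows of the vertices in a single bundle $L_u$ are identical.

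\emph{First congruence, inside each bundle.} Apply an orthogonal change of basis on the coordinates of $L_u$. Take as new basis vectors
\[
f_u=\lambda_u^{-1/2}\sum_{x\in L_u}e_x
\]
together with an orthonormal basis of the orthogonal complement of $f_u$ inside $\operatorname{span}\{e_x:x\in L_u\}$. A complement vector has zero coordinate sum over $L_u$. Since all leaf rows are equal, its row and column vanish, which produces the summand $0_{\lambda_u-1}$. The vector $f_u$ has zero diagonal entry and pairs only with $e_u$, with value $-\sqrt{\lambda_u}$. The pairings of $f_u$ with any other $f_{u'}$ and with every vertex of $D$ are zero.

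\emph{Second congruence, clearing each support.} Fix $u\in U$ and let $r=\sqrt{\lambda_u}$. For every basis vector $e_w$ with $w\ne u$ that pairs nontrivially with $e_u$, namely the adjacent supports and the adjacent vertices of $D$, replace $e_w$ by $e_w+\alpha f_u$ with $\alpha$ chosen so that the $(w,u)$ entry becomes zero. This is a legitimate congruence. Because $f_u$ has zero self-pairing and pairs only with $e_u$, the modification changes only the $(w,u)$ entry. In particular, the diagonal entry at $w$, the pairings of $w$ with $D$ and with other supports, and the whole block $(L(G)-I)[D,D]$ are untouched.

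Finally, replace $e_u$ by $e_u+\frac{c_u}{2r}f_u$, where $c_u=\deg(u)-1$. This kills the diagonal entry at $u$. Since $f_u$ pairs only with $e_u$, no other entry changes.

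Doing this for every $u\in U$ in turn is consistent. The vector $f_u$ is never modified, and each step preserves the zero pairings created by the earlier steps, because $f_u$ is orthogonal, in the form, to every vector except $e_u$.

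\emph{Conclusion.} The resulting matrix is the stated direct sum. It consists of the $2|U|\times2|U|$ block $\begin{pmatrix}0&-R\\-R&0\end{pmatrix}$ on the pairs $(f_u,e_u)$, the unchanged block $(L(G)-I)[D,D]$, and the zero block of size $\sum_u(\lambda_u-1)$. Each $2\times2$ pair $\begin{pmatrix}0&-r\\-r&0\end{pmatrix}$ has eigenvalues $\pm r$, so it contributes exactly one negative direction. Sylvester's law of inertia then gives \eqref{eq:bundle-negative}.

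The only delicate point is the bookkeeping in the second step: one must check that clearing the pairings of one support never reintroduces entries at another support or inside $D$. This follows because each $f_u$ is isotropic and pairs only with $e_u$.
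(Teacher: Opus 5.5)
Your proposal is correct and follows essentially the same route as the paper: average each bundle into the normalized vector $f_u$ plus $\lambda_u-1$ decoupled difference coordinates, then use the isotropic vectors $f_u$, which pair only with $e_u$, to shear away every support--support entry, every support--deep entry, and every support diagonal entry. The paper does this shear in a single block substitution $x=x'+\tfrac12R^{-1}Ay+R^{-1}Fz$, whereas you do it one support at a time; your check that later steps never reintroduce entries cleared earlier is sound.
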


\begin{proof}
In every bundle, replace the leaf coordinates by the normalized all-ones
coordinate and $\lambda_u-1$ orthogonal difference coordinates.  The
difference coordinates are decoupled zero modes.  Ordering the remaining
coordinates as bundle averages $x$, supports $y$, and deep vertices $z$
gives a quadratic form
\[
 -2x^{\trans}Ry+y^{\trans}Ay+2y^{\trans}Fz+z^{\trans}Qz,
\]
where $A$ includes all support--support terms and
$Q=(L(G)-I)[D,D]$.  The invertible shear
\[
 x=x'+\tfrac12R^{-1}Ay+R^{-1}Fz
\]
cancels both $y^{\trans}Ay$ and $2y^{\trans}Fz$.  The remaining form is
$-2{x'}^{\trans}Ry+z^{\trans}Qz$.  Each $2\times2$ block determined by a
diagonal entry of $R$ has one positive and one negative eigenvalue, proving
the congruence and \eqref{eq:bundle-negative}.
\end{proof}

The repeated-leaf contribution to the Laplacian eigenvalue one belongs to
the Faria residual-matrix line
\cite{Faria1985,AndradeCardosoPastenRojo2015}; a recent reduction formula is
given in \cite{TianWong2026}.  The congruence above is recorded because the
present argument needs the full positive/negative/zero inertia, not only the
multiplicity of one.  It also permits arbitrary edges inside the support set.

For the full leaf partition of a tree, the residual block is especially
simple:
\begin{equation}
 (L(T)-I)[D,D]
 =\bigoplus_C Q_C,
 \qquad
 Q_C=L(C)+\diag(b_v-1:v\in C).                       \label{eq:deep-residual}
\end{equation}
Therefore
\begin{equation}
 \mu(T)=p+\sum_C n_-(Q_C).                            \label{eq:mu-decomposition}
\end{equation}

\subsection{Exact domination on the deep forest}

For a component $C$, define
\[
 A_C=\{v\in C:b_v>0\},\qquad R_C=V(C)\setminus A_C,
\]
and
\[
 \delta_C=\min\{|S|:S\subseteq V(C),\ R_C\subseteq N_C[S]\}.
\]

\begin{lemma}[Deep-component domination decomposition]
\label{lem:domination-decomposition}
Every tree of order at least three satisfies
\begin{equation}
 \gamma(T)=p+\sum_C\delta_C.                          \label{eq:gamma-decomposition}
\end{equation}
\end{lemma}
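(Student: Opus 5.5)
The plan is to prove the two inequalities separately. The upper bound comes from an explicit dominating set. The lower bound comes from normalizing a minimum dominating set so that it contains $P$ and avoids $L$, and then reading off its trace on each deep component.

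First I record the structural facts that the order hypothesis provides. Since $|T|\ge3$, no two leaves are adjacent. Hence $P=N_T(L)$ is disjoint from $L$, and every neighbour of a leaf lies in $P$. Next, let $v\in R_C$. Then $v$ has no neighbour in $P$ because $b_v=0$. It has no neighbour in $L$ either, because a vertex adjacent to a leaf belongs to $P$, whereas $v\in D$. So $N_T(v)\subseteq D$. Since $C$ is the component of $T[D]$ containing $v$, this gives $N_T[v]=N_C[v]$ for every $v\in R_C$. This locality of the closed neighbourhoods of $R_C$ is what makes the decomposition exact.

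For the upper bound, I choose for each component $C$ a set $S_C\subseteq V(C)$ of size $\delta_C$ with $R_C\subseteq N_C[S_C]$, and put $S=P\cup\bigcup_C S_C$. This set is dominating. Every leaf and every vertex of $P$ lies in $N_T[P]$. Every deep vertex of $A_C$ has a neighbour in $P$ by definition. Every vertex of $R_C$ is dominated by $S_C$. The sets $P$ and $V(C)$ are pairwise disjoint, so $\gamma(T)\le|S|=p+\sum_C\delta_C$.

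For the lower bound, I start from a minimum dominating set $S$. For each leaf $x\in S$ with support $u$, I replace $x$ by $u$. Every vertex dominated by $x$, namely $x$ and $u$, is also dominated by $u$, so the new set is still dominating and is no larger. Iterating, I may assume $S\cap L=\emptyset$. Then each leaf $x$ is dominated through its unique neighbour, which must lie in $S$, and so $P\subseteq S$. The resulting set is still minimum. This normalization is the only step needing care, since several leaves may share a support and collapse onto a single vertex. Such a collapse only decreases $|S|$, which is harmless, so no issue arises.

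Now put $S'_C=S\cap V(C)$. For each $v\in R_C$, $v$ is dominated by $S$, and $N_T[v]=N_C[v]$ by the first step. So $v$ is dominated by $S'_C$ inside $C$, that is, $R_C\subseteq N_C[S'_C]$, and therefore $|S'_C|\ge\delta_C$. Using again that $P$ and the sets $V(C)$ are disjoint, we get $\gamma(T)=|S|\ge p+\sum_C|S'_C|\ge p+\sum_C\delta_C$. Combined with the upper bound, this proves \eqref{eq:gamma-decomposition}.
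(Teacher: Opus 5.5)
Your proof is correct and follows the paper's own argument. You normalize a minimum dominating set so that it contains $P$ and no leaf, and then use that every $v\in R_C$ satisfies $N_T[v]=N_C[v]$, which splits the remaining choices exactly into covers of each $R_C$. Your version is simply more explicit than the paper's, separating the two inequalities and stating the neighbourhood locality outright.
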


\begin{proof}
Start with a minimum dominating set.  If a penultimate vertex $u$ is absent,
all its leaf neighbours must be selected; replace them by $u$.  Once $u$ is
selected, every selected leaf adjacent to it is redundant.  Repeating this
normalizes a minimum dominating set so that it contains all of $P$ and no
leaf.

The chosen vertices in $P$ already dominate $A_C$.  A vertex of $R_C$ has
no neighbour in $P$, and vertices in another deep component cannot dominate
it.  Thus the remaining choices are exactly independent closed-neighbourhood
covers of the sets $R_C$.  Taking a minimum cover in each component proves
both directions of \eqref{eq:gamma-decomposition}.
\end{proof}

Meir and Moon proved the full-target domination--packing identity for trees
\cite{MeirMoon1975}.  The partial-target form below follows by row restriction
in Farber's totally balanced closed-neighbourhood framework
\cite{Farber1984}; we include a direct tree proof for completeness.

\begin{lemma}[Partial domination--packing duality]
\label{lem:partial-packing}
For a tree $C$ and $R\subseteq V(C)$,
\[
 \min\{|S|:R\subseteq N_C[S]\}
 =\max\{|Z|:Z\subseteq R,\ d_C(z,z')\ge3
                  \text{ for }z\ne z'\}.
\]
\end{lemma}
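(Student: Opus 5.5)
The plan is to prove the two inequalities separately.

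The easy direction is $\min\ge\max$. If $S$ covers $R$ and $Z\subseteq R$ is a distance-three packing, then each $z\in Z$ has some $s\in S$ with $z\in N_C[s]$. Two distinct packing vertices cannot lie in the same closed neighbourhood, since that would put them at distance at most two. Hence $z\mapsto s$ is injective, and $|Z|\le|S|$.

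For the reverse inequality, I will argue by induction on $|V(C)|$, producing a cover and a packing of equal size. The base cases are $R=\emptyset$, where both sides are $0$, and $|V(C)|=1$.

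In the inductive step, assume $R\neq\emptyset$. Root $C$ at an arbitrary vertex and choose $z\in R$ of maximum depth. Let $w$ be the parent of $z$, or $w=z$ if $z$ is the root. The key structural fact is that every vertex of $R$ at distance at most two from $z$ lies in $N_C[w]$. Indeed, such a vertex is either $z$ itself, a sibling of $z$, the vertex $w$, or the grandparent of $z$. It cannot be a child or grandchild of $z$ or of a sibling, because that would contradict the maximum depth of $z$.

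Now remove the covered targets: set $R'=R\setminus N_C[w]$, keeping the same tree $C$. Since $z\in R\setminus R'$, the quantity $|R|$ strictly decreases, so the induction should run on $|R|$ rather than on $|V(C)|$. The hypothesis then gives a cover $S'$ of $R'$ and a packing $Z'\subseteq R'$ with $|S'|=|Z'|$. Put $S=S'\cup\{w\}$, which covers $R$.

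The obstacle is that $Z'\cup\{z\}$ must still be a packing. Any $z'\in Z'$ lies in $R'$, hence outside $N_C[w]$. By the structural fact, every vertex of $R$ within distance two of $z$ lies in $N_C[w]$, so $d(z,z')\ge3$. This gives $|Z|=|Z'|+1=|S'|+1\ge|S|$, which closes the induction.

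The deepest-vertex case analysis is the only delicate point, and it is where I expect the work to lie. Vertices at distance two from $z$ through $w$'s other children are siblings of $z$, which lie in $N_C[w]$. Vertices reached through the grandparent are either the grandparent itself, adjacent to $w$, or at distance three from $z$. Vertices below $z$ are excluded by the choice of maximum depth. I would check each of these cases explicitly, including the case where $z$ is the root, so that $w=z$ and $R\subseteq N_C[z]$ trivially.
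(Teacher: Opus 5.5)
Your proposal is correct and is essentially the paper's argument: the paper runs the same greedy procedure (choose a deepest uncovered target, select its parent or the target itself at the root, delete the targets in that closed neighbourhood, record the target), and you package this iteration as an induction on $|R|$ with the tree held fixed. Your structural fact, that every target within distance two of a deepest target $z$ lies in $N_C[w]$, does the same job as the paper's case split into ancestor-side, same-depth, and descendant-side targets when it checks that the recorded targets form a packing.
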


\begin{proof}
A closed neighbourhood contains at most one member of a distance-three
packing, giving the easy inequality.  For the reverse inequality, root $C$.
Repeatedly choose a deepest uncovered target $u$.  Select its parent, or
$u$ itself if it is the root, and delete every target in the selected
vertex's closed neighbourhood.  Record $u$.

The selected vertices cover $R$.  A selected parent cannot be selected again:
its entire closed neighbourhood of targets was deleted in its first
iteration.  Two recorded targets cannot be at distance
at most two: an ancestor-side or same-depth target would have been deleted,
whereas a descendant-side target would have been deeper than the target
chosen first.  Hence the recorded targets form a packing, and the procedure
constructs a cover and a packing of the same cardinality.
\end{proof}

\begin{corollary}
\label{cor:three-delta}
If every vertex of $R$ has degree at least two in $C$, then
\[
 3\min\{|S|:R\subseteq N_C[S]\}\le |V(C)|.
\]
\end{corollary}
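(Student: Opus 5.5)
By \Cref{lem:partial-packing}, it suffices to bound the packing number instead of the covering number. Let $Z\subseteq R$ be a maximum distance-three packing in $C$, so that $|Z|=\min\{|S|:R\subseteq N_C[S]\}$. The claim then becomes $3|Z|\le|V(C)|$.

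The key step is to show that the closed neighbourhoods $N_C[z]$, $z\in Z$, are pairwise disjoint and each has size at least three. Disjointness follows from the packing condition: if $w\in N_C[z]\cap N_C[z']$ with $z\ne z'$, then $d_C(z,z')\le2$, a contradiction. For the size bound, every $z\in Z$ lies in $R$, so by hypothesis $\deg_C(z)\ge2$, and hence $|N_C[z]|\ge3$.

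Summing over $Z$ gives
\[
3|Z|\le\sum_{z\in Z}|N_C[z]|=\Bigl|\bigcup_{z\in Z}N_C[z]\Bigr|\le|V(C)|,
\]
which is the claim.

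There is no real obstacle here. The only point needing care is that the degree hypothesis is applied to the packing vertices, which is legitimate because the packing in \Cref{lem:partial-packing} is taken inside $R$. The degenerate case $R=\emptyset$ is trivial, since then both sides of the packing identity vanish.
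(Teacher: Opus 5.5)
Your proposal is correct and follows the paper's own proof: take a maximum packing $Z\subseteq R$ from \Cref{lem:partial-packing}, note that the sets $N_C[z]$ are pairwise disjoint and each has at least three vertices, and sum. You also correctly point out that the degree hypothesis can be applied to the packed vertices precisely because the packing lies inside $R$.
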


\begin{proof}
Take a maximum packing from \Cref{lem:partial-packing}.  Its closed
neighbourhoods are pairwise disjoint and each contains at least three
vertices.
\end{proof}

\section{The global order law and low-slack rigidity}
\label{sec:order-law}

We now derive the order normalization that drives the rest of the paper.  A
tree is \emph{reduced} if it has no edge whose two endpoints both have degree
two.

\begin{lemma}[Leaf-sensitive domination]
\label{lem:leaf-sensitive}
Every nontrivial tree $H$ satisfies
\[
 3\gamma(H)\le |V(H)|+\ell(H).
\]
\end{lemma}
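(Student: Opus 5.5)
We prove $3\gamma(H)\le |V(H)|+\ell(H)$ by induction on $|V(H)|$, with small trees checked directly. For $H=P_2$ we have $3\le 2+2$. For a star $K_{1,m}$ with $m\ge2$ we have $3\le (m+1)+m$. So we may assume $H$ has diameter at least three and order at least four.

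For the inductive step, root $H$ at one end of a longest path and let $v$ be a deepest non-leaf vertex. Then every child of $v$ is a leaf, and $v$ has $c\ge1$ leaf children. Let $w$ be the parent of $v$. We split into cases according to $w$.

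\textbf{Case 1: $w$ has a leaf neighbour or another child that is a support.} Delete $v$ together with its leaf children to obtain $H'=H-(\{v\}\cup\text{leaves of }v)$, and put $v$ into a dominating set. We need $H'$ to be a nontrivial tree. The loss in order is $c+1\ge2$. The leaf count changes by $-c$, plus $1$ if $w$ becomes a leaf in $H'$. Then
\[
\gamma(H)\le\gamma(H')+1,\qquad |H'|+\ell(H')\le |H|+\ell(H)-(2c+1)+[w\text{ becomes a leaf}].
\]
If $c\ge2$, or if $w$ does not become a leaf, the drop is at least $3$. The only tight situation is $c=1$ with $w$ becoming a leaf in $H'$, i.e.\ $\deg_H(w)=2$. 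Under the hypotheses of this case, $w$ retains degree at least two in $H'$, so the drop is at least $3$ and the induction closes.

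\textbf{Case 2: $\deg_H(w)=2$.} Let $x$ be the other neighbour of $w$. Delete $v$, its leaves and $w$; this removes $c+2$ vertices, and $v$ dominates $w$. The leaf count drops by $c$ and may rise by one if $x$ becomes a leaf. The total drop is $2c+2-[x\text{ becomes a leaf}]\ge 3$. If the remainder is a single vertex $x$, then $H$ is a star-like spider with a small domination number, which we check directly.

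\textbf{Case 3: all children of $w$ are supports like $v$ and $w$ has higher degree.} Delete the whole subtree at $w$, which consists of $k\ge2$ supports and their leaves, and choose all $k$ supports. Then $w$ is dominated, and we lose at least $2k+1$ vertices and at least $k$ leaves, possibly gaining one leaf at the parent. The drop is at least $3k+1-1\ge3k$, matching the $k$ added dominators.

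The main obstacle is bookkeeping when a removal turns the next vertex into a leaf, and the degenerate endings where the remainder is $K_1$. For the latter I would pass to the base cases $P_2$ and stars rather than the trivial tree.
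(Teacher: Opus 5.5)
Your argument is correct and is essentially the paper's: both induct by deleting a farthest support with its leaf children (and, when necessary, its degree-two parent) at the cost of one dominator; the paper splits first on the number of leaf children and you split on $\deg_H(w)$. One small repair: as worded, the ``leaf neighbour'' clause of your Case~1 also admits $H=P_4$, where that neighbour is $w$'s parent and $w$ does become a leaf (the induction then only gives $3\gamma(P_4)\le 7$, not $6$), so take Case~1 to be $\deg_H(w)\ge3$ and Case~2 to be $\deg_H(w)=2$, which sends $P_4$ to your direct check; since with longest-path rooting every non-leaf child of $w$ is a support, these two cases are exhaustive and Case~3 is redundant.
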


\begin{proof}
Induct on $|V(H)|$; orders at most four are immediate.  Root $H$ at a leaf
and choose a support vertex $u$ farthest from the root.  All children of $u$
are leaves.  Let $X$ be that child set, put $k=|X|$, and let $v$ be the
parent of $u$.  If $v$ is the root leaf, then $H$ is a star.

If $k\ge2$, delete $X\cup\{u\}$.  The remaining nontrivial tree $H'$ has
order $|H|-k-1$ and at most $\ell(H)-k+1$ leaves.  A dominating set of $H'$,
together with $u$, dominates $H$, so induction gives
\[
 3\gamma(H)\le |H|+\ell(H)-2k+3\le |H|+\ell(H).
\]

Suppose $k=1$, and call the leaf child $x$.  If $\deg_H(v)\ge3$, delete
$x,u$; the order falls by two and the leaf count by one, and adjoining $u$
to a dominating set completes the estimate.  If $\deg_H(v)=2$, either
$H=P_4$ or deleting the path $x-u-v$ leaves a nontrivial tree with order
$|H|-3$ and at most $\ell(H)$ leaves.  Again adjoin $u$ and apply induction.
\end{proof}

The stronger independent-domination form is due to Favaron
\cite{Favaron1992}; the elementary form above is included because its leaf
term is exactly what is needed next.

\begin{lemma}[Deep-vertex domination]
\label{lem:deep-domination}
Every tree $T$ of order at least three satisfies
\[
 3(\gamma(T)-p)\le d.
\]
\end{lemma}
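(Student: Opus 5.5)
The plan is to reduce the inequality to a per-component statement using the two exact decompositions of \Cref{sec:preliminaries}, and then apply \Cref{cor:three-delta} inside each deep component. By \Cref{lem:domination-decomposition}, $\gamma(T)-p=\sum_C\delta_C$. The components $C$ of $T[D]$ partition $D$, so $d=\sum_C|V(C)|$. It therefore suffices to prove
\[
 3\delta_C\le |V(C)|
\]
for every component $C$ and then sum. If $D=\emptyset$, both sides of the lemma are zero (there are no components, so $\gamma(T)=p$), and there is nothing to prove.

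To apply \Cref{cor:three-delta} with $R=R_C$, I must check that every $v\in R_C$ has $\deg_C(v)\ge2$.

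\begin{itemize}
\item Since $|T|\ge3$, no leaf is adjacent to another leaf, so $L$, $P$, $D$ really partition $V(T)$.
\item A vertex $v\in R_C$ is deep, so it is not a leaf and $\deg_T(v)\ge2$.
\item None of its neighbours is a leaf, since otherwise $v$ would lie in $P$.
\item None of its neighbours lies in $P$, since $b_v=0$ by the definition of $R_C$.
\end{itemize}

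Hence every neighbour of $v$ is deep. Being adjacent to $v$, each such neighbour lies in the same component $C$. This gives $\deg_C(v)=\deg_T(v)\ge2$.

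\Cref{cor:three-delta} then yields $3\delta_C\le|V(C)|$; if $R_C=\emptyset$ the bound is trivial because $\delta_C=0$. Summing over $C$ gives $3(\gamma(T)-p)=3\sum_C\delta_C\le\sum_C|V(C)|=d$. The only point needing care is the degree verification above, in particular that all neighbours of a target vertex stay inside its own deep component. This holds because the deep components are the connected pieces of $T[D]$ and a target vertex has no leaf or penultimate neighbour, so I do not expect a genuine obstacle.
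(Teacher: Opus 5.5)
Your proof is correct, but it takes a different route from the paper's.

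The paper argues constructively. It takes all of $P$, and in each deep component $C$ with $|C|\ge3$ it deletes the leaves of $C$, which $P$ already dominates. It then dominates the remaining tree $K$ using the leaf-sensitive inequality $3\gamma(K)\le|K|+\ell(K)$ of \Cref{lem:leaf-sensitive}, and uses $\ell(K)\le\ell(C)$ to charge at most $|C|/3$ per component.

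You instead combine \Cref{lem:domination-decomposition} with the packing bound of \Cref{cor:three-delta}. The one hypothesis that needs checking, $\deg_C(v)\ge2$ on $R_C$, you verify correctly. Of the decomposition you only need the easy inequality $\gamma(T)\le p+\sum_C\delta_C$. The substantive input is the nontrivial (greedy) direction of the duality in \Cref{lem:partial-packing}.

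The trade-off is this. The paper's argument avoids any min--max theorem and relies only on an elementary induction. However, that induction (\Cref{lem:leaf-sensitive}) is an auxiliary result used nowhere else in the paper. Your argument makes it superfluous. It also states the bound directly in the componentwise form $|C|-3\delta_C\ge0$, which is the same packing bookkeeping the paper reuses later: in the equality classification, in \Cref{thm:local-compactness}, and in the identity $b=\sum_C z_C$. So the lemma becomes a special case of that machinery rather than a separate fact.
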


\begin{proof}
Select all penultimate vertices.  In a component $C$ of $T[D]$, every leaf
of $C$ is adjacent in $T$ to a penultimate vertex and is already dominated.
If $|C|\le2$, no additional vertex is needed.  Otherwise delete the leaves
of $C$ and call the remaining tree $K$.  If $K$ is a singleton, select it.
Otherwise every leaf of $K$ has a distinct neighbour among the deleted
leaves of $C$, so $\ell(K)\le\ell(C)$.  By
\Cref{lem:leaf-sensitive}, the non-leaves of $C$ can be dominated with at
most
\[
 \frac{|K|+\ell(K)}3\le\frac{|K|+\ell(C)}3=\frac{|C|}3
\]
vertices.  Summing over the deep components gives a dominating set of size
at most $p+d/3$.
\end{proof}

Equation~\eqref{eq:mu-decomposition} already gives $\mu(T)\ge p$
self-containedly.  We use the following reduced-tree inequality from
\cite[Lemma~12]{RajendraprasadSankaranarayanan2025}:
\begin{equation}
 \gamma(T)\le\mu(T)+\frac{p-1}{3};                   \label{eq:cited-reduced-bound}
\end{equation}

\paragraph{Clean-contraction interface.}
If $T'$ is obtained by contracting a clean three-edge path $a-b-c-d$ with
$\deg_T(b)=\deg_T(c)=2$, then
\begin{equation}
 |T|=|T'|+3,\qquad
 \mu(T)=\mu(T')+1,\qquad
 \gamma(T)\le\gamma(T')+1.                          \label{eq:clean-contraction-input}
\end{equation}

\begin{proof}[Proof of the clean-contraction interface]
Only the spectral identity requires explanation.  Use the exact
leaf-to-root congruence recurrence for $L(T)-I$,
\[
 f(v)=\deg_T(v)-1-\sum_{w\text{ a child of }v}\frac1{f(w)},
\]
interpreted in the extended real line with
\[
 \frac10=+\infty,
 \qquad
 \frac1{\pm\infty}=0.
\]
If a vertex has a zero child, record the parent value as $-\infty$ and
change the child's temporary zero sign to a positive sign.  This is exactly
the usual nonsingular $2\times2$ zero-pivot congruence: the reciprocal of
the resulting infinite parent is zero, so the edge above it contributes
nothing.  Thus this bookkeeping preserves the inertia delivered by the
standard finite algorithm, including when a zero pivot occurs below the
clean path.

Root on the $a$ side of the clean path and orient it as $a,b,c,d$.  Away
from this path the recurrences for $T$ and $T'$ agree.  Put $x=f_T(d)$;
the extended algorithm can produce $x\in\mathbb R\cup\{-\infty\}$.  On the
path,
\[
 f_T(c)=1-\frac1x,\qquad
 f_T(b)=1-\frac1{f_T(c)},\qquad
 1-\frac1{f_T(b)}=x.
\]
The last identity makes the pivot at $a$ equal to the pivot at the contracted
vertex of $T'$.  After the preceding zero-pivot bookkeeping, exactly one of
$d,c,b$ contributes an additional negative sign: it is $d$ when $x<0$
(including $x=-\infty$), $c$ when $0\le x<1$, and $b$ when $x\ge1$.
At $x=0$ or $x=1$ this statement is precisely the $2\times2$ zero-pivot
case just described.  Sylvester's law therefore gives
$\mu(T)=\mu(T')+1$.

For domination, let $S'$ be a minimum dominating set of $T'$ and let $e$ be
the contracted vertex.  If $e\in S'$, replace it by $a,d$.  If $e\notin S'$,
an external neighbour of $a$ or $d$ dominates $e$; add whichever of $b,c$
is at distance three from that neighbour.  In both cases the resulting set
dominates $T$ and has size at most $|S'|+1$.
\end{proof}

Whenever we call $T'$ a \emph{clean quotient} of $T$, we mean precisely a
tree obtained by one such contraction; no uniqueness of the chosen path is
part of the terminology.
Thus \eqref{eq:cited-reduced-bound} is the only imported mathematical
inequality used in the global-law and classification proofs.  Its cited
proof constructs a dominating set in the same bottom-up inertia scan: the
absence of adjacent degree-two vertices supplies the local domination check,
and the weight balance leaves the term $(p-1)/3$.  No equality or
classification statement is imported from that argument.

\begin{theorem}[Sharp global order law]
\label{thm:global-order-law}
Every tree $T$ of order $n\ge2$ satisfies
\begin{equation}
 9\bigl(7\gamma(T)-9\mu(T)\bigr)\le n-20.           \label{eq:global-order-law}
\end{equation}
\end{theorem}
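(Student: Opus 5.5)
The plan is to prove \eqref{eq:global-order-law} first for reduced trees, by a fixed linear combination of the available inequalities, and then to remove adjacent degree-two vertices by induction on $n$ through the clean-contraction interface \eqref{eq:clean-contraction-input}.

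\emph{Reduced trees.}  The base case $n=2$ is $P_2$, where $\gamma=\mu=1$, so $63-81=-18\le 2-20$.  Now let $T$ be reduced of order $n\ge3$, with $n=\ell+p+d$.  I will use four inputs:
\begin{itemize}
  \item $\mu\ge p$, from \eqref{eq:mu-decomposition};
  \item $\ell\ge p$, since every penultimate vertex has a leaf;
  \item $3(\gamma-p)\le d$, from \Cref{lem:deep-domination};
  \item $\gamma-\mu\le (p-1)/3$, from \eqref{eq:cited-reduced-bound}.
\end{itemize}
Put $x=\gamma-\mu$.  From $\mu\ge p$ we get $x\le\gamma-p\le d/3$.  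Then
\[
 63\gamma-81\mu=63x-18\mu\le 63x-18p .
\]
Bound $63x$ by a convex combination of the two upper bounds on $x$, using weights $20/21$ and $1/21$:
\[
 63x=60x+3x\le 20(p-1)+d .
\]
The weight $1/21$ on the deep bound is forced: the coefficient of $d$ on the right-hand side $n-20$ is $1$.  This gives
\[
 63\gamma-81\mu\le 2p+d-20\le \ell+p+d-20=n-20 .
\]
The slack in these steps is exactly the nonnegative terms later collected in $\Phi$, namely $\ell-p$, the deep-domination slack, and the spectral and reduced-bound slacks.

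\emph{Non-reduced trees.}  Suppose $T$ has adjacent degree-two vertices $b,c$.  Their other neighbours $a,d$ exist, so $a-b-c-d$ is a clean three-edge path.
\begin{itemize}
  \item If $n\ge5$, contracting the path gives a tree $T'$ of order $n-3\ge2$.  By \eqref{eq:clean-contraction-input} and induction,
  \[
   63\gamma(T)-81\mu(T)\le 63\gamma(T')-81\mu(T')-18\le (n-3-20)-18<n-20 .
  \]
  \item If $n=4$, the only non-reduced tree is $P_4$.  Its Laplacian eigenvalues are $2-2\cos(k\pi/4)$, so $\mu=2$, and $\gamma=2$.  Then $126-162=-36\le-16$.
  \item Orders $n\le3$ give no non-reduced trees.
\end{itemize}
This completes the induction.

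I expect the main obstacle to be finding the right weighting, $20/21$ against $1/21$, of the cited reduced bound against the deep-domination bound so that the coefficients match $n-20$ exactly.  A secondary point is checking that the contraction never produces a tree too small for the induction hypothesis; this is why $P_4$ is handled separately.
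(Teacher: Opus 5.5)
Your proof is correct and follows the paper's argument: your weighted combination of the four inequalities gives exactly the paper's five-term slack $(\ell-p)+(d-3h)+20s+21t+60r$, where $h=\gamma-p$ and $t=\mu-p$. The only difference is that you leave $20s+60r$ merged as $20(p-1)-60(\gamma-\mu)$, so you never need the floor $\lfloor(p-1)/3\rfloor$. The paper splits out the separate integer terms because they drive its later low-slack rigidity results. Your non-reduced case is the same clean-contraction induction as the paper's, with $P_4$ as the base case because contracting it would leave a single vertex.
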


\begin{proof}
First let $T$ be reduced and $n\ge3$.  Put
\[
 h=\gamma-p,\qquad t=\mu-p,\qquad
 p-1=3q+s\quad(s\in\{0,1,2\}),
\]
and
\[
 r=q-(\gamma-\mu)=q-h+t.
\]
These coordinates isolate five different sources of slack.  Their
nonnegativity follows independently:
\begin{center}
\small
\begin{tabularx}{.96\textwidth}{@{}lX@{}}
\toprule
Term & Reason for nonnegativity \\
\midrule
$\ell-p$ & Every penultimate vertex has a leaf neighbour, and distinct
penultimate vertices have distinct leaf neighbours.\\
$d-3h$ & This is the unused part of the deep-vertex domination bound
$3(\gamma-p)\le d$.\\
$s$ & This is the residue in $p-1=3q+s$, with $s\in\{0,1,2\}$.\\
$t$ & The general spectral support bound $\mu\ge p$ gives $t=\mu-p\ge0$.\\
$r$ & The reduced-tree inequality gives the integral bound
$\gamma-\mu\le\lfloor(p-1)/3\rfloor=q$.\\
\bottomrule
\end{tabularx}
\end{center}

The coefficients in the decomposition can now be recomputed without any
inequality.  Starting from the five proposed defects and substituting
$r=q-h+t$ gives
\begin{align*}
 &(\ell-p)+(d-3h)+20s+21t+60r\\
 &\qquad=\ell-p+d-3h+20s+21t+60(q-h+t)\\
 &\qquad=\ell+d-p+60q+20s-63h+81t.
\end{align*}
Since $p=3q+s+1$, the coefficient block satisfies
\[
 -p+60q+20s=19p-20.
\]
Using $n=\ell+p+d$, $\gamma=p+h$, and $\mu=p+t$ therefore yields the exact
five-term identity
\begin{equation}
 \Phi(T):=n-20-9(7\gamma-9\mu)
  = (\ell-p)+(d-3h)+20s+21t+60r.                    \label{eq:five-term-slack}
\end{equation}
Thus $\Phi(T)\ge0$ for every reduced tree.

For a general tree, induct on its order.  The complete base list is
\[
\begin{array}{c|rrrr}
T&\gamma&\mu&7\gamma-9\mu&\Phi\\ \hline
P_2&1&1&-2&0\\
P_3&1&1&-2&1\\
P_4&2&2&-4&20\\
K_{1,3}&1&1&-2&2.
\end{array}
\]
If the tree is not reduced, apply
\eqref{eq:clean-contraction-input}.  The definition of $\Phi$ gives
\begin{align*}
 7\gamma(T)-9\mu(T)
 &\le 7(\gamma(T')+1)-9(\mu(T')+1)\\
 &=7\gamma(T')-9\mu(T')-2,
\end{align*}
and hence
\[
 \Phi(T)\ge (|T'|+3)-20
 -9\bigl(7\gamma(T')-9\mu(T')-2\bigr)
 =\Phi(T')+21,
\]
and the induction hypothesis completes the proof.
\end{proof}

\subsection{Notation guide and why the five terms matter}

For reference, the main coordinates and the local quantities used below are
collected here:
\begin{center}
\small
\begin{tabularx}{.96\textwidth}{@{}lX@{}}
\toprule
Symbol & Meaning \\
\midrule
$n,\ell,p,d$ & Order and the sizes of the leaf, penultimate, and deep sets.\\
$\Phi,a,b$ & Global slack, leaf excess $a=\ell-p$, and deep-domination
surplus $b=d-3h$.\\
$b_v,B_C$ & Boundary incidences at $v\in D$ and their sum
$B_C=\sum_{v\in C}b_v$ on a deep component.\\
$A_C,R_C$ & Positive-boundary vertices of $C$ and the complementary
zero-boundary target set.\\
$h$ & Deep domination contribution $h=\gamma-p$.\\
$q,s$ & Quotient and residue in $p-1=3q+s$, $s\in\{0,1,2\}$.\\
$t$ & Spectral excess $t=\mu-p$.\\
$r$ & Reduced domination--spectrum slack $r=q-(\gamma-\mu)$.\\
$\delta_C,Q_C$ & Partial domination number of the zero-boundary targets in
$C$ and the residual symmetric block on $C$.\\
$z_C,\eta_C,\chi_C$ & Local packing surplus $|C|-3\delta_C$, residual
negative index $n_-(Q_C)$, and excess total degree above three.\\
$X_C,m_C$ & Total-degree-two vertices of $C$ and their number.\\
$\pi_C$ & Local pressure $B_C-3\delta_C-1$, introduced in
\Cref{sec:equality}.\\
$e_{PP}$ & Number of edges with both endpoints in the penultimate set.\\
$\mathcal O,\mathcal K_k$ & The ordinary weighted tile $(2,0,2)$ and the
finite set of ported kernels at slack $k$.\\
\bottomrule
\end{tabularx}
\end{center}

Identity \eqref{eq:five-term-slack} is more informative than the inequality
obtained by discarding its right-hand side.  The first two terms are
geometric: they record excess leaf bundles and unused deep-domination
capacity.  The remaining three are arithmetic or spectral: $s$ records the
congruence class of $p-1$, $t$ records negative spectral directions beyond
those forced by the leaf bundles, and $r$ records the integer gap in the
reduced domination--spectrum inequality.  None of these interpretations is
interchangeable with another; in particular, a finite value of $\Phi$ says
where the defect can occur, not merely how large the total defect is.

The first two positive coefficients also explain the later phase boundary.
A nonreduced tree pays at least $21$ units through contraction.  A reduced
tree with $t>0$ also pays at least $21$, while a nonzero residue $s$ first
costs $20$.  Thus the range $0\le\Phi\le19$ excludes all three mechanisms
before any structural argument is used.  At $\Phi=20$, exactly one of them,
the residue mechanism, becomes available; \Cref{sec:sharpness} shows that it
is realized by an actual subcubic family.

The coefficient separation in \eqref{eq:five-term-slack} is the source of
the exact stability window.

\begin{corollary}[Low-slack rigidity]
\label{cor:low-slack-rigidity}
Let $T$ be a tree of order at least five and suppose
$0\le\Phi(T)\le19$.  Then $T$ is reduced and
\[
 s=t=r=0.
\]
For a nonnegative integer $q$,
\begin{align}
 p=\mu&=3q+1,& \gamma&=4q+1,                         \label{eq:low-slack-parameters}\\
 n&=9q+2+\Phi(T),&7\gamma-9\mu&=q-2,                 \label{eq:low-slack-order}\\
 (\ell-p)+(d-3q)&=\Phi(T).                           \label{eq:leaf-deep-slack}
\end{align}
\end{corollary}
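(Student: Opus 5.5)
The plan is to use the contraction step from the proof of \Cref{thm:global-order-law} to show that $T$ is reduced, and then to read the coordinates $s,t,r$ directly off the five-term identity \eqref{eq:five-term-slack}.

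\emph{Reducedness.} Suppose $T$ is not reduced. Then it has an edge $bc$ with $\deg_T(b)=\deg_T(c)=2$. Let $a$ and $d$ be the other neighbours of $b$ and $c$. These exist and are distinct, because $T$ is a tree. So $a-b-c-d$ is a clean three-edge path, and contracting it gives a tree $T'$ of order $|T|-3\ge2$. The inequality chain in the proof of \Cref{thm:global-order-law} gives $\Phi(T)\ge\Phi(T')+21$, and \Cref{thm:global-order-law} applied to $T'$ gives $\Phi(T')\ge0$. Hence $\Phi(T)\ge21$, contradicting $\Phi(T)\le19$. Thus $T$ is reduced.

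\emph{Vanishing of $s,t,r$.} Since $T$ is reduced and $|T|\ge3$, identity \eqref{eq:five-term-slack} holds, and all five terms are nonnegative integers. If $s,t$ or $r$ were positive, the right-hand side would be at least $20$, $21$ or $60$ respectively, exceeding $19$. So $s=t=r=0$.

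\emph{Reading off the parameters.} From $s=0$ we get $p=3q+1$, where $q\ge0$ is the quotient. From $t=0$ we get $\mu=p=3q+1$. From $r=0$ we get $h=q+t=q$, so $\gamma=p+h=4q+1$. Then
\[
 7\gamma-9\mu=28q+7-27q-9=q-2,
\]
and the definition of $\Phi$ gives $n=\Phi+20+9(q-2)=9q+2+\Phi$. Finally, \eqref{eq:five-term-slack} with $s=t=r=0$ reduces to $(\ell-p)+(d-3h)=\Phi$, and substituting $h=q$ gives \eqref{eq:leaf-deep-slack}.

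The only point needing care is that the contraction bound $\Phi(T)\ge\Phi(T')+21$ requires $T'$ to be a legitimate tree covered by \Cref{thm:global-order-law}. This is where the hypothesis $|T|\ge5$ is used: it makes $|T'|\ge2$. It also keeps the exceptional case $P_4$ (order four, $\Phi=20$) out of the statement, and that case would not conflict with the bound anyway. Everything else is bookkeeping.
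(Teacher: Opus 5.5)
Your proof is correct and follows the paper's argument: you rule out non-reducedness with the contraction bound $\Phi(T)\ge\Phi(T')+21$, where $|T|\ge5$ guarantees $|T'|\ge2$, and then coefficient separation in \eqref{eq:five-term-slack} forces $s=t=r=0$. Your explicit derivation of $h=q$, $\gamma=4q+1$, and $n=9q+2+\Phi(T)$ simply writes out the paper's closing step, ``the remaining identities follow by substitution.''
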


\begin{proof}
If $T$ were not reduced, its contraction $T'$ would have order at least two,
and \Cref{thm:global-order-law} together with the contraction estimate would
give $\Phi(T)\ge21$.  Thus $T$ is reduced.  In
\eqref{eq:five-term-slack}, none of $20s,21t,60r$ can be positive, so
$s=t=r=0$.  Hence $p=3q+1$, $\mu=p$, and
$\gamma-\mu=q$.  The remaining identities follow by substitution.
\end{proof}

\begin{remark}
The threshold nineteen is the largest one obtainable by coefficient
separation alone: at slack twenty the algebraic term $20s$ can be nonzero.
\Cref{sec:sharpness} will show that this new algebraic possibility is
realized by actual subcubic trees.
\end{remark}

\section{Local pressure and matched-skeleton equality trees}
\label{sec:equality}

The equality structure is governed by a local boundary-surplus inequality.
For a tree $C$ and weights $b:V(C)\to\mathbb Z_{\ge0}$, put
\begin{align*}
 A&=\{v:b_v>0\},
 &B&=\sum_{v\in V(C)}b_v,\\
 Q&=L(C)+\diag(b_v-1:v\in V(C)).
\end{align*}

\begin{theorem}[Local boundary surplus]
\label{thm:local-surplus}
Let $C$ have order $s$ divisible by three.  Suppose every leaf of $C$ lies
in $A$,
\[
 \min\{|S|:V(C)\setminus A\subseteq N_C[S]\}=s/3,
\]
and $Q\succeq0$.  Then
\begin{equation}
 B-s\ge s/3.                                          \label{eq:local-surplus}
\end{equation}
Equality holds if and only if $C$ is obtained by subdividing every edge of a
specified perfect matching in a tree $F$, with weight two on the original
vertices of $F$ and weight zero on the subdivision vertices.
\end{theorem}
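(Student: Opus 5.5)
The plan is to use the packing/domination duality to cut $C$ into disjoint three-vertex stars. Then I test $Q$ against one global vector adapted to that partition. The same vector should give both the inequality and, through its kernel condition, the equality case.

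\emph{Step 1 (rigid partition).} Put $k=s/3$ and $R=V(C)\setminus A$. By \Cref{lem:partial-packing}, the hypothesis gives a distance-three packing $Z\subseteq R$ with $|Z|=k$. The closed neighbourhoods $N_C[z]$, $z\in Z$, are pairwise disjoint. Every vertex of $R$ is a nonleaf, because leaves lie in $A$, so each such neighbourhood has at least three vertices. As in \Cref{cor:three-delta}, $3k\le s$ then forces every $N_C[z]$ to have exactly three vertices and the neighbourhoods to partition $V(C)$. Thus each block is a path $u_z-z-w_z$ with $\deg_C(z)=2$ and $b_z=0$. All remaining edges of $C$ join end vertices of distinct blocks, and there are exactly $k-1$ of them, since contracting the blocks yields a tree.

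\emph{Step 2 (test vector).} Define $x$ by $x_z=2$ on every centre and $x_v=1$ on every end vertex. Write
\[
 x^{\trans}Qx=\sum_{vw\in E(C)}(x_v-x_w)^2+\sum_v(b_v-1)x_v^2 .
\]
Edges between blocks join two ends, so they contribute nothing. Each block contributes $2\cdot1+(b_{u_z}-1)+(b_{w_z}-1)+(-1)\cdot4=b_{u_z}+b_{w_z}-4$. Summing over blocks gives $x^{\trans}Qx=B-4k$, and $Q\succeq0$ then yields $B\ge4k=s+s/3$, which is \eqref{eq:local-surplus}. I will note that blockwise tests with an optimised centre value only give the weaker bound $B\ge2k+2$. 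The point of the choice is that it is a single global vector, constant on all ends, which kills the inter-block edges.

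\emph{Step 3 (equality).} If $B=4k$, then $x^{\trans}Qx=0$ with $Q\succeq0$, so $Qx=0$. The centre rows vanish automatically, since $2\cdot(2-1)-1-1=0$. For an end $u$ with $e(u)$ neighbours in other blocks, the diagonal entry is $\deg_C(u)+b_u-1=e(u)+b_u$. The $u$-row is therefore $e(u)+b_u-2-e(u)=b_u-2$, so $b_u=2$ on every end. Let $F$ have the ends as vertices, with a matching edge $u_zw_z$ for each block and the inter-block edges as its other edges. Then $F$ is a tree with a specified perfect matching, and $C$ is exactly its subdivision along that matching with the stated weights.

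For the converse, start from such a subdivision. Leaves of $C$ are original vertices of $F$, which have weight $2$ and so lie in $A$. The targets are the subdivision vertices, which are pairwise at distance at least three, so $\delta=k$. We have $B=2\cdot2k=4k$. For $Q\succeq0$, I decompose the quadratic form as a sum over blocks of the form with matrix $\begin{psmallmatrix}2&-1&0\\-1&1&-1\\0&-1&2\end{psmallmatrix}$, plus the squares $(x_v-x_w)^2$ of the inter-block edges. Here each end's diagonal $b-1+1=2$ splits as $1$ from its matching edge plus $b-1=1$, and the remaining degree is absorbed by the inter-block squares. This block matrix has leading minors $2,1,0$ and is positive semidefinite. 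The main obstacle is Step 2, namely finding a test vector whose cross-block terms cancel; once that is in place, everything else is bookkeeping.
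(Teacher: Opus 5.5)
Your proof is correct and is essentially the paper's argument. The paper eliminates the identity block on the packed centres and obtains the Schur complement $L(F)+\diag(b_v-2:v\in O)$, which it then tests with the all-ones vector; your vector (value $2$ on centres, $1$ on ends) is exactly the lift of that all-ones vector, so $x^{\trans}Qx=B-4k$ is the same computation, your row condition $b_u=2$ is the paper's kernel condition, and your explicit block-plus-edge-squares check of $Q\succeq0$ in the converse simply spells out the paper's ``reverse the suppression'' remark.
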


\begin{proof}
By \Cref{lem:partial-packing}, the zero-weight set contains a
distance-three packing $Z$ of size $s/3$.  No member of $Z$ is a leaf, and
the closed neighbourhoods $N_C[z]$ are pairwise disjoint.  Hence
\[
 s\ge\sum_{z\in Z}|N_C[z]|\ge3|Z|=s.
\]
Equality holds throughout: every $z$ has degree two, and the closed
neighbourhoods partition $V(C)$.  Put $O=V(C)\setminus Z$.  Every vertex of
$O$ has exactly one neighbour in $Z$, and $|O|=2s/3$.

Order $Q$ by $Z,O$.  The $Z$ principal block is the identity.  Suppressing
the vertices of $Z$ produces a tree $F$ on $O$; the suppressed two-edge
paths form a perfect matching of $F$.  If $R$ is the $Z$-by-$O$ incidence
matrix, then every row of $R$ has two ones and every column has one, and
\[
 Q=\begin{pmatrix}
 I&-R\\
 -R^{\mathsf T}&L(C)[O,O]+\diag(b_v-1:v\in O)
 \end{pmatrix}.
\]
Thus the Schur complement of the identity block is
\[
 L(C)[O,O]+\diag(b_v-1:v\in O)-R^{\mathsf T}R
 =L(F)+\diag(b_v-2:v\in O).
\]
It is positive semidefinite, so its all-ones quadratic form gives
\[
 0\le B-2|O|=B-4s/3,
\]
which is \eqref{eq:local-surplus}.

If equality holds, the all-ones vector belongs to the kernel of the Schur
complement.  Its row sums are $b_v-2$, so every vertex of $O$ has weight two
and every vertex of $Z$ has weight zero.  The converse follows by reversing
the suppression: elimination leaves $L(F)$, and the subdivision vertices
form the required packing.
\end{proof}

We now introduce the canonical equality objects.

\begin{definition}[Matched-skeleton lift]
\label{def:matched-lift}
Let $H$ be a subcubic tree on $2q$ vertices with perfect matching $M$.
Construct $W(H)$ by the following rules:
\begin{enumerate}[label=(\roman*)]
  \item subdivide every edge $e$ once, with subdivider $c_e$;
  \item attach one leaf to $c_e$ when $e\notin M$;
  \item at every original vertex $v$, attach $3-\deg_H(v)$ new pendant
        paths of length two.
\end{enumerate}
The support on one of the paths in (iii) is called an \emph{arm support}.
\end{definition}

Subcubic trees with a perfect matching are the classical molecular graphs of
acyclic polyenes \cite{Wang2016Estrada}.  What is specific here is the lift
$W(H)$, its domination--Laplacian equality, and the canonical inverse.

\begin{figure}[t]
\centering
\resizebox{\textwidth}{!}{\PtwoLiftGrammarGraphic}
\caption{The local grammar of the matched-skeleton lift.  Black circles are
the original vertices of $H$, black squares are leaves, and white circles are
subdividers or arm supports.  A matching subdivider receives no leaf, a nonmatching subdivider
receives one leaf, and every unused degree slot becomes a length-two arm.}
\label{fig:lift-grammar}
\end{figure}

A tree has at most one perfect matching, because the symmetric difference of
two perfect matchings would contain an alternating cycle.  Thus the matching
in \Cref{def:matched-lift} is determined by $H$.

\begin{proposition}[Parameters of the lift]
\label{prop:lift-parameters}
For every $H$ in \Cref{def:matched-lift}, the tree $W(H)$ is subcubic and
\begin{align}
 |V(W(H))|&=9q+2,& \gamma(W(H))&=4q+1,               \label{eq:lift-order-gamma}\\
 \In(L(W(H))-I)&=(5q+1,3q+1,q),& \mu(W(H))&=3q+1.    \label{eq:lift-inertia}
\end{align}
In particular, $\Phi(W(H))=0$.
\end{proposition}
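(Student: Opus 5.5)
Because every quantity in the statement can be read off the vertex partition of $W(H)$, I will first fix that partition and then invoke the two exact decompositions.

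\textbf{Order and subcubicity.} $H$ has $2q$ vertices, $2q-1$ edges, $q$ of them in $M$, and degree sum $2(2q-1)$. Hence the number of arms is $\sum_v(3-\deg_H v)=6q-2(2q-1)=2q+2$. The vertex count is
\[
2q+(2q-1)+(q-1)+2(2q+2)=9q+2,
\]
counting original vertices, subdividers, nonmatching leaves, and arms. Every original vertex ends with degree exactly three. Subdividers have degree two or three, and arm vertices have degree at most two, so $W(H)$ is subcubic.

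\textbf{Leaf/penultimate/deep partition.} The leaves are the $q-1$ nonmatching leaves and the $2q+2$ arm ends, so $\ell=3q+1$. The penultimate set is the $q-1$ nonmatching subdividers together with the $2q+2$ arm supports, so $p=3q+1=\ell$. The deep set is the $2q$ original vertices together with the $q$ matching subdividers, so $d=3q$.

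The deep components are therefore the weighted paths $u-c_e-v$ for $e=uv\in M$. In each, $b_u=b_v=2$, because every original vertex has exactly three neighbours, namely $c_e$ and two penultimate vertices, and $b_{c_e}=0$. Thus $Q_C$ is the $3\times3$ matrix $L(P_3)+\diag(1,-1,1)$, which has rows $(2,-1,0)$, $(-1,1,-1)$, $(0,-1,2)$.

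\textbf{Inertia.} Taking the Schur complement at the middle pivot $1$ gives $\begin{pmatrix}1&-1\\-1&1\end{pmatrix}$. So $\In(Q_C)=(2,0,1)$, which matches the equality case of \Cref{thm:local-surplus}. \Cref{lem:leaf-bundle} then contributes $p$ positive and $p$ negative directions from the support pairs, $\ell-p=0$ zero modes, and $q$ copies of $\In(Q_C)$. The total is
\[
(3q+1+2q,\ 3q+1,\ q),
\]
which is \eqref{eq:lift-inertia}. Then \eqref{eq:mu-decomposition} gives $\mu=3q+1$.

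\textbf{Domination.} By \Cref{lem:domination-decomposition}, $\gamma=p+\sum_C\delta_C$. In each component $R_C=\{c_e\}$, so $\delta_C=1$. Hence $\gamma=3q+1+q=4q+1$.

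\textbf{Slack.} Finally,
\[
\Phi=9q+2-20-9\bigl(7(4q+1)-9(3q+1)\bigr)=9q-18-9(q-2)=0.
\]

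I expect no real obstacle in this argument. The only care needed is the case $q=1$ ($H=P_2$), where there are no nonmatching edges and one must check that the partition still has order at least three. The other point to confirm is that every arm support is penultimate and every original vertex is deep, which requires each original vertex to have no leaf neighbour. This holds because arms have length two.
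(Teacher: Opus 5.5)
Your proof is correct and follows the paper's argument step by step. It uses the same order count, the same identification of the deep forest as $q$ weighted paths $(2,0,2)$, the leaf-bundle congruence for the inertia, and the deep-component domination decomposition for $\gamma$. You also spell out subcubicity, the Schur-complement check that $\In(Q_C)=(2,0,1)$, and the final $\Phi=0$ arithmetic, all of which the paper leaves implicit.
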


\begin{proof}
The lift has $2q$ original vertices and $2q-1$ edge subdividers.  Exactly
$q-1$ nonmatching subdividers receive leaves.  The number of arms is
\begin{equation}
 \sum_{v\in V(H)}(3-\deg_H(v))
 =3(2q)-2(2q-1)=2q+2.                                \label{eq:arm-count}
\end{equation}
Each arm contributes a support and a leaf, so the order is
$2q+(2q-1)+(q-1)+2(2q+2)=9q+2$.

The penultimate vertices are the $q-1$ nonmatching subdividers and the
$2q+2$ arm supports, so $p=3q+1$.  The deep forest consists of $q$ disjoint
three-vertex paths, one for each matching edge: the endpoints are original
vertices and the middle vertex is the matching subdivider.  Every endpoint
has exactly two penultimate neighbours, whereas each middle vertex has none.
Thus every residual block in \eqref{eq:deep-residual} is
\[
 \begin{pmatrix}2&-1&0\\-1&1&-1\\0&-1&2\end{pmatrix},
\]
with inertia $(2,0,1)$.  The leaf-bundle congruence gives
\eqref{eq:lift-inertia}.  In the domination decomposition, each deep path
has exactly its middle vertex as a target, so $\delta_C=1$ and
$\gamma=p+q=4q+1$.
\end{proof}

The case $q=1$ is $W(P_2)$, precisely the eleven-vertex tree with
$(\gamma,\mu)=(5,4)$ in
\cite[Figure~2]{RajendraprasadSankaranarayanan2025}; thus that example is the
smallest member of this canonical equality family.

\subsection{A complete example and the inverse map}

Take $H=P_4$ with vertices $h_1h_2h_3h_4$ and its unique perfect matching
$M=\{h_1h_2,h_3h_4\}$.  The lift in
\Cref{fig:p4-lift-recovery} contains four hubs and three edge subdividers.
The middle edge $h_2h_3$ is the only nonmatching edge, so its subdivider
receives one leaf.  The arm multiplicities at the four hubs are
\[
 3-\deg_H(h_i)=2,1,1,2,
\]
giving six arm supports and six arm leaves.  Thus the full lift has
\[
 4+3+1+2\cdot6=20
\]
vertices.  Its penultimate set consists of the nonmatching subdivider and
the six arm supports, hence $p=7$.  Its deep forest consists of the two
paths
\[
 h_1-c_{12}-h_2,\qquad h_3-c_{34}-h_4,
\]
each with boundary weights $(2,0,2)$.  The exact decompositions therefore
give
\[
 \gamma(W(P_4))=7+2=9,\qquad
 \mu(W(P_4))=7,\qquad
 \In(L(W(P_4))-I)=(11,7,2).
\]
The score is $7\cdot9-9\cdot7=0$, so $9(7\gamma-9\mu)=0=20-20$.

\begin{figure}[t]
\centering
\resizebox{\textwidth}{!}{\PtwoPfourRecoveryGraphic}
\caption{The $q=2$ example and its inverse.  Colours display the forward
construction only and are ignored by the canonical inverse.  Blue subdividers belong to the
two deep three-vertex components and encode the matching edges.  The white
subdivider with a leaf encodes the unique nonmatching edge; the other white
vertices are arm supports.  The paired arrows summarize the forward lift and
canonical recovery from the uncoloured tree.}
\label{fig:p4-lift-recovery}
\end{figure}

The reverse direction uses no labels from the construction.  Given the
uncoloured tree in the lower panel, first form the graph-invariant sets
$L$, $P=N(L)$, and $D=V\setminus(L\cup P)$.  The two components of $T[D]$
are three-vertex paths; their endpoints are the four hubs and their middle
vertices determine the two matching pairs.  A vertex of $P$ with two hub
neighbours is then suppressed to a nonmatching edge, while a vertex of $P$
with one hub neighbour is deleted together with its leaf.  These operations
recover the displayed $P_4$ and its perfect matching.  The same recipe is
used in the general classification proof below.

\begin{theorem}[Subcubic equality classification]
\label{thm:subcubic-equality}
Let $T$ be a subcubic tree of order at least two.  Then
\[
 9(7\gamma(T)-9\mu(T))=|V(T)|-20
\]
if and only if either $T=P_2$ or $T\cong W(H)$ for a subcubic tree $H$ with
a perfect matching.  In the latter case $H$ is uniquely recoverable up to
isomorphism.
\end{theorem}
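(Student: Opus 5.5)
The reverse direction is already in hand: $P_2$ appears in the base table with $\Phi=0$, and \Cref{prop:lift-parameters} gives $\Phi(W(H))=0$ for every admissible $H$. For the forward direction, let $T$ be subcubic with $\Phi(T)=0$ and $T\ne P_2$. The base table rules out $P_3$, $P_4$ and $K_{1,3}$, and $P_1$ is excluded by order, so I may assume $|T|\ge5$. Then \Cref{cor:low-slack-rigidity} applies: $T$ is reduced, $s=t=r=0$, $p=\mu=3q+1$, $h=\gamma-p=q$, and $\ell=p$, $d=3h=3q$. The plan is to localize these global equalities to the deep components and then invoke the equality case of \Cref{thm:local-surplus} on each one.

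\textbf{Step 1: localization.} From $t=0$ and \eqref{eq:mu-decomposition}, every $Q_C$ satisfies $n_-(Q_C)=0$, so $Q_C\succeq0$. From \eqref{eq:gamma-decomposition}, $h=\sum_C\delta_C$. By \Cref{cor:three-delta} and the packing of \Cref{lem:partial-packing}, $3\delta_C\le|C|$ whenever every target has degree at least two in $C$. Every leaf of $C$ with $|C|\ge2$ lies in $A_C$, since otherwise it would have total degree one or would be adjacent only to deep vertices. The singleton case has to be handled separately: a singleton deep component has $b_v=\deg_T(v)\ge2$, and then $Q_C=(b_v-1)$ is positive with $\delta_C=0$. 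With these cases in place, $d=3h$ forces $|C|=3\delta_C$ for every $C$, and in particular excludes components with $\delta_C=0$. Hence each $C$ satisfies the hypotheses of \Cref{thm:local-surplus}, so $B_C\ge\tfrac43|C|$.

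\textbf{Step 2: the global pressure count, which is the main obstacle.} I need an identity that forces equality in \eqref{eq:local-surplus} for every $C$. I will count edges of the tree. Each leaf contributes one $L$--$P$ edge, and $\ell=p$ means each penultimate vertex has exactly one leaf. The edges between $P$ and $D$ number $\sum_C B_C$, and there are $e_{PP}$ edges inside $P$. Write $c$ for the number of deep components. Counting $n-1$ edges gives
\[
 n-1=\ell+e_{PP}+\sum_C B_C+(d-c).
\]
Substituting $n=2p+d$ and $p=3q+1$ yields $\sum_C B_C=6q-e_{PP}+c$. On the other hand, $\sum_C\tfrac43|C|=4q$. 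A direct subtraction therefore does not immediately close the argument, and I would reconcile it using the subcubic constraint. Each penultimate vertex has one leaf, hence at most two non-leaf neighbours. So $\sum_C B_C+2e_{PP}\le 2p$, with tree-connectivity constraints on top. Combining this with $\sum_C B_C\ge 4q$ and the edge identity, I expect to obtain $\sum_C(B_C-\tfrac43|C|-1)=-e_{PP}$ in the paper's pressure form $\pi_C=B_C-3\delta_C-1$. Here I would need a per-component lower bound $\pi_C\ge0$ derived from \eqref{eq:local-surplus}, tightened by one using integrality or the structure of the component. That would force $e_{PP}=0$ and $\pi_C=0$ for every $C$. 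I would expect this bookkeeping to be delicate, and it is where I would spend the most effort.

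\textbf{Step 3: identifying the components.} Once equality holds, each $C$ is a matching-subdivided tree $F_C$ with weights $2$ on $F_C$ and $0$ on the subdividers. Since $T$ is subcubic, an original vertex of $F_C$ has $\deg_{F_C}+2\le3$, so $\deg_{F_C}\le1$, and $F_C=P_2$. Thus every $C$ is the weighted path $(2,0,2)$.

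\textbf{Step 4: reconstruction and uniqueness.} Every penultimate vertex has exactly one leaf and no penultimate neighbour, and it has at most two deep neighbours, each of which is a path endpoint. Define $H$ as follows: its vertices are the path endpoints, each deep path $(2,0,2)$ becomes a matching edge, and each penultimate vertex with two deep neighbours is suppressed to an edge. Penultimate vertices with one deep neighbour become arms. A penultimate vertex with no deep neighbour is impossible, because it would give a $P_2$ component. Since $T$ is a tree, $H$ is a tree. Each hub has degree three in $T$, which yields exactly the arm count $3-\deg_H(v)$, so $T\cong W(H)$, and $H$ is subcubic with the perfect matching read from the deep paths. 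This inverse uses only the invariant sets $L,P,D$, as in the $P_4$ example. That gives uniqueness up to isomorphism, using the uniqueness of perfect matchings in trees.
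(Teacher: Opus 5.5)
Your architecture (low-slack rigidity, localization to deep components, a core edge count, the equality case of \Cref{thm:local-surplus}, then inverse reconstruction) is the right one. However, Step~2 is left open, and it is the step that actually forces the rigid structure. The reason you give for it not closing is an arithmetic slip. From $n-1=\ell+e_{PP}+\sum_C B_C+(d-c)$ with $\ell=p$ and $n=2p+d$ one gets
\[
 \sum_C B_C=p-1+c-e_{PP}=3q+c-e_{PP},
\]
not $6q+c-e_{PP}$.

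With the correct count, direct subtraction does close the argument. Step~1 already gives $|C|=3\delta_C$ with $\delta_C\ge1$, so \eqref{eq:local-surplus} yields $B_C\ge\tfrac43|C|=4\delta_C$. Also $\sum_C\delta_C=q$ with every $\delta_C\ge1$ gives $c\le q$. Hence
\[
 4q\le\sum_C B_C=3q+c-e_{PP}\le 4q-e_{PP}.
\]
This forces $e_{PP}=0$, $c=q$, every $\delta_C=1$, and equality in \eqref{eq:local-surplus} for every component. So each $C$ is the weighted path $(2,0,2)$, and your Step~3 is not even needed.

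The substitutes you sketch do not work as written. The bound $\sum_C B_C+2e_{PP}\le 2p$ contributes nothing. The identity $\sum_C(B_C-\tfrac43|C|-1)=-e_{PP}$ is false: its left side equals $-q-e_{PP}$. The pressure $\pi_C=B_C-3\delta_C-1$ subtracts $3\delta_C=|C|$, not $\tfrac43|C|$, and the true identity is $\sum_C(B_C-|C|-1)=-e_{PP}$. No integrality tightening is needed for $\pi_C\ge0$, since \eqref{eq:local-surplus} gives $B_C-|C|-1\ge\delta_C-1\ge0$ directly.

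With this repair your proof is complete and differs from the paper's only in the order of steps. The paper first proves $\pi_C\ge0$ from the all-ones bound alone, without \Cref{thm:local-surplus}, and gets $e_{PP}=0$ and $\pi_C=0$ from the pressure collision. Only then does it derive $|C|=3\delta_C$ and use the local surplus theorem to obtain $\delta_C=1$. You instead obtain $|C|=3\delta_C$ up front from $d=3h$ and apply the local surplus theorem before counting, which is equally valid. Steps~1 and~4 are sound as written, given $e_{PP}=0$ from the corrected Step~2.
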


\begin{proof}
The tree $P_2$ has $\gamma=\mu=1$.  Now let $T$ be a nontrivial equality
tree.  The base table in the proof of \Cref{thm:global-order-law} gives
$\Phi(P_3)=1$, $\Phi(P_4)=20$, and $\Phi(K_{1,3})=2$, so no tree of order
three or four is an equality tree.  Hence $|V(T)|\ge5$, and
\Cref{cor:low-slack-rigidity} gives, for some $q\ge1$,
\begin{equation}
 \ell=p=\mu=3q+1,\qquad d=3q,\qquad \gamma=4q+1.      \label{eq:equality-tuple}
\end{equation}
Equations \eqref{eq:gamma-decomposition} and
\eqref{eq:mu-decomposition} imply
\begin{equation}
 \sum_C\delta_C=q,\qquad Q_C\succeq0\quad\text{for every }C.             \label{eq:equality-local-data}
\end{equation}
Because a deep vertex is neither a leaf nor adjacent to a leaf,
subcubicity gives
\begin{equation}
 2\le\deg_C(v)+b_v\le3.                              \label{eq:deep-boundary}
\end{equation}

Write $B_C=\sum_{v\in C}b_v$.  If $\delta_C>0$,
\Cref{cor:three-delta} gives $|C|\ge3\delta_C$, while
$Q_C\succeq0$ gives $B_C\ge|C|$ from the all-ones quadratic form.
Equality $B_C=3\delta_C$ would force $Q_C\boldsymbol1=0$ and hence
$b_v=1$ for every $v$, contradicting the existence of a zero-weight target.
If $\delta_C=0$, then $R_C=\varnothing$, so every $b_v\ge1$ and
$B_C\ge |C|\ge1$.  Equality $B_C=1$ would make $C$ a singleton with
$b_v=1$, contradicting $2\le\deg_C(v)+b_v$.  Thus in all
cases
\begin{equation}
 \pi_C:=B_C-3\delta_C-1\ge0.                          \label{eq:local-pressure}
\end{equation}

Let $c$ be the number of deep components and let $e_{PP}$ count edges with
both endpoints in $P$.  The leaf-deleted core is a tree, so
\[
 e_{PP}+\sum_C B_C+(d-c)=p+d-1.
\]
Using \eqref{eq:equality-tuple} and \eqref{eq:equality-local-data},
\begin{equation}
 \sum_C\pi_C=-e_{PP}.                                \label{eq:pressure-collision}
\end{equation}
The left side is nonnegative and the right side nonpositive.  Therefore
$e_{PP}=0$ and $\pi_C=0$ for every component.
In particular, every $\delta_C$ is positive: otherwise pressure equality
would give the already excluded value $B_C=1$.

Fix a component.  Pressure equality gives
$B_C=3\delta_C+1$.  The all-ones bound and
\Cref{cor:three-delta} show that
$3\delta_C\le|C|\le3\delta_C+1$.  The upper alternative would give
$B_C=|C|$ and hence $b_v=1$ for every vertex, contradicting
$\delta_C>0$.  Thus $|C|=3\delta_C$.  Every leaf of $C$ lies in $A_C$ by
\eqref{eq:deep-boundary}, so \Cref{thm:local-surplus} applies and gives
\[
 1=B_C-|C|\ge |C|/3=\delta_C.
\]
Therefore $\delta_C=1$, and equality in the local theorem makes $C$ the
weighted path $(2,0,2)$.  There are exactly $q$ such components.

It remains to recover $H$.  The endpoints of the deep paths become the
vertices of $H$, and the two endpoints of each path are joined by a matching
edge.  Every penultimate vertex has one leaf and, since $e_{PP}=0$, one or
two deep neighbours.  A penultimate vertex with two deep neighbours is
suppressed to a nonmatching edge of $H$; one with one deep neighbour is an
arm support and is deleted together with its leaf.  These graph-invariant
operations produce a connected acyclic subcubic graph with the displayed
perfect matching, and reversing them gives $T=W(H)$.  They also prove
uniqueness.  The converse is \Cref{prop:lift-parameters}.
\end{proof}

\begin{corollary}
\label{cor:subcubic-ratio}
The supremum of $\gamma(T)/\mu(T)$ over subcubic trees equals $4/3$.
\end{corollary}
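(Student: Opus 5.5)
The plan has two halves: an upper bound of $4/3$ on the ratio for every subcubic tree, and a sequence of subcubic trees whose ratios approach $4/3$.

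\emph{Upper bound.} I will not use the general strict bound of \cite{RajendraprasadSankaranarayanan2025} directly, so that the argument stays self-contained within the tools already developed. For a reduced tree of order at least three, the inequality \eqref{eq:cited-reduced-bound} together with $p\le\mu$ from \eqref{eq:mu-decomposition} gives
\[
 \gamma\le\mu+\frac{\mu-1}{3}<\frac43\mu .
\]
If the tree is not reduced, a clean contraction \eqref{eq:clean-contraction-input} raises $\mu$ by exactly one and raises $\gamma$ by at most one. The inequality $\gamma<\tfrac43\mu$ therefore propagates by induction on order, starting from the base table ($P_2,P_3,P_4,K_{1,3}$ all satisfy it). Hence $\gamma(T)/\mu(T)<4/3$ for every tree, and in particular for every subcubic tree.

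\emph{Lower bound.} For each $q\ge1$ I take $H$ to be the path $P_{2q}$. It is subcubic and has the perfect matching formed by its alternate edges. By \Cref{prop:lift-parameters}, the lift $W(P_{2q})$ is a subcubic tree with
\[
 \frac{\gamma}{\mu}=\frac{4q+1}{3q+1},
\]
and this ratio tends to $4/3$ as $q\to\infty$. Combining the two halves, the supremum over subcubic trees equals $4/3$, and it is not attained.

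The only point needing care is the contraction step of the induction in the upper bound. Writing $\mu'=\mu(T')$, the induction hypothesis gives $\gamma(T)\le\gamma(T')+1<\tfrac43\mu'+1$. Since $\tfrac43(\mu'+1)=\tfrac43\mu'+\tfrac43>\tfrac43\mu'+1$, this yields $\gamma(T)<\tfrac43(\mu'+1)=\tfrac43\mu(T)$. So the step is routine.
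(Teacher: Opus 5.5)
Your proof is correct, and the lower-bound half (the lifts $W(P_{2q})$ with ratio $(4q+1)/(3q+1)\to4/3$) is the same as the paper's.

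For the upper bound you take a different route. The paper simply cites the strict ratio theorem $\gamma/\mu<4/3$ of \cite{RajendraprasadSankaranarayanan2025}. You instead rederive it from three ingredients: the reduced-tree inequality \eqref{eq:cited-reduced-bound}, the support bound $\mu\ge p$ from \eqref{eq:mu-decomposition}, and the clean-contraction interface \eqref{eq:clean-contraction-input}. You combine them by the same induction that proves \Cref{thm:global-order-law}. In the paper's coordinates your reduced case is just the nonnegativity of $t$ and $r$: $\gamma-\mu\le q\le(p-1)/3\le(\mu-1)/3$.

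The paper's route is a one-line citation. Yours has the advantage that the corollary then depends only on the single inequality the paper already declares as its sole imported input. You also correctly run the induction over all trees rather than only subcubic ones, which is necessary because a clean quotient of a subcubic tree can have a degree-four vertex.

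It is not fully self-contained, however, as you claim: \eqref{eq:cited-reduced-bound} is itself Lemma~12 of the same predecessor paper.

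A small bonus of your argument is that, by integrality, it gives $3\gamma\le4\mu-1$ for every tree of order at least two. Every lift $W(H)$ attains this with equality, since $3(4q+1)=4(3q+1)-1$.
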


\begin{proof}
The strict upper bound $\gamma/\mu<4/3$ for trees is proved in
\cite{RajendraprasadSankaranarayanan2025}.  Along any sequence of lifts with
$q\to\infty$,
\[
 \frac{\gamma(W(H))}{\mu(W(H))}
 =\frac{4q+1}{3q+1}\longrightarrow\frac43.
\]
\end{proof}

\section{The exact maximum at every order}
\label{sec:exact-extremal}

The global inequality becomes an exact extremal formula because additional
leaves at an existing support do not change either parameter.

\begin{lemma}[Leaf inflation]
\label{lem:leaf-inflation}
Let $u$ be a vertex of a tree $T$ with a leaf neighbour $v$, and let $T^+$
be obtained by adding a new leaf $w$ adjacent to $u$.  Then
\[
 \gamma(T^+)=\gamma(T),\qquad \mu(T^+)=\mu(T).
\]
The multiplicity of the Laplacian eigenvalue one increases by one.
\end{lemma}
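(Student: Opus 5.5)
The domination statement can be settled by an exchange argument on minimum dominating sets. The spectral statements come from the leaf-bundle congruence of \Cref{lem:leaf-bundle}, which sees leaves only through their bundles.

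For domination, note first that $u$ is not itself a leaf unless $T=P_2$. The case $T=P_2$ (then $T^+=P_3$) we check directly: $\gamma=\mu=1$ in both, by the base table in the proof of \Cref{thm:global-order-law}. Otherwise, every dominating set of $T^+$ must meet $\{u,v\}$, since $v$ has to be dominated. If it contains $v$ or $w$ but not $u$, then it contains both $v$ and $w$, and replacing them by $u$ gives a dominating set that is no larger. So $T^+$ has a minimum dominating set containing $u$ and neither $v$ nor $w$, and deleting $w$ turns it into a dominating set of $T$; hence $\gamma(T)\le\gamma(T^+)$. Conversely, a minimum dominating set of $T$ can be normalized in the same way to contain $u$, and it then also dominates $w$; hence $\gamma(T^+)\le\gamma(T)$. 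This is the same normalization used in \Cref{lem:domination-decomposition}.

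For the spectrum, assume $|T|\ge3$, so that $u$ is a non-pendant support. Apply \Cref{lem:leaf-bundle} to $T$ and to $T^+$ with the full leaf partition. Passing from $T$ to $T^+$ leaves the support set $U$, the deep set $D$ and the residual block $(L-I)[D,D]$ unchanged. Indeed, a vertex's membership in $D$ depends only on adjacency to leaves, and $w$ is adjacent only to $u\in U$. The diagonal entry $\deg(x)-1$ changes only at $u$, and $u$ lies outside $D$. The only change is that $\lambda_u$ increases by one. In the congruence, this changes the entry $\sqrt{\lambda_u}$ of $R$, which does not affect the inertia of the $2\times2$ block: it still has one positive and one negative eigenvalue. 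It also adds one to $\sum_u(\lambda_u-1)$, the size of the zero block. Therefore $n_-$ and $n_+$ are unchanged and $n_0$ increases by one. By \eqref{eq:mu-inertia}, $\mu(T^+)=\mu(T)$, and since $n_0(L-I)$ is the multiplicity of the eigenvalue one, that multiplicity increases by one.

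The only point needing care is the bookkeeping check that the partition $(L,U,D)$ and the block $Q_C$ are genuinely unaffected, in particular that no deep vertex acquires a leaf neighbour. The degenerate case $T=P_2$, where $u$ is itself pendant, is handled by the direct check above. No step is expected to be a real obstacle.
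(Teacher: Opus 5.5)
Your proposal is correct. The domination half is essentially the paper's exchange argument.

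For the spectral half you take a different route. The paper does not invoke the leaf-bundle congruence. It writes the quadratic form of $L(T^+)-I$ in the coordinates $(x_v,x_w,x_u,x_R)$ and applies the invertible substitution $y=x_v+x_w-x_u/2$, $z=x_v-x_w$. This turns it into the quadratic form of $L(T)-I$ in $(y,x_u,x_R)$, with $z$ absent, so $L(T^+)-I\cong(L(T)-I)\oplus[0]$ follows at once. That argument is purely local: it needs no identification of the sets $L$, $P$, $D$ or of the residual blocks, and it covers $T=P_2$ uniformly.

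Your route instead reuses the global congruence of \Cref{lem:leaf-bundle}. It has the merit of making visible that only the zero block grows. However, that lemma needs $u$ to be a non-pendant support, which forces the separate case $T=P_2$ and the bookkeeping on the partition.

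In that separate case you verify only $\gamma$ and $\mu$ from the base table, but the multiplicity claim also needs a line there. Since $L(P_2)$ has spectrum $\{0,2\}$ and $L(P_3)$ has spectrum $\{0,1,3\}$, the multiplicity of the eigenvalue $1$ goes from $0$ to $1$.
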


\begin{proof}
A minimum dominating set of $T$ may be chosen to contain $u$: replace $v$
by $u$ if necessary.  It also dominates $w$, so
$\gamma(T^+)\le\gamma(T)$.  Conversely, if a minimum dominating set of
$T^+$ contains $w$, replace $w$ by $v$ when neither $u$ nor $v$ is selected,
and otherwise delete $w$.  The resulting set dominates $T$ and has no larger
cardinality.  Hence the domination numbers agree.

For the spectral statement, order the coordinates of $L(T^+)-I$ as
$(v,w,u,R)$.  Its quadratic form is
\[
 -2x_u(x_v+x_w)+\deg_T(u)x_u^2
   +2x_u b^{\trans}x_R+x_R^{\trans}Cx_R.
\]
The invertible coordinates
\[
 y=x_v+x_w-\frac{x_u}{2},\qquad z=x_v-x_w
\]
turn this form into the quadratic form of $L(T)-I$ on $(y,u,R)$, with no
$z$ term.  Thus
\[
 L(T^+)-I\cong (L(T)-I)\oplus[0].
\]
The negative index is unchanged and the nullity increases by one.
\end{proof}

A related pendant-addition statement appears in
\cite[Lemma~12]{BarikJena2026}.  We include the congruence because the proof
here needs both parameter invariance and the exact increase of the
eigenvalue-one multiplicity.

For a set $X$ of distinct arm supports of $W(H)$, let $W(H;X)$ be the tree
obtained by adding one second leaf at every support in $X$.  Distinctness is
exactly what preserves maximum degree three.

\begin{corollary}[Loaded-lift parameters]
\label{cor:loaded-parameters}
If $H$ has $2q$ vertices and $|X|=k$, then
\begin{align*}
 |V(W(H;X))|&=9q+2+k,\\
 \gamma(W(H;X))&=4q+1,\\
 \mu(W(H;X))&=3q+1,\\
 \Phi(W(H;X))&=k.
\end{align*}
Moreover, the number of available arm supports is exactly $2q+2$.
\end{corollary}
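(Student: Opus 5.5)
The corollary follows by iterating \Cref{lem:leaf-inflation} along the loaded supports, after which the parameters are read off from \Cref{prop:lift-parameters}. Order the set $X$ as $x_1,\dots,x_k$ and build the intermediate trees $T_0=W(H)$ and $T_j=T_{j-1}+\{\text{new leaf at }x_j\}$. At each step $x_j$ is an arm support of $W(H)$, so it has its original arm leaf as a leaf neighbour in $T_{j-1}$. The hypothesis of \Cref{lem:leaf-inflation} therefore holds at every stage, and each step preserves both $\gamma$ and $\mu$. Combining this with \eqref{eq:lift-order-gamma} and \eqref{eq:lift-inertia} gives $\gamma(W(H;X))=4q+1$ and $\mu(W(H;X))=3q+1$. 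The order increases by exactly one per step, giving $9q+2+k$.

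The slack is then a direct substitution in the definition of $\Phi$:
\[
 \Phi=(9q+2+k)-20-9\bigl(7(4q+1)-9(3q+1)\bigr)=9q-18+k-9(q-2)=k.
\]
This is consistent with $\Phi(W(H))=0$ from \Cref{prop:lift-parameters}. As an optional cross-check, each added leaf raises $\ell-p$ by one and leaves $p$, $d$, $h$, $s$, $t$, and $r$ untouched. This matches \eqref{eq:five-term-slack} provided $W(H;X)$ is reduced. That holds because the leaf bundles only grow and the deep forest, a union of $(2,0,2)$ paths, is unchanged.

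The count of available arm supports is \eqref{eq:arm-count}. Its value $\sum_v(3-\deg_H v)=6q-2(2q-1)=2q+2$ uses only that $H$ is a tree on $2q$ vertices. To justify the word ``exactly'' in the context of subcubic loading, I will also note two points. First, an arm support has degree two in $W(H)$ (one hub neighbour and one leaf), so it can accept exactly one extra leaf before reaching degree three. Second, distinctness of the elements of $X$ is therefore exactly what keeps $W(H;X)$ subcubic.

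There is no real obstacle here. The only point needing care is the inductive application of the leaf-inflation lemma: the vertex receiving the new leaf must already have a leaf neighbour in the current tree, and this holds because arm leaves are never removed.
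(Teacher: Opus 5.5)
Your proposal is correct and is the paper's own proof: iterate \Cref{lem:leaf-inflation} at the distinct arm supports, each of which keeps its original arm leaf, read off $\gamma$ and $\mu$ from \Cref{prop:lift-parameters}, substitute into $\Phi$, and cite \eqref{eq:arm-count} for the $2q+2$ arm supports. One small point about your optional cross-check: the reducedness proviso is unnecessary, because \eqref{eq:five-term-slack} is a purely algebraic identity, and reducedness is needed only for the nonnegativity of $r$.
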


\begin{proof}
Iterate \Cref{lem:leaf-inflation} and use
\Cref{prop:lift-parameters,eq:arm-count}.
\end{proof}

\begin{theorem}[Exact fixed-order maximum]
\label{thm:exact-order-maximum}
For every integer $n\ge2$,
\begin{equation}
 \max_{\substack{T\text{ a tree}\\|V(T)|=n}}
 \bigl(7\gamma(T)-9\mu(T)\bigr)
 =\left\lfloor\frac{n-20}{9}\right\rfloor.          \label{eq:exact-order-maximum}
\end{equation}
For every $n\ge29$, the same maximum is attained by a subcubic tree.
Consequently, order twenty-nine is the first order for which
$7\gamma(T)>9\mu(T)$ is possible.
\end{theorem}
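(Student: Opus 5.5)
The upper bound is immediate from \Cref{thm:global-order-law}. Since $7\gamma(T)-9\mu(T)$ is an integer and $9(7\gamma-9\mu)\le n-20$, every tree of order $n$ has score at most $\lfloor (n-20)/9\rfloor$. The remaining work is to realize this value at every order $n\ge2$, using the lifts of \Cref{def:matched-lift} and leaf inflation.

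For $2\le n\le10$ the target is $\lfloor (n-20)/9\rfloor=-2$. Take the star $K_{1,n-1}$, or $P_2$ when $n=2$. Its domination number is $1$. Applying \Cref{lem:leaf-bundle} with a single support vertex and empty deep set gives $\mu=|U|+0=1$. Hence the score is $7-9=-2$.

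For $n\ge11$, write $n=9q+2+k$ with $q\ge1$ and $0\le k\le8$. Then $(n-20)/9=q-2+k/9$, so the target value is $q-2$. Take $H=P_{2q}$, a subcubic tree with a perfect matching. By \Cref{prop:lift-parameters}, $W(H)$ has order $9q+2$, $\gamma=4q+1$ and $\mu=3q+1$, so its score is $q-2$. Now add $k$ further leaves at existing supports. By \Cref{lem:leaf-inflation}, each addition raises the order by one and leaves $\gamma$ and $\mu$ unchanged. This produces a tree of order $n$ with score $q-2$, which attains the maximum in \eqref{eq:exact-order-maximum}. In general the added leaves may be stacked on a single support, since \Cref{lem:leaf-inflation} places no restriction on repeated additions.

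For the subcubic claim, use distinct arm supports instead, that is, the trees $W(H;X)$ of \Cref{cor:loaded-parameters}. These are subcubic, and $2q+2$ arm supports are available. When $n\ge29$ we have $q\ge3$, so $2q+2\ge8\ge k$ and a set $X$ of $k$ distinct arm supports exists. By \Cref{cor:loaded-parameters}, $W(H;X)$ is a subcubic tree of order $n$ with score $q-2$.

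The final sentence of the theorem follows from the formula. The quantity $\lfloor (n-20)/9\rfloor$ is positive exactly when $n\ge29$, so $7\gamma>9\mu$ is impossible below order twenty-nine and is attained at $n=29$ by $W(P_6)$.

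The only point needing care is this bookkeeping between the residue range $0\le k\le8$ and the arm capacity $2q+2$. It is why subcubicity is claimed only from order twenty-nine onward, even though the general-tree construction works at every order.
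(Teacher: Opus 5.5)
Your proposal is correct and follows the paper's proof essentially step for step: the upper bound comes from \Cref{thm:global-order-law}, stars handle $2\le n\le10$, $W(P_{2q})$ with $k$ leaves added by \Cref{lem:leaf-inflation} handles $n\ge11$, and the subcubic case uses $k\le 8\le 2q+2$ distinct arm loads once $q\ge3$. One cosmetic point: \Cref{lem:leaf-bundle} requires a non-pendant support vertex, so it does not literally cover $P_2$; however, $\mu(P_2)=1$ is immediate from its Laplacian spectrum $\{0,2\}$ or from the base table.
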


\begin{proof}
The upper bound is \Cref{thm:global-order-law}.  For $2\le n\le10$, the
star $K_{1,n-1}$ has $\gamma=\mu=1$ and score
$-2=\lfloor(n-20)/9\rfloor$.

Let $n\ge11$ and write
\[
 q=\left\lfloor\frac{n-2}{9}\right\rfloor,
 \qquad k=n-(9q+2),\qquad 0\le k\le8.
\]
Take $H=P_{2q}$ and start from $W(H)$.  Adding $k$ leaves at any one existing
support gives an $n$-vertex tree with unchanged parameters and score
\[
 7(4q+1)-9(3q+1)=q-2
  =\left\lfloor\frac{n-20}{9}\right\rfloor.
\]
This proves equality at every order.

If $n\ge29$, then $q\ge3$, and $W(P_{2q})$ has $2q+2\ge8$ arm supports.
Loading $k$ distinct arms gives the same score while preserving maximum
degree three.  Finally, the right side of \eqref{eq:exact-order-maximum} is
nonpositive through order twenty-eight and equals one at order twenty-nine.
\end{proof}

\begin{figure}[t]
  \centering
  \resizebox{.94\textwidth}{!}{\PtwoOrderStaircaseGraphic}
  \caption{The exact fixed-order maximum as a staircase.  Each black point
  is the unloaded canonical lift order $9q+2$; leaf inflation fills the
  following plateau without changing $\gamma$ or $\mu$.  The dashed line
  marks the first positive score at order twenty-nine.}
  \label{fig:order-staircase}
\end{figure}

\section{Subcubic stability through slack nineteen}
\label{sec:stability}

We now prove that the equality representation persists through slack
nineteen.  The result is stronger than a classification
of maximizers: it controls trees one and two integer score levels below the
maximum as well.

The maximum-degree-three hypothesis is essential throughout
\Cref{sec:stability,sec:sharpness,sec:slack-twenty-one}.  It is what turns
the residual boundary condition into
$2\le\deg_C(v)+b_v\le3$ and limits each hub to $3-\deg_H(v)$ arms.  No
classification in these sections is asserted for trees of maximum degree at
least four.

The key point is that the global slack in this range has already lost its
spectral and residue terms, but it has not yet been shown where the remaining
leaf/deep slack sits.  The local pressure argument below rules out hidden
deep slack.  It does so by assigning a nonnegative pressure to every deep
component and then observing that the edge count of the leaf-deleted core
forces the sum of those pressures to be nonpositive.

\begin{theorem}[Nineteen-layer stability]
\label{thm:nineteen-stability}
Let $T$ be a subcubic tree of order at least two.  If
\[
 0\le\Phi(T)\le19,
\]
then either
\[
 (T,\Phi(T))\in\{(P_2,0),(P_3,1),(K_{1,3},2)\},
\]
or $|V(T)|\ge5$ and there is a unique triple $(q,H,X)$, up to isomorphism
of $(H,X)$, where $q\ge1$, $H$ is a subcubic tree on $2q$ vertices with a
perfect matching, and $X$ is a set of distinct arm supports of $W(H)$, such
that
\begin{equation}
 T\cong W(H;X),\qquad |X|=\Phi(T).                    \label{eq:stability-representation}
\end{equation}
The alternatives are disjoint.  Conversely, the three displayed small trees
have their stated slacks, and every loaded lift with $|X|\le19$ satisfies
\eqref{eq:stability-representation}.
\end{theorem}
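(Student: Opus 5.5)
The plan is to reduce the statement to the equality argument of \Cref{thm:subcubic-equality}, after checking that the pressure collision there never used $\ell=p$ or $d=3q$. Only $p=3q+1$ and $\sum_C\delta_C=q$ should be needed.

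\emph{Small orders and parameters.} First I dispose of orders two to four using the base table in the proof of \Cref{thm:global-order-law}. The subcubic trees of these orders are $P_2,P_3,P_4,K_{1,3}$, and $P_4$ is excluded because $\Phi(P_4)=20$; this gives the three listed exceptions. For $|V(T)|\ge5$, \Cref{cor:low-slack-rigidity} makes $T$ reduced with $s=t=r=0$, $p=\mu=3q+1$, $\gamma=4q+1$ and $(\ell-p)+(d-3q)=\Phi(T)$. I also need $q\ge1$. If $q=0$ then $p=1$, so every deep vertex lies at distance at least two from the unique support; a deep leaf-free subtree cannot hang off a single support in a tree, so $T$ is a star. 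A subcubic star has at most four vertices, a contradiction. As in \eqref{eq:equality-local-data}, the decompositions \eqref{eq:gamma-decomposition} and \eqref{eq:mu-decomposition} then give $\sum_C\delta_C=q$ and $Q_C\succeq0$ for every $C$, and subcubicity gives \eqref{eq:deep-boundary}.

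\emph{Pressure collision.} Next I rerun the pressure argument verbatim. The proof that $\pi_C\ge0$ in \eqref{eq:local-pressure} used only $Q_C\succeq0$, \Cref{cor:three-delta} and \eqref{eq:deep-boundary}. The core edge count gives $\sum_CB_C=p-1+c-e_{PP}$, hence
\[
\sum_C\pi_C=p-1-3q-e_{PP}=-e_{PP},
\]
using only $p=3q+1$ and $\sum_C\delta_C=q$. So $e_{PP}=0$ and every $\pi_C=0$. The component analysis is then unchanged: $\delta_C>0$, $3\delta_C\le|C|\le B_C=3\delta_C+1$ with the upper case excluded, and \Cref{thm:local-surplus} forces each $C$ to be the weighted path $(2,0,2)$. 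Hence $d=3q$, the deep-domination term $d-3h$ vanishes, and the leaf excess satisfies $\ell-p=\Phi(T)$.

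\emph{Reading off the lift.} Every endpoint hub has total degree three with exactly two penultimate neighbours, and middle vertices have none, so there are $4q$ deep--penultimate incidences. Since $e_{PP}=0$ and $T$ is connected, each penultimate vertex has one or two deep neighbours. Contracting the deep paths to matching edges and suppressing two-hub supports must produce a tree; that tree is $H$, with $q-1$ two-hub supports. Each two-hub support has degree three and therefore exactly one leaf. The remaining $4q-2(q-1)=2q+2$ supports have one hub neighbour, and these are the arm supports. Subcubicity allows each of them at most two leaves, so the $\ell-p=\Phi(T)$ surplus leaves sit on distinct arm supports. This gives $T\cong W(H;X)$ with $|X|=\Phi(T)$. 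Uniqueness of $(H,X)$ follows because the recovery operations are graph-invariant, as in the inverse map of \Cref{thm:subcubic-equality}, and $X$ is the set of supports carrying two leaves. The alternatives are disjoint by order, since $W(H;X)$ has order at least $11$. The converse is \Cref{cor:loaded-parameters}.

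The main obstacle is the audit in the pressure step: one must confirm that neither the pressure lower bound nor the core edge identity silently used $\ell=p$ or $d=3q$. The delicate case is $\delta_C=0$, which relies on $b_v\ge1$ together with \eqref{eq:deep-boundary}. A secondary point is ruling out penultimate vertices with no deep neighbour, which I handle by connectivity once $q\ge1$ and $e_{PP}=0$.
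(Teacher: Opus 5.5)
Your proposal is correct and follows the paper's proof essentially step for step. The steps are low-slack rigidity, then the unchanged pressure collision $\sum_C\pi_C=-e_{PP}$ forcing every deep component to be the ordinary path $(2,0,2)$ with $d=3q$ and $\ell-p=\Phi(T)$, and then canonical recovery of $(H,X)$. The only differences are cosmetic: you obtain $q\ge1$ from $p=1$ rather than from $\gamma=1$, and you read off $H$ and the loaded arms directly by incidence counting, whereas the paper first deletes the second leaves to get an equality tree $T_0$ and then invokes \Cref{thm:subcubic-equality}.
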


\begin{proof}
If $|V(T)|\le4$, the base table in the proof of
\Cref{thm:global-order-law} gives exactly the three displayed cases (the
remaining tree $P_4$ has slack twenty).  Hence assume $|V(T)|\ge5$.
Put $j=\Phi(T)$.  By \Cref{cor:low-slack-rigidity},
\begin{equation}
 p=\mu=3q+1,\quad \gamma=4q+1,\quad
 n=9q+2+j,\quad (\ell-p)+(d-3q)=j.                  \label{eq:stability-parameters}
\end{equation}
If $q=0$, then $\gamma=1$, so $T$ is a star.  A star of order at least five
is not subcubic; hence $q\ge1$.

The exact decompositions from \Cref{sec:preliminaries} give
\begin{equation}
 \sum_C\delta_C=q,\qquad Q_C\succeq0\quad\text{for every }C.             \label{eq:stability-local-data}
\end{equation}
The subcubic boundary \eqref{eq:deep-boundary} remains valid.  Define
$B_C=\sum_{v\in C}b_v$ and
\[
 \pi_C=B_C-3\delta_C-1.
\]
The proof of \eqref{eq:local-pressure} used only
\Cref{cor:three-delta}, positive semidefiniteness, and the boundary
condition, so $\pi_C\ge0$ in the present setting as well.

Here $3\delta_C$ is the boundary budget forced by a maximum
distance-three packing, while the extra unit is forced by integrality.  If
$B_C=3\delta_C$, the all-ones vector would lie in the kernel of $Q_C$ and
would make every boundary weight equal to one, contradicting the existence
of a zero-weight packed target.  Thus $\pi_C$ measures genuine local excess,
not an artefact of summing several components.

Let $c$ be the number of deep components and let $e_{PP}$ count edges inside
the penultimate set.  The leaf-deleted core identity now gives
\[
 \sum_C B_C=p+c-1-e_{PP}=3q+c-e_{PP}.
\]
Together with \eqref{eq:stability-local-data}, this yields the same sign
collision as in the equality case:
\begin{equation}
 \sum_C\pi_C=-e_{PP}.                                \label{eq:stability-collision}
\end{equation}
Therefore $e_{PP}=0$ and $\pi_C=0$ for every component.
In particular, no component has $\delta_C=0$, since pressure equality would
then give the excluded value $B_C=1$.

The collision in \eqref{eq:stability-collision} is exact.  Its left-hand
side is a sum of independently nonnegative local quantities; its right-hand
side is the negative of an edge count.  Hence no cancellation between a
``good'' and a ``bad'' component is possible.  Every component must achieve
local pressure equality and the penultimate set must be independent.  This
is the step that converts a numerical low-slack statement into a
componentwise classification.

For completeness, we repeat the consequence of local pressure equality.
The identity $B_C=3\delta_C+1$, the packing bound, and the all-ones
positive-semidefinite bound give
\[
 3\delta_C\le |C|\le3\delta_C+1.
\]
The upper alternative forces every $b_v=1$ and hence $\delta_C=0$, a
contradiction.  Thus $|C|=3\delta_C$.  Every leaf of $C$ has positive weight,
so \Cref{thm:local-surplus} applies and gives
\[
 1=B_C-|C|\ge|C|/3=\delta_C.
\]
Consequently $\delta_C=1$, and $C$ is the weighted path $(2,0,2)$.  Since
the sum of all $\delta_C$ is $q$, there are $q$ components and
\begin{equation}
 d=3q.                                                \label{eq:no-deep-slack}
\end{equation}
Substitution into \eqref{eq:stability-parameters} gives
\begin{equation}
 \ell-p=j.                                           \label{eq:all-slack-is-leaf}
\end{equation}

Let $\lambda_u$ be the number of leaf neighbours of $u\in P$.  Every
$\lambda_u\ge1$.  The leaf-deleted core is connected and nontrivial, so
every support has a core neighbour.  Subcubicity gives $\lambda_u\le2$,
and a support with two leaves has exactly one core neighbour.  Therefore
\[
 j=\ell-p=\sum_{u\in P}(\lambda_u-1)
\]
counts exactly the supports with two leaves.

Delete one leaf at each of those supports and call the resulting tree
$T_0$.  Every affected support retains one leaf, so the penultimate and deep
sets, all partial-domination targets, and every residual matrix remain
unchanged.  Thus $\gamma(T_0)=4q+1$ and $\mu(T_0)=3q+1$, while
$|T_0|=9q+2$.  By \Cref{thm:subcubic-equality},
$T_0\cong W(H)$ for a unique subcubic perfect-matching skeleton $H$.
The supports from which leaves were deleted have one core neighbour in
$T_0$ and are therefore exactly arm supports.  Their set is $X$, and
$|X|=j$, proving \eqref{eq:stability-representation} and canonical recovery.

Conversely, \Cref{cor:loaded-parameters} shows directly that every defined
$W(H;X)$ has $\Phi=|X|$ and the stated parameters.
\end{proof}

The stability theorem has an exact capacity statement.

\begin{corollary}[Exact layer capacity]
\label{cor:stability-capacity}
For integers $q\ge1$ and $0\le j\le19$, a subcubic tree in the
$(q,j)$ stability layer exists if and only if
\[
 j\le2q+2.
\]
\end{corollary}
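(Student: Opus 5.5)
The proof will be a direct reduction to \Cref{thm:nineteen-stability} and \Cref{cor:loaded-parameters}. By the $(q,j)$ stability layer I mean the subcubic trees $T$ of order at least five with $\Phi(T)=j$ whose representation from \Cref{thm:nineteen-stability} has skeleton on $2q$ vertices. By \eqref{eq:stability-parameters}, this is the same as requiring $p=3q+1$.

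\emph{Necessity.} Let $T$ lie in the $(q,j)$ layer. \Cref{thm:nineteen-stability} gives $T\cong W(H;X)$, where $H$ is a subcubic tree on $2q$ vertices with a perfect matching and $X$ is a set of \emph{distinct} arm supports of $W(H)$ with $|X|=j$. The arm count \eqref{eq:arm-count} shows that $W(H)$ has exactly $2q+2$ arm supports. Hence
\[
 j=|X|\le 2q+2 .
\]
Distinctness is the point where subcubicity enters. An arm support in $W(H)$ already has degree two, so it can take at most one extra leaf. This was also used in the stability proof, where a support with two leaves has a single core neighbour.

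\emph{Sufficiency.} Fix $q\ge1$ and $0\le j\le\min(19,2q+2)$. We need a subcubic tree on $2q$ vertices with a perfect matching, and the path $H=P_{2q}$ works. Its matching is $\{h_1h_2,h_3h_4,\dots\}$. Choose any $j$ of the $2q+2$ arm supports of $W(P_{2q})$ as $X$. The tree $W(H;X)$ is subcubic, because each chosen support rises from degree two to degree three and no other degree changes. By \Cref{cor:loaded-parameters} it has $\Phi=j$, order $9q+2+j\ge 11$, and parameters $\gamma=4q+1$, $\mu=3q+1$.

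The converse clause of \Cref{thm:nineteen-stability} then places $W(H;X)$ in the $(q,j)$ layer. This uses $j\le19$, so that $\Phi$ lies in the stability window, together with $p=3q+1$.

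\emph{Main obstacle.} The substantive step is necessity, and it is fully carried by the uniqueness and distinctness in \Cref{thm:nineteen-stability}. The only other subtlety is confirming that $q$ is an invariant of $T$, so that the layer is well defined. It is, since $p=3q+1$ by \Cref{cor:low-slack-rigidity}. This rules out any other skeleton size that might offer more arms.
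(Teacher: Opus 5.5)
Your proposal is correct and matches the paper's proof. Necessity comes from the representation $T\cong W(H;X)$ in \Cref{thm:nineteen-stability}, where $X$ is a set of distinct arm supports, together with the arm count \eqref{eq:arm-count}; sufficiency comes from loading any $j$ of the $2q+2$ arms of $W(P_{2q})$.
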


\begin{proof}
The necessity is the arm count \eqref{eq:arm-count}.  For sufficiency, take
$H=P_{2q}$ and load any $j$ of its $2q+2$ arms.
\end{proof}

\begin{corollary}[All subcubic maximizers]
\label{cor:all-subcubic-maximizers}
Let $q\ge1$ and $0\le k\le8$.  A subcubic tree of order $9q+2+k$ attains
the all-tree maximum in \eqref{eq:exact-order-maximum} if and only if it is
$W(H;X)$ for a subcubic perfect-matching tree $H$ on $2q$ vertices and a
$k$-element set of distinct arm supports.  Such a tree exists exactly when
$k\le2q+2$.  In particular, all nine residue layers occur for every
$q\ge3$.
\end{corollary}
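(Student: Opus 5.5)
The plan is to reduce both directions to results already established: \Cref{thm:nineteen-stability}, \Cref{cor:loaded-parameters}, and \Cref{cor:stability-capacity}.

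Fix $q\ge1$ and $0\le k\le8$, and let $n=9q+2+k$. First I will check that attaining the maximum means exactly $\Phi=k$. Since $k\le8$, we have $\lfloor(n-20)/9\rfloor=\lfloor(9q-18+k)/9\rfloor=q-2$. By \Cref{thm:exact-order-maximum}, a tree $T$ of order $n$ attains the maximum if and only if $7\gamma(T)-9\mu(T)=q-2$. Substituting into the definition of $\Phi$ gives
\[
 \Phi(T)=n-20-9(q-2)=9q+2+k-20-9q+18=k .
\]
So the subcubic maximizers of order $n$ are precisely the subcubic trees of order $n$ with $\Phi(T)=k$.

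\emph{Necessity.} Since $k\le8\le19$, \Cref{thm:nineteen-stability} applies to such a $T$. The small alternatives $P_2,P_3,K_{1,3}$ have order at most $4<11\le n$, so they are excluded. Hence $T\cong W(H;X)$ with $|X|=\Phi(T)=k$, where $H$ is a subcubic perfect-matching tree on $2q'$ vertices for some $q'\ge1$. The order formula in \Cref{cor:loaded-parameters} gives $9q'+2+k=9q+2+k$, so $q'=q$.

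\emph{Sufficiency.} Conversely, \Cref{cor:loaded-parameters} shows that $W(H;X)$ with $|H|=2q$ and $|X|=k$ has order $9q+2+k$ and score $7(4q+1)-9(3q+1)=q-2$, which is the maximum. Distinctness of the loaded arm supports keeps the tree subcubic.

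\emph{Existence.} The existence criterion $k\le2q+2$ is the capacity statement \Cref{cor:stability-capacity} with $j=k$. Necessity comes from the arm count \eqref{eq:arm-count}; sufficiency comes from loading $k$ of the $2q+2$ arms of $W(P_{2q})$. For $q\ge3$ we have $2q+2\ge8\ge k$, so all nine residue layers $k=0,\dots,8$ occur.

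The only real content is the identification $\Phi=k$ and the matching of $q'$ with $q$. No step is a genuine obstacle. The point to watch is that the floor evaluation needs $0\le k\le8$, and that the small exceptional trees are excluded by order.
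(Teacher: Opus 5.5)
Your proposal is correct and takes essentially the same route as the paper. You identify maximizers with $\Phi=k$ through the floor computation, then invoke \Cref{thm:nineteen-stability}, \Cref{cor:stability-capacity}, and \Cref{cor:loaded-parameters}; your explicit exclusion of the small trees by order and the identification $q'=q$ from the order formula are details the paper leaves implicit.
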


\begin{proof}
At order $9q+2+k$, the maximum score is $q-2$, so a maximizer has
$\Phi=k$.  Apply \Cref{thm:nineteen-stability,cor:stability-capacity}.  The
converse follows from \Cref{cor:loaded-parameters}.
\end{proof}

More generally, a tree in the $j$-slack layer has score $q-2$ and order
$9q+2+j$, while the all-tree maximum at that order is
\[
 q-2+\left\lfloor\frac{j}{9}\right\rfloor.
\]
Thus the layers $0\le j\le8$ are extremal, $9\le j\le17$ are one integer
score level below extremal, and $18\le j\le19$ are two levels below.

\section{Complete classification at the structural phase boundary}
\label{sec:sharpness}

At slack twenty, the residue term $20s$ in
\eqref{eq:five-term-slack} can become active.  The pressure identity then
has one unit on its right-hand side, and that single unit permits exactly two
non-loaded operations.  This section remains entirely within the subcubic
class; the degree-three boundary is used both in the local-surplus exclusion
and in recovering the two endpoint-incidence signatures.

\begin{definition}[Two boundary defects]
\label{def:twenty-defects}
Let $H$ be a subcubic tree on $2q$ vertices with a perfect matching.
\begin{enumerate}[label=(\roman*)]
  \item If $x$ is an arm support of $W(H)$, the \emph{terminal-tail defect}
        is obtained by adjoining new vertices $u,v$ and the path $x-u-v$.
  \item If $z$ is the leafed support subdividing a nonmatching edge, let
        $a,b$ be its deep neighbours and $w$ its leaf.  The
        \emph{bridge-split defect} deletes $z,w$ and replaces them by
        \[
          a-x-y-b,\qquad x-w_x,\qquad y-w_y,
        \]
        where $x,y,w_x,w_y$ are new.
\end{enumerate}
\end{definition}

\begin{figure}[t]
\centering
\resizebox{\textwidth}{!}{\PtwoBoundaryGraphic}
\caption{The non-small structural families at slack twenty; $P_4$ is the
separate small boundary case.  Twenty distinct arm loads
continue the independent-penultimate structure.  The two non-loaded operations
create one edge in $T[P]$; the numbers of deep neighbours at its endpoints
are $\{0,1\}$ for a terminal tail and $\{1,1\}$ for a bridge split.  Red
denotes this unique support--support edge; black squares are leaves.  Panel
\textup{(a)} depicts one of the twenty distinct loaded arms.}
\label{fig:stability-boundary}
\end{figure}

\begin{theorem}[Complete slack-twenty classification]
\label{thm:slack-twenty-sharpness}
Let $T$ be a subcubic tree of order at least two.  Then $\Phi(T)=20$ if and only if either
$T\cong P_4$, or $|V(T)|\ge5$ and exactly one of the following holds.
\begin{enumerate}[label=(\Alph*)]
  \item $T\cong W(H;X)$, where $H$ is a subcubic perfect-matching tree on
        $2q$ vertices, $q\ge9$, and $X$ consists of twenty distinct arm
        supports.
  \item $T$ is a terminal-tail defect of $W(H)$ for some $q\ge1$.
  \item $T$ is a bridge-split defect of $W(H)$ for some $q\ge2$.
\end{enumerate}
In cases \textup{(B)}--\textup{(C)},
\begin{equation}
 |T|=9q+4,\qquad \gamma(T)=4q+2,\qquad
 \mu(T)=3q+2.                                        \label{eq:twenty-defect-parameters}
\end{equation}
The first orders of \textup{(A)}, \textup{(B)}, and \textup{(C)} are
$103$, $13$, and $22$, respectively.  Every slack-twenty order is congruent
to four modulo nine.  The small case $P_4$ and the three displayed families
are mutually disjoint.
\end{theorem}

\begin{proof}
For $|V(T)|\le4$, the base table in the proof of
\Cref{thm:global-order-law} shows that slack twenty occurs exactly for
$P_4$.  Hence assume $|V(T)|\ge5$.
Suppose first that $\Phi(T)=20$.  A non-reduced tree in the present domain
has slack at least twenty-one by the clean-contraction step, so $T$ is
reduced.  In the notation of \eqref{eq:five-term-slack}, write
\[
 a=\ell-p,\qquad b=d-3h.
\]
The five-term identity becomes
\begin{equation}
 20=a+b+20s+21t+60r.                                 \label{eq:twenty-split}
\end{equation}
All terms are nonnegative integers.  Hence $t=r=0$ and exactly one of
\begin{equation}
 s=0,\quad a+b=20,
 \qquad\text{or}\qquad
 s=1,\quad a=b=0                                    \label{eq:twenty-branches}
\end{equation}
holds.  In both branches
\begin{equation}
 \mu=p,\qquad h=q,\qquad \gamma=p+q.                 \label{eq:twenty-common}
\end{equation}
The case $q=0$ is excluded: in the first branch it would give a subcubic
star of order at least five, and in the second it gives order four.

We use the local data from \Cref{sec:preliminaries}.  Equations
\eqref{eq:twenty-common} and the exact domination and inertia decompositions
give
\begin{equation}
 \sum_C\delta_C=q,\qquad Q_C\succeq0.                \label{eq:twenty-local-data}
\end{equation}
Define $B_C=\sum_{v\in C}b_v$ and
\[
 \pi_C=B_C-3\delta_C-1.
\]
The packing, all-ones, and boundary argument used in
\eqref{eq:local-pressure} depends only on
\eqref{eq:twenty-local-data} and \eqref{eq:deep-boundary}; therefore
$\pi_C\ge0$.  If $c$ is the number of deep components and $e_{PP}$ counts
edges in $T[P]$, the leaf-deleted core gives
\[
 \sum_C B_C=p+c-1-e_{PP}.
\]
Since $p-1=3q+s$, we obtain the boundary pressure identity
\begin{equation}
 \sum_C\pi_C=s-e_{PP}.                               \label{eq:twenty-pressure}
\end{equation}

Suppose first that $s=0$.  The two sides of
\eqref{eq:twenty-pressure} have opposite signs, so $e_{PP}=0$ and every
$\pi_C=0$.  The local equality argument in the proof of
\Cref{thm:nineteen-stability} then makes every component the weighted path
$(2,0,2)$, whence $d=3q$.  Thus $\ell-p=20$.  Every penultimate vertex has
one or two leaves and a core neighbour, so exactly twenty distinct supports
have a second leaf.  Deleting those leaves preserves all $\delta_C$ and
$Q_C$ and produces an equality tree $W(H)$.  The affected vertices are arm
supports, proving (A).  The capacity $2q+2\ge20$ is equivalent to $q\ge9$.

It remains to analyze $s=1$.  Now
\begin{equation}
 p=\ell=3q+2,\qquad d=3q.                             \label{eq:twenty-residue-one}
\end{equation}
Equation \eqref{eq:twenty-pressure} permits $e_{PP}=0$ or $1$.  We show
that the zero-edge alternative is impossible.  If $e_{PP}=0$, exactly one
component $C_*$ has $\pi_{C_*}=1$.  From
\[
 \sum_C|C|=3q=3\sum_C\delta_C
\]
and the packing bound $|C|\ge3\delta_C$ for positive cover number, there are
no zero-cover components and equality holds componentwise.  Hence for the
exceptional component
\begin{equation}
 |C_*|=3\delta_{C_*},\qquad B_{C_*}-|C_*|=2.          \label{eq:exceptional-two}
\end{equation}
The local surplus theorem gives $\delta_{C_*}\le2$.

If $\delta_{C_*}=1$, then $C_*=P_3$.  The subcubic boundary and
$B_{C_*}=5$ force weights $(2,1,2)$, but then the target set is empty and
the cover number is zero.  If $\delta_{C_*}=2$, equality holds in
\Cref{thm:local-surplus}.  The only four-vertex tree with a perfect matching
is $P_4$, so its matching subdivision is the weighted path
$(2,0,2,2,0,2)$.  Its two adjacent internal weight-two vertices have total
degree four, again impossible.  Thus
\begin{equation}
 e_{PP}=1,\qquad \pi_C=0\quad\text{for every }C.       \label{eq:one-pp-edge}
\end{equation}
Every deep component is consequently the regular weighted path $(2,0,2)$.

Let $xy$ be the unique edge of $T[P]$, and put
\[
 \kappa(z)=|N_T(z)\cap D|.
\]
Each penultimate vertex has exactly one leaf.  Since $T$ is subcubic,
$\kappa(x),\kappa(y)\in\{0,1\}$; connectivity excludes $\{0,0\}$.

If the incidence multiset is $\{0,1\}$, delete the zero-incidence endpoint
and its leaf.  The other endpoint becomes an arm support; the deep data are
unchanged, and the resulting equality tree is $W(H)$.  Reversing this
operation is the terminal tail in (B).

If the multiset is $\{1,1\}$, let $a,b$ be the two deep neighbours.  They
lie in different deep components, since otherwise the deep path between
them together with $a-x-y-b$ would form a cycle.  Replace $x,y$ and their
leaves by one vertex $z$ adjacent to $a,b$ and to one leaf.  This preserves
every deep boundary weight and residual matrix and produces an equality
tree $W(H)$.  The new $z$ is a nonmatching-edge support; reversing the
operation is the bridge split in (C).  Such a support exists precisely when
$q\ge2$.

Conversely, twenty distinct arm loads preserve $\gamma$ and $\mu$ and add
twenty vertices, so (A) has slack twenty.  A terminal tail or bridge split
increases the order by two and the penultimate count by one while preserving
the deep targets and residual matrices.  Thus both defects satisfy
\eqref{eq:twenty-defect-parameters}; direct substitution gives
\[
 7\gamma-9\mu=q-4,\qquad \Phi=20.
\]
The base table already verifies $\Phi(P_4)=20$.  All three operations
preserve maximum degree three.  Finally, $T[P]$ is
edgeless in (A) and has one edge in (B)--(C), while its endpoint incidence
multiset distinguishes the latter two.  This proves the exhaustive and
disjoint classification.
\end{proof}

The theorem strengthens sharpness into a boundary census.  The continued
loading mechanism cannot appear before $q=9$, whereas the residue-one
mechanism begins immediately at $q=1$.  The bridge split first appears at
$q=2$, when the perfect-matching skeleton has its first nonmatching edge.
Thus the coefficient twenty marks both the first nonzero algebraic residue and
the exact point at which two graph-invariant local defects enter the theory.

\section{The first post-boundary layer}
\label{sec:slack-twenty-one}

At slack twenty-one, clean contraction and one additional negative residual
direction first become compatible with the slack identity.  We first formalize the two
exceptional reduced incidence types.  As in the preceding stability results,
the subcubic bound is indispensable: it restricts every weighted deep degree
to two or three and is the hypothesis that makes the marked Smith reduction
finite.

\begin{definition}[Exceptional incidence assemblies]
\label{def:exceptional-assemblies}
Regard each unit of a deep boundary weight $b_v$ as a labelled stub at $v$.
Fix $q\ge2$.
\begin{enumerate}[label=(\roman*)]
  \item The E0 deep forest consists of one weight-two singleton and $q$
        weighted paths $(2,0,2)$.
  \item The E1 deep forest consists of one weighted claw with centre weight
        zero and leaf weights $1,2,2$, together with $q-1$ weighted paths
        $(2,0,2)$.
\end{enumerate}
In either case introduce a set $P$ of $3q+2$ support vertices and attach one
leaf to every member of $P$.  Join deep stubs to $P$ subject to the following
rules: every stub is used exactly once; every support has incidence degree one
or two; after each deep component is contracted, the resulting simple
bipartite incidence graph on the component vertices and $P$ is a tree; and
every stub at the unique stub-bearing deep vertex of total degree two meets an
incidence-degree-two support.  No other edges are present.  The resulting
leafed tree is an \emph{E0 assembly} or \emph{E1 assembly}, respectively.
\end{definition}

The last rule is exactly reducedness.  The other total-degree-two deep
vertices are the stubless middles of ordinary $(2,0,2)$ paths and meet only
total-degree-three deep vertices.  A support of incidence degree one has total
degree two, so an adjacent deep vertex must have total degree three; at the
unique stub-bearing deficit vertex this is ensured precisely by the last
rule.

Both grammars exist for every $q\ge2$.  At $q=2$, place the E0 singleton
between two ordinary components on a component path and realize the two path
edges by incidence-degree-two supports.  For E1, place the claw at one end of
a two-component path and use its weight-one leaf stub on the connector.
Terminate every remaining stub at a distinct incidence-degree-one support.
To pass from $q$ to $q+1$, choose a terminal support at the far ordinary
component, attach it to one stub of a new ordinary component so that it
becomes a connector, and terminate the other three new stubs separately.
This adds one component and three supports and preserves every rule.

\begin{figure}[t]
\centering
\resizebox{\textwidth}{!}{\PtwoExceptionalAssembliesGraphic}
\caption{Contracted incidence schematics for the $q=2$ bases of E0 and E1.
Blue vertices display the weighted deep components; white vertices are
supports and each has one black-square leaf.  Each red two-edge chain passes through
a connector support of incidence degree two.  Representative terminal
supports are shown; every
omitted stub terminates at its own incidence-degree-one support.}
\label{fig:exceptional-assemblies}
\end{figure}

\begin{lemma}[Marked Smith reduction]
\label{lem:marked-smith}
Let $C$ be a tree with nonnegative integral boundary weights $b_v$.  Suppose
that
\[
 |C|=3\delta+1,\qquad \sum_{v\in C}b_v=|C|+1,
 \qquad 2\le \deg_C(v)+b_v\le3,
\]
that the zero-weight target set has cover number $\delta$, and that
\[
 Q_C=\diag\bigl(\deg_C(v)+b_v-1:v\in C\bigr)-A(C)\succeq0.
\]
Then either $C$ is a weight-two singleton and $\delta=0$, or $C$ is a
weighted claw whose centre has weight zero and whose leaves have weights
$1,2,2$, and $\delta=1$.
\end{lemma}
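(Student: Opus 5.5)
The plan is to convert positive semidefiniteness of $Q_C$ into a spectral-radius condition on an auxiliary tree, and then read that tree off the Smith classification. The statement already writes $Q_C$ in total-degree form, and every total degree lies in $\{2,3\}$, so
\[
 Q_C=2I-A(C)-\diag(\mathbf 1_X),
\]
where $X$ is the set of vertices of total degree two.

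First I count $X$. The total degrees sum to $2(|C|-1)+\sum_v b_v=2|C|-2+|C|+1=3|C|-1$, and each total degree is $2$ or $3$. Hence exactly one vertex $x$ has total degree two, so $X=\{x\}$.

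Next I remove the mark. Let $C^+$ be obtained by attaching two new pendant vertices at $x$. Take the form $2I-A(C^+)$ and take the Schur complement of its identity block on the two new vertices. This subtracts $\tfrac12+\tfrac12=1$ from the $x$ diagonal entry and returns exactly $Q_C$. Therefore $Q_C\succeq0$ if and only if $2I-A(C^+)\succeq0$, that is, the adjacency spectral radius of $C^+$ is at most $2$. (This is the marked Smith picture in the paper's figure.)

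By Smith's theorem, $C^+$ is then a subtree of an affine Dynkin tree. The trees that occur are $A_n$, $D_n$, $E_6,E_7,E_8$, $\widetilde D_n$ and $\widetilde E_6,\widetilde E_7,\widetilde E_8$. Since $x$ carries two pendant leaves in $C^+$, the tree $C^+$ can only be one of the following:
\begin{enumerate}[label=(\roman*)]
  \item $A_3$ with centre $x$;
  \item $D_n$ with $x$ the fork vertex;
  \item $\widetilde D_n$ ($n\ge5$) with $x$ one of the two fork vertices;
  \item $\widetilde D_4$ with $x$ the centre.
\end{enumerate}
The $E$-type trees have no vertex with two leaf neighbours other than the $D$-shaped cases already listed. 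Also $\deg_C(x)\le1$ except in case (iv), where $\deg_C(x)=2$. This step, listing the admissible $C^+$ exhaustively, is the main obstacle. I would check it directly against the list of Smith trees, noting that two pendant arms of length one at a single vertex force a $D$-type end.

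Then I read off $C$ and its weights $b_v=\text{total degree}-\deg_C(v)$, and impose $|C|=3\delta+1$.

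In case (i), $C$ is a singleton with $b_x=2$. There are no targets, so $\delta=0$ and $|C|=1$; this is the weight-two singleton.

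In case (ii), $C$ is a path with endpoint $x$. For $|C|\ge2$ the internal vertices have weight $1$, the far end has weight $2$, and $x$ has weight $1$. There are no zero-weight targets, so $\delta=0$, which forces $|C|=1$. This contradicts $|C|\ge2$, so case (ii) reduces to case (i).

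In case (iii), $C$ is a path from $x$ to a branch vertex $c$ carrying two extra leaves. Here $b_c=0$, the two terminal leaves have weight $2$, the path interior has weight $1$, and $b_x=1$. The only target is $c$, so $\delta=1$ and $|C|=4$. Hence $x$ is adjacent to $c$, and $C$ is the claw with centre weight $0$ and leaf weights $1,2,2$.

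In case (iv), $C=P_3$ with centre $x$, giving weights $(2,0,2)$ and $\delta=1$. But then $|C|=3\neq4$, so this case is rejected.

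Finally I verify that both survivors satisfy all the hypotheses, including $\sum_v b_v=|C|+1$.
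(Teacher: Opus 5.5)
Your proposal is correct and follows the paper's proof essentially step for step. It uses the same degree count isolating the unique total-degree-two vertex $x$, the same two-pendant Schur complement reducing $Q_C\succeq0$ to $\rho(A(C^+))\le2$, and Smith's classification filtered by requiring $x$ to carry two leaves. The resulting four cases ($A_3$, $D_n$, $\widetilde D_n$, $\widetilde D_4$) are then settled exactly as in the paper, by reading off the forced weights and imposing $|C|=3\delta+1$.
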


\begin{proof}
The degree sum of $C$ gives
\[
 \sum_{v\in C}\bigl(3-\deg_C(v)-b_v\bigr)
 =3|C|-(2|C|-2)-(|C|+1)=1.
\]
Thus there is a unique vertex $x$ with
$\deg_C(x)+b_x=2$; every other vertex has total degree three, and hence
\begin{equation}
 Q_C=2I-A(C)-e_xe_x^{\mathsf T}.                   \label{eq:marked-smith-q}
\end{equation}
Here $e_x$ denotes the standard basis vector indexed by $x$.
Adjoin two new leaves at $x$ and call the resulting tree $G$.  Eliminating
the two leading diagonal entries of $2I-A(G)$ gives the Schur complement in
\eqref{eq:marked-smith-q}.  Therefore
\[
 Q_C\succeq0 \quad\Longleftrightarrow\quad \rho(A(G))\le2.
\]

We now use Smith's published classification of connected graphs of
adjacency index at most two \cite{Smith1970}, in the modern displayed form
of \cite[Section~3.4 and Appendix Table~A1.1]{CvetkovicRowlinsonSimic2004}.
Because $G$ is a tree, Smith's cycle family is absent; only the finite
$A,D,E$ and affine $D,E$ trees displayed below remain.
The marking supplies a concrete filter that we record explicitly: the
vertex $x$ must be incident with at least two leaves in $G$.  In the path
family this happens only at the middle of $P_3$; in a finite $D$ tree it is
the unique fork; in an affine $D$ tree it is one of the two terminal forks;
in $\widetilde D_4=K_{1,4}$ it is the centre; and none of the six exceptional
$E$ trees has such a fork.  Deleting the marked pair therefore gives exactly
the cases in the table and in \Cref{fig:marked-smith-filter}.  The figure
records the root left by the deletion and the weights forced by
$b_v=3-\deg_C(v)$ for $v\ne x$ and $b_x=2-\deg_C(x)$.
In the standard arm descriptions of $E_6,E_7,E_8$ and their affine
extensions, every trivalent vertex has at most one pendant arm of length
one; hence no vertex can carry the two deleted leaves required at $x$.
\begin{center}
\small
\begin{tabularx}{.96\textwidth}{@{}l>{\raggedright\arraybackslash}X>{\raggedright\arraybackslash}X@{}}
\toprule
Smith family for $G$ & Admissible position of $x$ and deletion & Consequence \\
\midrule
$A_n=P_n$ & Only the middle of $P_3$; deletion leaves a rooted singleton & Only $G=P_3$ survives,
giving the weight-two singleton.\\
$D_n$ (finite snake) & The unique fork; deletion leaves an endpoint-rooted path & All boundary weights are
positive; $\delta=0$ forces $|C|=1$.\\
$\widetilde D_n$ (double snake) & Either terminal fork; deletion leaves a broom with one fork & One zero-weight
fork gives $\delta=1$, $|C|=4$, and weights $0;1,2,2$.\\
$\widetilde D_4=K_{1,4}$ & The centre; deletion leaves a three-vertex path rooted at its middle & Its order is
incompatible with $|C|=3\delta+1$.\\
\shortstack{$E_6,E_7,E_8,$\\$\widetilde E_6,\widetilde E_7,\widetilde E_8$} & No vertex is
incident with two leaves & The marked-pair condition excludes them.\\
\bottomrule
\end{tabularx}
\end{center}
\begin{figure}[t]
  \centering
  \resizebox{\textwidth}{!}{\PtwoMarkedSmithGraphic}
  \caption{The marked Smith filter.  Each panel deletes the two black-square
  leaves at the red-ringed root.  The lower labels are the forced boundary
  weights of $C$; the terminal annotation records the test outcome.
  The finite $D$ family reduces to (a); the exceptional $E$ types fail the
  marking condition.}
  \label{fig:marked-smith-filter}
\end{figure}
\begin{enumerate}[label=(\roman*)]
  \item The path case is $G=P_3$, so deleting the marked pair leaves a
        singleton $C$ of weight two.
  \item In the finite $D$-type (snake) case, deleting the marked pair leaves
        a path rooted at an endpoint.  The total-degree equations make every
        boundary weight positive.  Thus $\delta=0$, and
        $|C|=3\delta+1$ again leaves only the singleton.
  \item In the affine $D$-type (double-snake) case, deletion leaves a broom
        with one remaining fork.  That fork is the only zero-weight vertex,
        so $\delta=1$ and $|C|=4$.  Hence the marked root must be adjacent to
        the remaining fork; any longer connecting path would make $|C|>4$.
        The weights forced by total degree are $0$ at the centre and
        $1,2,2$ at the three leaves.
  \item The degree-four boundary $G=K_{1,4}$ leaves a three-vertex path
        rooted at its middle vertex.  Its order is incompatible with
        $|C|=3\delta+1$.
\end{enumerate}
The six exceptional Smith trees have no branch vertex incident with two
leaves in this marked configuration.  Direct substitution verifies the two
surviving weighted trees, which proves the lemma.
\end{proof}

\begin{lemma}[Small-diameter matching trees]
\label{lem:small-diameter-matching}
If a subcubic tree $F$ has a perfect matching and $|F|\ge8$, then
$\operatorname{diam}(F)\ge5$; consequently $F$ contains an induced $P_6$.
\end{lemma}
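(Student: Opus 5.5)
We argue by contraposition and bound the order of every subcubic tree with a perfect matching and diameter at most four.  Let $F$ be such a tree with $\operatorname{diam}(F)\le4$, and let $M$ be its perfect matching.  Choose a centre $c$ of $F$, so that every vertex lies within distance two of $c$.  Since $\deg_F(c)\le3$ and every other vertex has at most two children, $F$ has at most $1+3+6=10$ vertices.  The task is to show that the perfect matching forces at most six.

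The key point is that leaves must be matched to their unique neighbours.  Take a neighbour $u$ of $c$ that has at least one child.  All of its children lie at depth two and are therefore leaves.  Each such leaf is matched to $u$, so $u$ has at most one child.  A neighbour $u$ of $c$ with no child is itself a leaf and must be matched to $c$, so at most one neighbour of $c$ is childless.  Hence, if $c$ has $k\le3$ neighbours, at least $k-1$ of them have exactly one child, and at most one is a leaf.  In the layer picture each branch at $c$ contributes at most two vertices: either a pendant edge $u$--$w$ matched internally, or a leaf matched to $c$.

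If every branch is a pendant edge, then $c$ is unmatched, which is a contradiction.  So exactly one branch is a single leaf matched to $c$, and at most two branches are pendant edges.  This gives $|F|\le1+1+2\cdot2=6<8$.  Therefore $|F|\ge8$ forces $\operatorname{diam}(F)\ge5$.

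Finally, in a tree every diametral path is an induced path, since a chord would create a cycle.  A diameter of at least five therefore gives a path on at least six vertices, and any six consecutive vertices of it induce $P_6$.

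The only step needing care is the parity of the diameter.  If $\operatorname{diam}(F)\le3$, a single centre vertex still has radius at most two, so the count above applies unchanged.  An odd diameter of at most three only shrinks the tree further.
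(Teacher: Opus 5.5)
Your proof is correct and follows essentially the same route as the paper. Leaves must be matched to their supports, so the centre's partner is a leaf and every other branch is at most a pendant edge; this gives $|F|\le 1+1+2\cdot 2=6$, and a geodesic then supplies the induced $P_6$. The only difference is cosmetic: you handle all diameters at most four uniformly through a vertex of eccentricity at most two, whereas the paper treats diameter at most three (stars and double stars, largest case $P_4$) separately before rooting diameter-four trees at their unique centre.
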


\begin{proof}
For diameter at most three, $F$ is a star or a double star.  Every leaf must
be matched to its support, so each central vertex supports at most one leaf;
the largest possibility is $P_4$.  For diameter four, root $F$ at its unique
central vertex $c$.  The matching pairs $c$ with one neighbour, and that
neighbour can have no child.  Each of the at most two remaining branches has
at most one depth-two child, since all its leaves must be matched to the same
branch vertex.  Hence $|F|\le1+1+2\cdot2=6$.  Thus order at least eight
forces diameter at least five, and a geodesic supplies an induced $P_6$.
\end{proof}

\begin{theorem}[Complete slack-twenty-one classification]
\label{thm:slack-twenty-one}
Let $T$ be a subcubic tree of order at least two.  Then $\Phi(T)=21$ if and only if exactly one of
the following holds.
\begin{enumerate}[label=(\Alph*)]
  \item $T$ is nonreduced and some (equivalently, every) clean quotient $T'$
        satisfies
        \[
          \Phi(T')=0,\qquad \gamma(T)=\gamma(T')+1,
          \qquad \mu(T)=\mu(T')+1.
        \]
  \item $T=W(H;X)$, where $H$ has $2q$ vertices, $q\ge10$, and
        $|X|=21$.  This family first occurs at order $113$.
  \item Either $T$ is the loaded-$P_4$ order-five boundary, or $T$ is
        obtained by adding a second leaf at any degree-two penultimate vertex
        of a terminal-tail defect with $q\ge1$ or a bridge-split defect with
        $q\ge2$.
  \item $T$ is an E0 or E1 assembly with $q\ge2$.  These two types first
        occur at order $23$.
\end{enumerate}
In \textup{(C)}--\textup{(D)}, with $q=0$ for the loaded-$P_4$ boundary,
\[
 |T|=9q+5,\qquad \gamma(T)=4q+2,
 \qquad \mu(T)=3q+2.
\]
The four branches are disjoint, and no reduced spectral-excess type occurs.
\end{theorem}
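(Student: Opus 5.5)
Split first on reducedness.  If $T$ is nonreduced and $T'$ is a clean quotient, the computation in the proof of \Cref{thm:global-order-law} gives
\[
 \Phi(T)=\Phi(T')+21+9\cdot 7\bigl(\gamma(T')+1-\gamma(T)\bigr),
\]
using $\mu(T)=\mu(T')+1$ exactly from \eqref{eq:clean-contraction-input}.  So $\Phi(T)=21$ holds if and only if $\Phi(T')=0$ and $\gamma(T)=\gamma(T')+1$.  Since every clean quotient of $T$ satisfies this identity, ``some'' and ``every'' are equivalent, which gives (A).  Reduced trees have $\Phi\equiv$ the five-term identity, so (A) is disjoint from the rest.

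For reduced $T$, write $21=a+b+20s+21t+60r$.  This leaves three budgets: $(s,t,a+b)=(0,0,21)$, $(1,0,1)$, or $(0,1,0)$.

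In the first budget, the pressure identity \eqref{eq:twenty-pressure} reads $\sum\pi_C=-e_{PP}$.  The argument of \Cref{thm:nineteen-stability} then applies verbatim and yields (B), with capacity $2q+2\ge21$, i.e.\ $q\ge10$ and order $9\cdot10+2+21=113$.

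In the budget $s=1$, $a+b=1$, we have $p=3q+2$ and $\sum\pi_C=1-e_{PP}$.
\begin{itemize}
\item[$\bullet$] If $a=1,b=0$, then $d=3q$.  The analysis of \Cref{thm:slack-twenty-sharpness} runs unchanged: the $e_{PP}=0$ exceptional component is excluded as there, so $e_{PP}=1$ and all deep components are $(2,0,2)$.  Removing the unique second leaf (at a degree-two penultimate vertex, by subcubicity) gives a slack-twenty defect of type (B) or (C), or $P_4$ when $q=0$.  This is (C).
\item[$\bullet$] If $a=0,b=1$, then $d=3q+1$.  Exactly one component has $|C|=3\delta_C+1$.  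Now I must show $e_{PP}=0$ and that this component has pressure one with $B_C=|C|+1$.  The pressure sum combined with the size excess should force $B_{C_*}=3\delta+2=|C_*|+1$ for the defective component and $(2,0,2)$ for the others.  \Cref{lem:marked-smith} then leaves the weight-two singleton or the $0;1,2,2$ claw.  Reading off the incidence structure, with reducedness giving the last rule, yields exactly the E0/E1 grammars of \Cref{def:exceptional-assemblies}, which is (D).  I still need to exclude the alternative $e_{PP}=1$ with $b=1$ via total-degree counting.
\end{itemize}
Parameters in (C)--(D) follow from $p=3q+2$, $\gamma=p+q$, $\mu=p$.

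The main obstacle is the spectral budget $t=1$, $a=b=s=r=0$, which must be shown empty.  Here $p=3q+1$, $d=3h$ with $h=q+1$, and exactly one component has $n_-(Q_C)=1$.  The plan is as follows.
\begin{enumerate}
\item Use packing equality $|C|=3\delta_C$ in every component, together with the pressure identity (now $B$-sums are unbounded below by PSD only for non-exceptional components), to show that every component is a perfect packing with a suppressible matching, as in \Cref{thm:local-surplus}.
\item Conclude that the Schur complement of the exceptional component is $L(F)+\diag(b_v-2)$ for a matching tree $F$, and that pressure forces $b_v=2$-like weights.  The complement is then close to $L(F)$ shifted so that it contains $I-A(F)$-type structure.
\item For $|F|\ge8$, \Cref{lem:small-diameter-matching} gives an induced $P_6$, and interlacing with $I-A(P_6)$ (which has two negative eigenvalues) gives $n_-\ge2$, a contradiction.
\item Check the small cases $|F|\le6$ directly.
\end{enumerate}
Finally, converses are substitutions: leaf inflation for (C), and the bundle/domination decompositions applied to the E0/E1 forests, whose $Q_C$ are PSD by \Cref{lem:marked-smith}.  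The first order $23=9\cdot2+5$ comes from the explicit $q=2$ bases.
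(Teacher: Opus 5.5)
Your skeleton matches the paper's: the contraction identity for (A), the three coordinate rows, pressure rigidity for (B), the slack-twenty exclusion and the marked Smith reduction for the residue row. However, your plan for the spectral row $s=0$, $t=1$, $a=b=r=0$ has a genuine gap at step 4.

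The small cases cannot be ``checked directly'' at the component level. For $F=P_4$ (the weighted path $(2,0,1,1,0,2)$) and for $F=S_{1,2,2}$, the block $Q_C\cong I_{\delta_C}\oplus(I-A(F))$ has exactly one negative direction, so these components are locally admissible. The $S_{1,2,2}$ component is precisely the atom $S^\star$ that really occurs at $\Phi=41$. What excludes them is the global pressure identity, which you never compute:
\begin{itemize}
\item Packing equality ($d=3q+3=3\sum_C\delta_C$ forces $|C|=3\delta_C>0$ in every component) makes each unpacked vertex adjacent to a packed total-degree-two vertex. Reducedness and subcubicity then give it total degree exactly three.
\item Hence $B_C=2\delta_C+2$ and $\pi_C=1-\delta_C$. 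The core edge count $\sum_CB_C=p-1+c-e_{PP}$ with $p=3q+1$ gives $\sum_C\pi_C=-3-e_{PP}$.
\item Every positive-semidefinite block has $\rho(A(F_C))\le1$, so $F_C=P_2$ and $\pi_C=0$.
\item Therefore the unique negative component satisfies $1-\delta_E\le-3-e_{PP}$, i.e.\ $\delta_E\ge4$ and $|F_E|\ge8$. Only then does your induced-$P_6$ step close the case.
\end{itemize}
At $s=1$ the same sum is $-2-e_{PP}$, which is exactly why $\delta_E=3$ survives there.

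Your step 2 also invokes the wrong mechanism. The Schur complement $L(F)+\diag(b_v-2)$ equals $I-A(F)$ because $\deg_C(v)+b_v=3$ at every unpacked vertex. It is not that pressure pushes the weights toward two.

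There are smaller holes in the residue row.
\begin{itemize}
\item For $(a,b)=(0,1)$, the alternative $e_{PP}=1$ is excluded by zero-pressure rigidity, not by degree counting. If every $\pi_C=0$, then $3\delta_C\le|C|\le B_C=3\delta_C+1$. The case $|C|=B_C$ would put the all-ones vector in $\ker Q_C$, forcing every $b_v=1$, hence $\delta_C=0$ and $|C|=B_C=1$, a singleton of total degree one. So $|C|=3\delta_C$ everywhere and $b=0$, a contradiction.
\item For (D) you must prove $q\ge2$, not merely exhibit $q=2$ bases. The connector counts $B-p$ are $q$ for E0 and $q-1$ for E1. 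Reducedness requires two distinct connectors at the E0 singleton and one at the weight-one stub of the E1 claw, and this fails for $q\le1$. That bound is also what makes $23$ the first order.
\item Disjointness of (A) from (C)--(D) needs the easy check that the (C) and (D) trees are reduced. For (C), leaf inflation at a degree-two support preserves reducedness.
\end{itemize}
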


\begin{proof}
The base table in the proof of \Cref{thm:global-order-law} has no
slack-twenty-one tree of order at most four.  Hence any tree under
consideration has order at least five.
Assume first that $\Phi(T)=21$.  If $T$ is nonreduced, then for any clean
quotient write
$k=\gamma(T')+1-\gamma(T)\ge0$.  The exact shifts under contraction give
\[
 \Phi(T)=\Phi(T')+21+63k.
\]
Since $\Phi(T')\ge0$, slack twenty-one forces $\Phi(T')=k=0$, which is
\textup{(A)}.  Henceforth assume that $T$ is reduced.

Use the coordinates of \eqref{eq:five-term-slack}.  The integer equation
\[
 21=a+b+20s+21t+60r
\]
has exactly three solutions:
\[
 \begin{array}{c|cccc}
 &s&t&a+b&r\\ \hline
 R0&0&0&21&0\\
 R1&1&0&1&0\\
 R2&0&1&0&0.
 \end{array}
\]
In $R0$ and $R1$, the exact decompositions give
$\sum_C\delta_C=q$ and $Q_C\succeq0$.  Thus the local pressure
$\pi_C=B_C-3\delta_C-1$ is nonnegative, while the leaf-deleted core gives
\begin{equation}
 \sum_C\pi_C=s-e_{PP}.                              \label{eq:twenty-one-pressure}
\end{equation}

In $R0$, \eqref{eq:twenty-one-pressure} forces $e_{PP}=0$ and zero
pressure in every component.  The equality argument from
\Cref{thm:nineteen-stability} gives $q$ weighted paths $(2,0,2)$ and
$a=21$.  If $q=0$, then $p=1$, $d=0$, and $\ell=22$, so $T$ would be a
non-subcubic star.  Thus $q\ge1$.  Deleting the twenty-one second leaves
recovers $W(H)$, and the arm
capacity $2q+2\ge21$ is equivalent to $q\ge10$.

In $R1$, first suppose $e_{PP}=1$.  Every component again has zero pressure,
so $(a,b)=(1,0)$.  Deleting the unique excess leaf produces a slack-twenty
tree; equivalently, $T$ has a unique double-leaf support and we delete either
one of its two indistinguishable leaves.  The two leaves have the same open
neighbourhood, so exchanging them is an automorphism and either deletion
produces the same unlabelled tree.  For $q\ge1$, the resulting tree is
reduced: if the altered support meets another penultimate vertex, connectivity
forces that other endpoint to have a deep neighbour and hence degree three;
if it meets a deep vertex, zero-pressure rigidity makes that vertex an
endpoint of a $(2,0,2)$ path with total degree three.  No other degree
changes.  Thus \Cref{thm:slack-twenty-sharpness} identifies the base as either a
terminal-tail or a bridge-split defect.  The double-leaf support had degree
two before the deleted leaf was restored.  Conversely, a second leaf may be
added at any degree-two penultimate vertex of either defect: leaf inflation
preserves $\gamma$ and $\mu$, raises $\Phi$ by one, and preserves
subcubicity and reducedness.  In a terminal tail this includes both the
degree-two endpoint of its $P$--$P$ edge and every untouched ordinary arm;
in a bridge split the two $P$--$P$ endpoints have degree three, but every
untouched ordinary arm remains available.  Their respective endpoint
deep-incidence signatures $\{0,1\}$ and $\{1,1\}$ also prove that the two
subfamilies are disjoint.  At $q=0$, deletion gives $P_4$ and restoration
gives the unique reduced order-five boundary tree.  This proves (C).

Suppose next that $e_{PP}=0$.  One component $E$ has pressure one and all
others are ordinary weighted paths.  The alternative $(a,b)=(1,0)$ would
force $|E|=3\delta_E$ and $B_E=3\delta_E+2$.  The local surplus theorem
leaves $\delta_E\in\{1,2\}$; these give respectively the target-free path
$(2,1,2)$ and a subdivided $P_4$ with adjacent total-degree-four vertices,
both impossible.  Hence $(a,b)=(0,1)$ and
\begin{equation}
 |E|=3\delta_E+1,\qquad B_E=|E|+1.                  \label{eq:e37-exceptional}
\end{equation}
The hypotheses of \Cref{lem:marked-smith} now hold for $E$, so $E$ is either
the weight-two singleton ($\delta_E=0$) or the weighted claw
$(0;1,2,2)$ ($\delta_E=1$).
Reattaching their boundary stubs gives exactly the assemblies in
\Cref{def:exceptional-assemblies}.  In E0 the identity
$B-p=(4q+2)-(3q+2)=q$ gives $q$ incidence-degree-two supports.  The two
singleton stubs must use two distinct such supports, so $q\ge2$.  In E1,
$B-p=(4q+1)-(3q+2)=q-1$, and its one deficit stub requires a connector, again
forcing $q\ge2$.  The explicit path constructions following the definition
prove sufficiency for every such $q$.

Finally consider $R2$.  Here
\[
 p=\ell=3q+1,\quad d=3q+3,\quad
 \sum_C\delta_C=q+1,\quad \sum_C n_-(Q_C)=1.
\]
For $\delta_C>0$, \Cref{lem:partial-packing} gives
$|C|\ge3\delta_C$, whereas $\delta_C=0$ contributes the strict surplus
$|C|>0$.  Since $d=\sum_C|C|=3q+3=3\sum_C\delta_C$, no zero-cover component
exists and equality holds componentwise: $|C|=3\delta_C$.
Suppressing the packed degree-two vertices produces a
subcubic tree $F_C$ on $2\delta_C$ vertices with a perfect matching.  Indeed,
packing equality makes the closed neighbourhoods of the packed total-degree-two
vertices partition $C$, so every other vertex meets exactly one packed vertex;
reducedness then forces every such outside vertex to have total degree three.
This also supplies the boundary count used below.  The
residual block satisfies
\begin{equation}
 Q_C\cong I_{\delta_C}\oplus(I-A(F_C)).             \label{eq:e37-suppression}
\end{equation}
Packing equality also gives $B_C=2\delta_C+2$ and hence
$\pi_C=1-\delta_C$.  The core edge count is
\[
 \sum_C\pi_C=-3-e_{PP}.
\]
Exactly one residual block, say $Q_E$, has one negative direction; all other
blocks are positive semidefinite and have nonnegative pressure.  Therefore
$1-\delta_E\le-3-e_{PP}$, so $\delta_E\ge4$.  Its suppressed tree has at
least eight vertices and therefore contains an induced $P_6$ by
\Cref{lem:small-diameter-matching}.  Since $P_6$ has
two adjacency eigenvalues greater than one
($\lambda_2(P_6)=2\cos(2\pi/7)>1$), interlacing and
\eqref{eq:e37-suppression} force at least two negative directions, a
contradiction.  Thus $R2$ is empty.

Conversely, suppose that one of \textup{(A)}--\textup{(D)} holds.  In
\textup{(A)}, a clean contraction lowers the order by three and the assumed
parameter shifts give
\[
 \Phi(T)=\Phi(T')+21=21.
\]
Thus the existence of one such quotient implies slack twenty-one, and the
forward argument then shows that every clean quotient has the same property.
In \textup{(B)}, \Cref{cor:loaded-parameters} gives $\Phi(T)=|X|=21$.
Every tree in \textup{(C)} is a one-leaf inflation of a slack-twenty tree
(including $P_4$ at the boundary), so \Cref{lem:leaf-inflation} raises
$\Phi$ from twenty to twenty-one.

For \textup{(D)}, the incidence rules give $p=\ell=3q+2$ and
$d=3q+1$.  The E0 forest has cover sum $q$, and the E1 forest has cover sum
$1+(q-1)=q$.  Every residual block is positive semidefinite: ordinary paths
have the zero-negative block from the equality case, the E0 singleton has
$Q=[1]$, and direct substitution verifies the E1 claw.  Hence
\[
 \gamma=p+q=4q+2,\qquad \mu=p=3q+2,
\]
so $|T|=2p+d=9q+5$ and direct substitution gives $\Phi(T)=21$.
The incidence definition also gives subcubicity and reducedness.

Finally, \textup{(A)} is nonreduced and the other branches are reduced.
Among the reduced branches, \textup{(C)} has $e_{PP}=1$, whereas
\textup{(B)} and \textup{(D)} have $e_{PP}=0$; the latter two are separated
by $\ell-p=21$ versus $0$.  The terminal-tail and bridge-split subfamilies
inside \textup{(C)} retain signatures $\{0,1\}$ and $\{1,1\}$, and E0/E1
inside \textup{(D)} have different exceptional deep-component profiles.
Thus all branches are disjoint, completing the equivalence.
\end{proof}

\begin{remark}[Capacity thresholds]
The lower bounds on $q$ come from different integer capacities.  A
twenty-one-load lift needs $2q+2\ge21$, hence $q\ge10$.  An E0 assembly needs
two connectors for the two stubs of its exceptional singleton, while E1
needs at least one connector beyond its $q-1$ connector count; both therefore
start at $q=2$.  Terminal-tail inflation starts at $q=1$, bridge-split
inflation at $q=2$, and the loaded $P_4$ is the separate $q=0$ boundary.
\end{remark}

The theorem identifies a second phase change.  Slack twenty introduces a
single edge inside the penultimate set; slack twenty-one permits clean
expansion and two exceptional weighted components, but packing equality
still excludes the first genuinely spectral defect.

\section{Local compactness of weighted deep components}
\label{sec:local-compactness}

The classifications above repeatedly isolate a finite exceptional weighted
component and an unbounded collection of ordinary components.  We now prove
that this is not an accident of the first few layers.  The next theorem is
the compactness mechanism behind every fixed-slack statement in the rest of
the paper.

Let $C$ be a tree with integral boundary weights $b_v\ge0$ such that
$\deg_C(v)+b_v\ge2$ for every vertex.  Put
\[
 Q_C=L(C)+\diag(b_v-1:v\in C),\qquad
 R_C=\{v:b_v=0\}.
\]
Let $\delta_C$ be the minimum number of closed neighbourhoods in $C$ that
cover $R_C$, and define
\[
 z_C=|C|-3\delta_C,\qquad
 \eta_C=n_-(Q_C),\qquad
 \chi_C=\sum_{v\in C}(\deg_C(v)+b_v-3)_+.
\]
The reducedness condition used below is precisely that the vertices of total
degree two form an independent set.

\begin{theorem}[Local compactness]
\label{thm:local-compactness}
Under the preceding hypotheses, assume in addition that the vertices of
total degree two form an independent set.  Then $z_C\ge0$, and $|C|$ is
bounded by a function of $(z_C,\eta_C,\chi_C)$ alone.  More explicitly, set
\[
 \Delta_C=3+\chi_C,
 \qquad D_C=3(\eta_C+4z_C+\chi_C)+1.
\]
Then one may take
\begin{equation}
 |C|\le \frac32\left(
  1+\Delta_C\frac{(\Delta_C-1)^{D_C}-1}{\Delta_C-2}
 \right).                                           \label{eq:local-moore}
\end{equation}
In the subcubic case, with $w=\eta_C+4z_C$, the simpler bound
\begin{equation}
 |C|\le\frac32\bigl(3\cdot2^{3w+1}-2\bigr)          \label{eq:local-subcubic-bound}
\end{equation}
holds.  Moreover,
\begin{equation}
 (z_C,\eta_C,\chi_C)=(0,0,0)
 \quad\Longleftrightarrow\quad
 C\text{ is the weighted path }(2,0,2).             \label{eq:ordinary-characterization}
\end{equation}
\end{theorem}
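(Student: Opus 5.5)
We follow the mechanism sketched in the introduction: suppress a maximum distance-three packing, bound the diameter through interlacing, and close with a Moore bound.

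\emph{Packing and suppression.} First, $z_C\ge0$. If $\delta_C=0$ this is trivial. Otherwise take a maximum packing $Z\subseteq R_C$ from \Cref{lem:partial-packing}. Every $z\in Z$ has $b_z=0$ and total degree at least two, hence $|N_C[z]|\ge3$. These closed neighbourhoods are pairwise disjoint, so $|C|\ge3|Z|=3\delta_C$.

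The surplus $z_C$ bounds two kinds of excess. The number of vertices outside $\bigcup_{z}N_C[z]$ is at most $z_C$. The number of packed vertices with $\deg_C(z)\ge3$ is also at most $z_C$, since each such vertex contributes at least one extra vertex to $\sum|N_C[z]|$. For every packed $z$ of degree exactly two we perform the Schur elimination of \Cref{thm:local-surplus}. Such a $z$ has $Q_{zz}=1$, and eliminating it replaces $z$ by an edge between its two neighbours. This edge joins distinct neighbourhoods and is shifted by $-1$ on the diagonal. The result is a congruence $Q_C\cong I\oplus M$, where $M$ lives on the remaining vertices $O$ and the suppressed tree is $F$. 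The suppressed edges form a matching in $F$ that misses at most $O(z_C)$ vertices.

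\emph{Rank-perturbation form.} On a vertex $v\in O$ covered by one packed neighbour, $M_{vv}=\deg_C(v)+b_v-1-1$. This equals $\deg_F(v)+b_v-1$ up to the combinatorial shift, so $M=I-A(F)+E$. Here $E$ is diagonal, and $E_{vv}$ is a nonnegative quantity controlled by the total-degree excess and by the boundary weights at $v$.

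I would then show that $E$ is supported on at most $c(z_C+\chi_C)$ coordinates, apart from terms that are positive semidefinite. The set of deficient vertices, those with total degree two, is independent by hypothesis. Each of them is either packed or adjacent to at most two packed vertices, and the leftover count is again bounded by $z_C$. Consequently
\[
n_-(I-A(F)) \le \eta_C+\operatorname{rank}(E_{\text{bad}}) \le \eta_C+4z_C+\chi_C .
\]
I expect this bookkeeping to be the main obstacle: identifying precisely which diagonal corrections are negative, and charging each one to a unit of $z_C$ or $\chi_C$. The constant $4$ and the form of $D_C$ suggest the intended charging. I would first try to charge each unpacked vertex and each extra packed-neighbourhood vertex at most four corrections.

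\emph{Diameter and Moore bound.} If $F$ contains an induced path $P_m$, Cauchy interlacing gives $n_-(I-A(F))\ge\#\{j:2\cos(j\pi/(m+1))>1\}=\lceil m/3\rceil-1$. Hence the diameter of $F$ is at most $3(\eta_C+4z_C+\chi_C)+1=D_C$. Maximum degree in $F$ is at most $\Delta_C=3+\chi_C$, so the Moore bound gives
\[
|F|\le 1+\Delta_C\bigl((\Delta_C-1)^{D_C}-1\bigr)/(\Delta_C-2).
\]
Since $|C|=|F|+|Z|$ and $|Z|\le|F|/2+O(z_C)$, absorbing constants yields \eqref{eq:local-moore}. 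Setting $\chi_C=0$, $\Delta_C=3$ gives \eqref{eq:local-subcubic-bound}.

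\emph{Characterizing the ordinary tile.} For \eqref{eq:ordinary-characterization}, assume $(z,\eta,\chi)=(0,0,0)$. Then the packing partitions $C$ and every vertex has total degree at most three. By the Schur complement in the proof of \Cref{thm:local-surplus} we get $M=L(F)+\diag(b_v-2)\succeq0$. Every $v\in O$ has total degree three with exactly one packed neighbour, so $b_v=2-\deg_F(v)+1$. This forces $M=2I-A(F)-\diag(\cdot)$, which is of the type in \Cref{lem:marked-smith}. Independence of total-degree-two vertices makes every vertex of $F$ have $\deg_F\le1$. Thus $F=P_2$ and $C$ is the path $(2,0,2)$. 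The converse is the direct computation in \Cref{prop:lift-parameters}.
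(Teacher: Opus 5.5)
Your outline is the paper's route (packing, suppression of the degree-two packed targets, a diagonal perturbation of $I-A(F)$, interlacing on a geodesic, Moore bound). However, the step that carries the quantitative content, namely $E\succeq0$ with $\operatorname{rank}E\le 4z_C+\chi_C$, is left unproved, and the plan you sketch for it points the wrong way. You know $n_-(I-A(F)+E)=\eta_C$, and you reach $I-A(F)$ by \emph{subtracting} $E$. Subtracting a positive semidefinite matrix of rank $r$ can create up to $r$ new negative directions. So the positive corrections must be counted in full and cannot be set aside as ``positive semidefinite terms''; negative corrections would be harmless. Likewise, the total-degree-two vertices you propose to charge are not where $E$ is supported.

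The missing computation is short. Eliminating $z\in Z_2$ with neighbours $v,v'$ lowers the diagonal at $v$ by one and leaves $\deg_F(v)=\deg_C(v)$. Hence $E_{vv}=\deg_C(v)+b_v-3$ at every matched endpoint. This is nonnegative because the independence hypothesis gives a neighbour of a total-degree-two vertex total degree at least three, and it is positive at no more than $\chi_C$ such vertices. Every other vertex of $F$ keeps $E_{vv}=\deg_C(v)+b_v-2\ge0$.

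The constant $4$ then comes from an exact count you did not make. We have $|M|=|Z_2|=\delta_C-|Z_+|$ and $|F|=|C|-|Z_2|=2\delta_C+z_C+|Z_+|$. So exactly $z_C+3|Z_+|\le4z_C$ vertices of $F$ are unmatched: each $x\in Z_+$ survives in $F$ together with its at least three neighbours. This gives $E\succeq0$ and $\operatorname{rank}E\le4z_C+\chi_C$, hence $n_-(I-A(F))\le\eta_C+4z_C+\chi_C$.

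Three further steps need correcting.
\begin{itemize}
\item The number of adjacency eigenvalues of $P_m$ exceeding one is $\lfloor m/3\rfloor$, not $\lceil m/3\rceil-1$. The two differ when $3\mid m$; for example, $P_3$ already has the eigenvalue $\sqrt2$. Your formula would only give $\operatorname{diam}F\le D_C+1$, not $D_C$.
\item In the final count, $|C|=|F|+|Z_2|$, not $|F|+|Z|$. The bound $|Z_2|=|M|\le|F|/2$ holds exactly because $M$ is a matching of $F$. This is what yields the factor $\tfrac32$ with no additive term; ``absorbing constants'' cannot deliver the stated explicit inequality.
\item In the zero-triple case you correctly obtain $M=I-A(F)\succeq0$, but independence of the total-degree-two vertices does not imply $\deg_F\le1$. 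Positive semidefiniteness does: a vertex of degree at least two in the tree $F$ gives an induced $P_3$ with eigenvalue $\sqrt2>1$. The paper instead uses an induced $P_4$ in a perfect-matching tree of order at least four. No appeal to the marked Smith lemma is needed.
\end{itemize}
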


\begin{proof}
By the partial domination--packing duality from
\Cref{sec:preliminaries}, choose a distance-three packing
$Z\subseteq R_C$ of size $\delta_C$.  Every target has internal degree at
least two.  If $Z_2$ is the set of degree-two packed targets and
$Z_+=Z\setminus Z_2$, disjointness of the closed neighbourhoods gives
\begin{equation}
 3\delta_C+
 \sum_{x\in Z}(\deg_C(x)-2)
 \le |C|=3\delta_C+z_C.                             \label{eq:packing-budget}
\end{equation}
Thus $z_C\ge0$, $|Z_+|\le z_C$, and at most
\[
 z_C-\sum_{x\in Z}(\deg_C(x)-2)
\]
vertices lie outside the packed closed neighbourhoods.

Each member of $Z_2$ has diagonal entry one in $Q_C$.  Reducedness says its
two neighbours have total degree at least three, while packing says the
neighbour pairs for distinct members of $Z_2$ are disjoint.  Eliminate the
$Z_2$ coordinates.  Graphically this suppresses every vertex of $Z_2$ and
produces a tree $F$ in which the new edges form a matching $M$.  We have
\[
 |M|=|Z_2|=\delta_C-|Z_+|,
 \qquad |F|=2\delta_C+z_C+|Z_+|.
\]
Consequently the number of vertices of $F$ not covered by $M$ is
\begin{equation}
 |F|-2|M|=z_C+3|Z_+|\le4z_C.                       \label{eq:unmatched-budget}
\end{equation}

Schur complementation gives a congruence
\begin{equation}
 Q_C\cong I_{|Z_2|}\oplus\bigl(I-A(F)+D\bigr),     \label{eq:compact-schur}
\end{equation}
where $D\succeq0$ is diagonal.  Relative to the baseline identity, $D$ can
be supported only on the unmatched vertices counted in
\eqref{eq:unmatched-budget} and on matched endpoints whose total degree is
greater than three.  Therefore
\begin{equation}
 \operatorname{rank}D\le4z_C+\chi_C,
 \qquad \Delta(F)\le3+\chi_C.                      \label{eq:rank-degree-bounds}
\end{equation}

Write
\[
 \nu_1(F)=n_-(I-A(F))
 =|\{\lambda\in\operatorname{Spec}A(F):\lambda>1\}|.
\]
A positive-semidefinite perturbation of rank $r$ removes at most $r$
negative directions.  From \eqref{eq:compact-schur} and
\eqref{eq:rank-degree-bounds},
\begin{equation}
 \nu_1(F)\le \eta_C+4z_C+\chi_C.                  \label{eq:index-budget}
\end{equation}
If the diameter of $F$ is $D_F$, a geodesic is an induced
$P_{D_F+1}$.  Interlacing and
$\nu_1(P_m)=\lfloor m/3\rfloor$ yield $D_F\le D_C$.  Applying the ordinary
Moore bound with maximum degree $\Delta_C$ bounds $|F|$ by the parenthesis
in \eqref{eq:local-moore}.  Because the suppressed edges form a matching,
$|C|=|F|+|M|\le3|F|/2$, proving \eqref{eq:local-moore}.  When
$\chi_C=0$, breadth-first expansion gives
$|F|\le1+3\sum_{j=0}^{D_F-1}2^j=3\cdot2^{D_F}-2$, which proves
\eqref{eq:local-subcubic-bound}.

It remains to identify the zero triple.  If
$(z_C,\eta_C,\chi_C)=(0,0,0)$, every packed target has degree two, $M$ is a
perfect matching of $F$, and
\begin{equation}
 Q_C\cong I_{\delta_C}\oplus(I-A(F)),
 \qquad n_-(I-A(F))=0.                              \label{eq:zero-tile-schur}
\end{equation}
If $|F|\ge4$, then a tree with a perfect matching is not a star and contains
an induced $P_4$.  Since $P_4$ has an adjacency eigenvalue greater than one,
interlacing contradicts \eqref{eq:zero-tile-schur}.  Hence $F=P_2$; its
matching edge has one suppressed target, so $C=P_3$.  Total degree three at
the endpoints and total degree two at the centre force weights $(2,0,2)$.
The converse follows by direct calculation: this weighted path has
$\delta_C=1$, $z_C=\chi_C=0$, and inertia $(2,0,1)$ for $Q_C$.
\end{proof}

\begin{remark}
The bounds are deliberately coarse.  Their purpose is to convert fixed
slack into finiteness, not to provide a practical enumeration bound.  The
$\chi_C$ extension will be used only for the bounded excess degree created
by clean contractions; it is not a classification for unrestricted maximum
degree.
\end{remark}

\section{The fixed-slack finite-defect theorem}
\label{sec:finite-kernels}

This is the structural culmination of the paper.  The explicit phase lists
that follow are applications demonstrating that the theorem can be made
concrete; the general conclusion here is not limited to any atlas cutoff.

For a reduced tree, retain the canonical partition
$V(T)=L\sqcup P\sqcup D$.  For each weighted deep component $C$, write
\[
 \delta_C=\gamma(C;R_C),\quad z_C=|C|-3\delta_C,
 \quad\eta_C=n_-(Q_C),\quad
 \pi_C=B_C-3\delta_C-1.
\]
Call the weighted path
\[
 \mathcal O=(2,0,2)
\]
the \emph{ordinary deep tile}.

\begin{definition}[Ported defect kernel]
A support event is exceptional if the support has more than one leaf, is
incident with a support--support edge, has degree greater than three in a
clean quotient, or is adjacent to a nonordinary deep tile.  A ported defect
kernel records all nonordinary deep tiles, all exceptional support events,
their incidences, and labelled half-edges leading into the remaining
ordinary incidence forest.  Kernel isomorphism preserves weights, leaf
multiplicities, and port labels but ignores the size and shape of the
ordinary lift forest attached at the ports.
\end{definition}

Outside the kernel there are only copies of $\mathcal O$ and one-leaf
supports of degree at most three.  Under the inverse construction of
\Cref{sec:equality}, this is a partial matched-skeleton lift forest.

\begin{figure}[t]
  \centering
  \resizebox{\textwidth}{!}{\PtwoKernelDecompositionGraphic}
  \caption{The ported-kernel decomposition.  At fixed slack, every
  nonordinary deep component and exceptional support event is absorbed into
  a bounded labelled core.  Only compatible partial matched-skeleton lift
  forests may continue through its ports (the short continuation fragments drawn
  outside the red kernel box).  Three representative ports are drawn; in
  general they are $p_1,\ldots,p_r$.  Nonreduced trees
  require at most $\lfloor k/21\rfloor$ reverse clean expansions.}
  \label{fig:ported-kernel}
\end{figure}

\begin{theorem}[Fixed-slack finite defects]
\label{thm:fixed-slack-kernels}
For every integer $k\ge0$, there is an effectively computable finite set
$\mathcal K_k$ of ported defect-kernel types such that every subcubic tree
$T$ with $\Phi(T)=k$ is obtained by
\begin{enumerate}[label=\textup{(\roman*)}]
 \item choosing a kernel in $\mathcal K_k$;
 \item attaching arbitrary compatible partial matched-skeleton lift forests
       at its ports; and
 \item applying at most $\lfloor k/21\rfloor$ clean three-edge expansions.
\end{enumerate}
For reduced trees the last step is omitted.  Thus every fixed-slack
subcubic layer has only finitely many local defect types.
\end{theorem}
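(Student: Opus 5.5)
We first strip clean expansions and then bound, for a reduced tree, every quantity that the kernel records. Let $T$ be subcubic with $\Phi(T)=k$. If $T$ is nonreduced, contract a clean three-edge path. By the interface \eqref{eq:clean-contraction-input}, $\Phi(T)\ge\Phi(T')+21$. Iterating, after $j\le\lfloor k/21\rfloor$ contractions we reach a reduced tree $T_0$ with $\Phi(T_0)=k-21j-63(\text{domination losses})\ge0$. Contraction can raise the degree of the merged vertex by one, so $T_0$ need not be subcubic. However, each contraction creates at most one vertex of excess degree one, so its total excess satisfies $\chi\le j\le k/21$. This is exactly why \Cref{thm:local-compactness} was stated with the $\chi_C$ term. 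It therefore suffices to prove finiteness of kernels for reduced trees with slack at most $k$ and total degree excess at most $\lfloor k/21\rfloor$, and then record the expansions as step (iii).

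For reduced $T_0$, use the five-term identity \eqref{eq:five-term-slack}. It bounds $a=\ell-p$, $b=d-3h$, $s$, $t$ and $r$ each by $k$. We then show that four local quantities are bounded linearly in $k$:
\begin{enumerate}[label=(\roman*)]
 \item $\sum_C\eta_C=t\le k/21$, by \eqref{eq:mu-decomposition}.
 \item $\sum_C z_C=d-3\sum_C\delta_C=b\le k$, by \Cref{lem:domination-decomposition}; each $z_C\ge0$ by \Cref{thm:local-compactness}.
 \item The number of supports with more than one leaf is at most $a\le k$, with at most $k+\chi$ extra leaves in total.
 \item $\sum_C\chi_C\le k/21$.
\end{enumerate}
By \eqref{eq:ordinary-characterization}, every nonordinary component has $(z_C,\eta_C,\chi_C)\ne(0,0,0)$. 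Hence there are at most $O(k)$ nonordinary components, and \eqref{eq:local-moore} bounds the size of each. The components with nonnegative pressure are then handled as in the proof of \Cref{thm:nineteen-stability}. Since $Q_C\succeq0$ when $\eta_C=0$, we get $\pi_C\ge0$ for them. For $\eta_C>0$ the size bound gives $|\pi_C|$ bounded in terms of $k$.

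The main obstacle is bounding $e_{PP}$ and the number of supports adjacent to nonordinary tiles. For this we use the leaf-deleted core identity. As in \eqref{eq:twenty-pressure}, together with $\sum\delta_C=h=q-r+t$, it gives
\[
 \sum_C\pi_C=s+3(r-t)-e_{PP}+(\text{correction from }b).
\]
The terms $s$, $r$, $t$ and $b$ are all bounded by $k$. Ordinary tiles have $\pi=0$, and the negative pressures lie on the boundedly many components with $\eta_C>0$. So $e_{PP}$ is bounded by a function of $k$. For this estimate we must check that zero-cover components with $\pi_C\ge0$ are accounted for through $z_C=|C|>0$; the sign argument of \eqref{eq:local-pressure} supplies this. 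Supports adjacent to nonordinary tiles number at most $\sum_{C\text{ nonordinary}}B_C$, which is bounded because each such tile has bounded size and bounded weights (weights are at most $3+\chi_C$). Supports of degree exceeding three in a quotient number at most $j$.

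Thus the kernel has boundedly many vertices, labelled ports (one for each half-edge from an exceptional event into an ordinary support or an ordinary tile), and bounded labels. There are therefore finitely many types, and they can be enumerated effectively from the explicit bounds. Away from the kernel only ordinary tiles $(2,0,2)$ and one-leaf supports of degree at most three remain, with $e_{PP}=0$ there. By the inverse map in the proof of \Cref{thm:subcubic-equality}, applied componentwise, this material is a partial matched-skeleton lift forest. Compatibility at the ports is the requirement that the contracted incidence graph be a tree and that reducedness hold, as in \Cref{def:exceptional-assemblies}.
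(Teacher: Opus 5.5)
Your proposal is essentially the paper's own proof. You contract clean paths under the excess budget $\chi(T_0)\le\lfloor k/21\rfloor$, then combine the five-term identity, the budgets $b=\sum_C z_C$ and $t=\sum_C\eta_C$, local compactness with its $\chi_C$ term, the core identity (which is exactly $\sum_C\pi_C=s+3r-3t-e_{PP}$, with no correction from $b$) to bound $e_{PP}$, a bound on $B_C$ for supports next to nonordinary tiles, and the inverse lift construction for the complement. The one step to repair is the small-order base: stop contracting once the order drops below five and absorb $P_2,P_3,P_4,K_{1,3}$ as closed kernels, as the paper does. Otherwise, for example, $P_7$ (with $\Phi=41$) contracts via $P_4$ to $P_1$ with $\Phi(P_1)=-1$, which breaks both $j\le\lfloor k/21\rfloor$ and $\Phi(T_0)\ge0$. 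Likewise the quotient $P_2$ of $P_5$ has no canonical partition $L\sqcup P\sqcup D$.
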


\begin{lemma}[Bounded-excess quotient kernels]
\label{lem:bounded-excess-quotient-kernel}
For fixed nonnegative integers $k$ and $m$, let $G$ be a reduced tree of
order at least five such that
\[
 0\le \Phi(G)\le k,
 \qquad
 \chi(G):=\sum_{v\in V(G)}(\deg_G(v)-3)_+\le m.
\]
Then the number and order of its nonordinary deep components, the number of
exceptional support events, and the number of ports are bounded by
effectively computable functions of $(k,m)$.  Consequently only finitely
many ported quotient-kernel types occur.
\end{lemma}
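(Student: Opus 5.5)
We follow the strategy of \Cref{thm:nineteen-stability} and the slack-twenty and slack-twenty-one classifications, now keeping the excess-degree term $\chi$. First we bound the global coordinates. Since $G$ is reduced, the five-term identity \eqref{eq:five-term-slack} remains valid when $\chi(G)>0$: its derivation used only \Cref{lem:deep-domination}, $\mu\ge p$, and the cited reduced bound \eqref{eq:cited-reduced-bound}, none of which needs subcubicity. It therefore gives
\[
 a+b+20s+21t+60r\le k,
\]
so each of $a=\ell-p$, $b=d-3h$, $s$, $t$, $r$ is at most $k$. By \eqref{eq:mu-decomposition}, $\sum_C\eta_C=t\le k$. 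By \Cref{lem:domination-decomposition} and the packing bound, $\sum_C z_C=d-3\sum_C\delta_C=b\le k$. Moreover $\sum_C\chi_C\le\chi(G)\le m$. Hence every component has $(z_C,\eta_C,\chi_C)$ bounded by $(k,k,m)$.

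Next we bound the nonordinary components. By \eqref{eq:ordinary-characterization}, a nonordinary component has at least one of $z_C$, $\eta_C$, $\chi_C$ positive. So the number of nonordinary components is at most $z+\eta+\chi$ summed over components, which is at most $2k+m$. \Cref{thm:local-compactness} bounds the order of each one by \eqref{eq:local-moore} with parameters $\le(k,k,m)$. Its hypotheses hold because reducedness makes the total-degree-two vertices independent, and the boundary condition $\deg_C(v)+b_v\ge2$ holds for deep vertices.

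We then count support events via the pressure identity. The core edge count is
\[
 e_{PP}+\sum_C B_C+(d-c)=p+d-1.
\]
Combined with $p-1=3q+s$ and $\sum_C\delta_C=h=q-r+t$, this gives
\[
 \sum_C\pi_C=s+3(r-t)-e_{PP}.
\]
An ordinary tile has $\pi_C=0$. For a nonordinary tile, $\pi_C\ge-1-3\delta_C$ is bounded below by a function of its bounded order, and $\pi_C$ is also bounded above by $B_C\le 3|C|+\chi_C$. Therefore
\[
 e_{PP}\le s+3r+\sum_{C\ \text{nonord}}(-\pi_C)
\]
is bounded in terms of $(k,m)$. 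The counts of exceptional support events are then bounded type by type:
\begin{itemize}
 \item supports with more than one leaf number at most $\sum_u(\lambda_u-1)=a\le k$;
 \item supports on a $P$--$P$ edge number at most $2e_{PP}$;
 \item supports of degree exceeding three number at most $\chi(G)\le m$;
 \item supports adjacent to a nonordinary tile number at most $\sum B_C$ over the boundedly many bounded tiles.
\end{itemize}
Ports are half-edges from these bounded objects into the ordinary forest, so their number is at most the total degree of the kernel, and that degree is bounded by $(3+m)$ times the number of kernel vertices. A kernel is a labelled graph with bounded vertex count, leaf multiplicities and weights. Therefore only finitely many isomorphism types occur, and they can be listed effectively by enumerating labelled trees up to the bound and checking the defining conditions.

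The main obstacle is the pressure step. We must make sure that nonordinary tiles with $\delta_C=0$, or with large $\delta_C$, cannot carry unboundedly negative pressure that is absorbed by $e_{PP}$ without breaking the bounds. The lower bound on $\pi_C$ comes only through the bounded $|C|$, so this step must use the local compactness bound, not merely the positivity argument of \eqref{eq:local-pressure}. That positivity argument needed $Q_C\succeq0$ and subcubicity, and here neither is available: $\eta_C$ may be positive and $\chi_C$ may be positive. I would treat this by splitting the sum in the identity above into ordinary tiles, which contribute zero, and nonordinary tiles, each with $|\pi_C|\le 4|C|+m$, and then apply the compactness bound componentwise.
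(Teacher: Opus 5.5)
Your proposal is correct and follows essentially the paper's proof. You bound $a,b,t,r$ by the five-term identity, bound the nonordinary components by local compactness, bound $e_{PP}$ through the core pressure identity $\sum_C\pi_C=s+3r-3t-e_{PP}$, and then count support events and ports type by type.

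The only variation is in the pressure step. You bound $-\pi_C\le3\delta_C+1$ directly on every nonordinary tile, using its bounded order. The paper instead notes that positive-semidefinite tiles already have $\pi_C\ge0$, so only tiles with $\eta_C>0$ can carry negative pressure. Contrary to your remark, that positivity argument needs only $\deg_C(v)+b_v\ge2$, not subcubicity. Either route works.
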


\begin{proof}
The five-term identity remains an exact identity for a reduced tree without
an upper degree assumption.  Hence each of $a,b,t,r$ is at most $k$, and
the local decompositions give
\[
 \sum_C z_C=b,
 \qquad
 \sum_C\eta_C=t,
 \qquad
 \sum_C\chi_C\le\chi(G)\le m.
\]
Thus at most $b+t+\chi(G)$ components have a nonzero triple
$(z_C,\eta_C,\chi_C)$.  By \Cref{thm:local-compactness}, each belongs to an
effectively computable finite weighted list depending only on $(k,m)$; the
zero triple is precisely the ordinary tile $\mathcal O$.  Notice also that
the total degree of every vertex is at most $3+m$, so the possible boundary
weights on every bounded component form a finite set.

We next bound the support part of the kernel.  For a deep component $C$,
the degree sum in the leaf-deleted core is
\[
 \sum_{v\in C}\bigl(\deg_C(v)+b_v\bigr)
   =2(|C|-1)+B_C.
\]
Since
$\deg_C(v)+b_v\le3+(\deg_C(v)+b_v-3)_+$, it follows that
\begin{equation}
 B_C\le |C|+\chi_C+2.                              \label{eq:quotient-boundary}
\end{equation}
Consequently only boundedly many support incidences meet nonordinary
components.  Supports carrying more than one leaf are bounded in number by
the leaf excess $a$, while supports of degree greater than three are bounded
in number by $\chi(G)$.

Finally the core Euler identity gives, without a degree restriction,
\begin{equation}
 \sum_C\pi_C=s+3r-3t-e_{PP}.                       \label{eq:quotient-pressure}
\end{equation}
Every positive-semidefinite component has nonnegative pressure.  A component
of negative pressure must therefore have $\eta_C>0$ and belongs to the
bounded nonordinary list just obtained.  Its order and boundary weights are
bounded, so its negative pressure is bounded as well.  Equation
\eqref{eq:quotient-pressure} now bounds $e_{PP}$, hence also the number of
endpoints of support--support edges.  Together with
\eqref{eq:quotient-boundary}, the bounds on multiple-leaf and high-degree
supports bound every exceptional support event.  The kernel has bounded
order and maximum degree at most $3+m$, so it has boundedly many ports and
only finitely many labelled ported isomorphism types.  Every bound used is
explicitly enumerable from \Cref{eq:local-moore}, proving effectiveness.
\end{proof}

\begin{proof}[Proof of \Cref{thm:fixed-slack-kernels}]
The four trees $P_2,P_3,P_4,K_{1,3}$ are finite base cases: whenever one of
them occurs in the target layer, include its whole isomorphism type as a
closed kernel with no ports.  We therefore apply the canonical-partition
argument below only to trees of order at least five.

First suppose that $T$ is reduced.  Put
\[
 a=\ell-p,\qquad b=d-3h.
\]
The five-term identity \eqref{eq:five-term-slack} reads
\begin{equation}
 k=a+b+20s+21t+60r,                                \label{eq:kernel-five-term}
\end{equation}
where $a,b,t,r\ge0$ and $s\in\{0,1,2\}$.  Hence only finitely many
coordinate rows occur at fixed $k$.  The exact local decompositions give
\begin{equation}
 b=\sum_C z_C,\qquad t=\sum_C\eta_C.              \label{eq:local-budgets}
\end{equation}
At most $b+t$ deep components therefore have
$(z_C,\eta_C)\ne(0,0)$.  By \Cref{thm:local-compactness}, every such
component belongs to a finite weighted list, while
\eqref{eq:ordinary-characterization} identifies every zero-cost component
with $\mathcal O$.

For reference, a fully explicit, intentionally large bound is obtained by
putting
\begin{equation}
 W_k=4k+\left\lfloor\frac{k}{21}\right\rfloor,
 \qquad
 M_k=\left\lceil\frac32
   \left(3\cdot2^{3W_k+1}-2\right)\right\rceil.
                                                               \label{eq:Mk}
\end{equation}
Every nonordinary component has at most $M_k$ vertices, and there are at
most $k+\lfloor k/21\rfloor$ of them.

It remains to bound exceptional support events.  The leaf excess $a$ bounds
the number of supports carrying more than one leaf.  The core-edge identity
becomes
\begin{equation}
 \sum_C\pi_C=s+3r-3t-e_{PP}.                       \label{eq:kernel-pressure}
\end{equation}
Ordinary components have pressure zero.  Positive-semidefinite components
have nonnegative pressure, while every negative component is drawn from the
finite exceptional list.  Since $|\pi_C|\le6|C|+1$ in the subcubic case,
one coarse consequence is
\begin{equation}
 e_{PP}\le2+3\left\lfloor\frac{k}{60}\right\rfloor
 +\left\lfloor\frac{k}{21}\right\rfloor(6M_k-2).   \label{eq:epp-bound}
\end{equation}
Thus loaded supports, endpoints of support--support edges, and supports
adjacent to exceptional components are bounded in number.

Every remaining support has one leaf, no support neighbour, and only
ordinary deep neighbours.  The inverse equality construction identifies
the complement of the bounded kernel with a partial matched-skeleton lift
forest: the endpoints of an ordinary tile are skeleton vertices, its
zero-weight centre subdivides a matching edge, incidence-two supports
subdivide nonmatching skeleton edges, and incidence-one supports are arms.
The kernel has bounded order and degree, so only finitely many weighted
ported isomorphism types occur.  This proves the reduced case.

Now let $T$ be nonreduced.  While the current tree is nonreduced and has
order at least five, contract a clean three-edge path.  The last contraction
is allowed to land below order five; stop as soon as the result is either a
reduced quotient or one of the four finite base trees above.  For
each performed contraction from a current tree $U$ to $U'$, writing
$g=\gamma(U')+1-\gamma(U)\ge0$, the exact shift formula gives
\begin{equation}
 \Phi(U)=\Phi(U')+21+63g.                           \label{eq:kernel-contraction}
\end{equation}
Because the terminal object has nonnegative slack, after at most
$m\le\lfloor k/21\rfloor$ such contractions we reach an object $T_0$ with
slack $0\le k'\le k$.  If $T_0$ is one of the four finite bases, absorb it
as a closed kernel and proceed directly to the finite reverse-expansion
step below.  Otherwise $T_0$ is reduced.  Contraction need not preserve
subcubicity, so
define
\[
 \chi(G)=\sum_{v\in V(G)}(\deg_G(v)-3)_+.
\]
If a contracted path has endpoint degrees $\alpha,\beta$, then
\[
 (\alpha+\beta-5)_+
 \le(\alpha-3)_++(\beta-3)_++1.
\]
Therefore
\begin{equation}
 \chi(T_0)\le m,\qquad \Delta(T_0)\le m+3.        \label{eq:quotient-excess}
\end{equation}
Apply \Cref{lem:bounded-excess-quotient-kernel} to $T_0$ with parameters
$(k,m)$.  This is the point at which the possible high degrees created by
contraction are controlled: \eqref{eq:quotient-excess} supplies the global
excess-degree budget required by that lemma.  Hence only finitely many
quotient slacks, contraction counts, exceptional components, supports, and
ports remain.

Finally reverse at most $m$ contractions.  For a reduced terminal quotient,
the degree bound in \eqref{eq:quotient-excess} leaves only finitely many
incident-edge partitions at each expansion; for a finite base quotient this
finiteness is immediate.  Keep only the reversals recovering a subcubic
tree.  Taking the finite union over all allowed quotient rows and finite
bases produces $\mathcal K_k$.
\end{proof}

\begin{remark}[Effective does not mean practical]
The proof gives a finite enumeration: list the contraction rows and
five-term coordinates, enumerate weighted components up to
\eqref{eq:local-moore}, assemble the bounded support kernels, and test the
bounded reverse expansions.  The Moore bounds are far too large for a
useful recognition algorithm.  A canonical minimum kernel and a short list
of named rewrites remain separate problems.
\end{remark}

\section{Uniform spectral exclusions}
\label{sec:spectral-gaps}

The five-term identity lists algebraically possible branches.  Two spectral
facts remove whole collections of those branches uniformly, rather than one
slack at a time.

\subsection{Two local spectral lemmas}

Let $C$ be a weighted subcubic deep component, put
\[
 X_C=\{v:\deg_C(v)+b_v=2\},\qquad m_C=|X_C|,
\]
and assume $X_C$ is independent, as it is in a reduced tree.  Then
\[
 Q_C=2I-A(C)-\diag(1_{X_C}).
\]

\begin{lemma}[Near-packing spectral gap]
\label{lem:near-packing-gap}
If $|C|=3\delta_C+1$ and $n_-(Q_C)=1$, then $m_C\le4$.
\end{lemma}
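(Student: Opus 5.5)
The approach is to count total-degree-two vertices through the boundary sum, and then exhibit a two-dimensional subspace on which $Q_C$ is negative definite whenever $m_C\ge5$; this contradicts $n_-(Q_C)=1$.

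\emph{Reduction to a boundary count.} Every vertex has total degree $2$ or $3$, and exactly the vertices of $X_C$ have total degree $2$. The degree sum of $C$ therefore gives
\[
 B_C=\sum_{v\in C}\bigl(\deg_C(v)+b_v\bigr)-2(|C|-1)=|C|+2-m_C.
\]
In particular the all-ones form is $\mathbf 1^{\trans}Q_C\mathbf 1=B_C-|C|=2-m_C$, which is already negative once $m_C\ge3$. The task is therefore to produce a \emph{second}, independent negative direction when $m_C\ge5$.

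\emph{Cut vectors.} I will split $C$ along an edge $uv$ into subtrees $C_1\ni u$ and $C_2\ni v$, and put $m_i=|X_C\cap C_i|$. Repeating the degree-sum computation inside $C_i$, where the cut edge contributes one extra unit of degree, gives
\[
 B_{C_i}=|C_i|+1-m_i,\qquad \mathbf 1_{C_i}^{\trans}Q_C\mathbf 1_{C_i}=2-m_i.
\]
The two indicator vectors interact only through the cut edge, so $\mathbf 1_{C_1}^{\trans}Q_C\mathbf 1_{C_2}=-1$. On $\operatorname{span}(\mathbf 1_{C_1},\mathbf 1_{C_2})$ the form is therefore
\[
 \begin{pmatrix}2-m_1&-1\\-1&2-m_2\end{pmatrix}.
\]
This matrix is negative definite as soon as $m_1,m_2\ge3$ and $(m_1-2)(m_2-2)>1$. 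Since $X_C$ can be swept vertex by vertex along a suitable edge, a cut with $m_1\ge3$ and $m_2\ge4$ exists once $m_C\ge7$. Interlacing then gives $n_-(Q_C)\ge2$. This settles $m_C\ge7$ without using the order hypothesis.

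\emph{The cases $m_C\in\{5,6\}$.} This is where $|C|=3\delta_C+1$ enters, and I expect it to be the main obstacle. I will take a maximum packing $Z\subseteq R_C$ of size $\delta_C$ from \Cref{lem:partial-packing}. As in \eqref{eq:packing-budget}, there are then two cases: either every packed target has degree two and exactly one vertex $w$ lies outside the packed closed neighbourhoods, or exactly one target has degree three and the neighbourhoods partition $C$. Since $X_C$ is independent, the neighbours of a degree-two packed target are not in $X_C$. Consequently $X_C$ consists of packed degree-two targets together with at most the single vertex $w$.

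The first attempt is to refine the cut vectors. I will replace the value $1$ on each packed target $z\in C_i$ by a parameter $\theta$ and optimize $\theta$; each target contributes a locally negative block $\begin{pmatrix}1&-1\\-1&\ldots\end{pmatrix}$ in its closed neighbourhood. Each target then gains an extra $-(\theta-1)^2$-type saving, and this should push the $2\times2$ determinant past the threshold with only $m_1=3$ and $m_2=2$ or $3$. If that optimization is not uniform, the fallback is the Schur route of \eqref{eq:compact-schur}. Suppressing the degree-two targets yields $Q_C\cong I\oplus(I-A(F)+D)$, where $D$ has rank at most $1$ or $3$ in the two packing cases. I would then show that five total-degree-two vertices force an induced subtree of $F$, such as a $P_6$ or a forked $P_5$, whose adjacency matrix has at least $2+\operatorname{rank}D$ eigenvalues above one. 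Interlacing would again give $n_-(Q_C)\ge2$. Verifying this last combinatorial forcing in the case $|Z_+|=1$, where $\operatorname{rank}D$ can reach three, is the step most likely to need a short finite case check.
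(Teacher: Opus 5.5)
\textbf{There is a genuine gap: the cases $m_C\in\{5,6\}$, which are the actual content of the lemma, are not proved.}

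What you do prove is correct. The boundary count, the cut-vector Gram matrix, and the case $m_C\ge7$ all go through. One repair is needed there: ``sweeping'' is not an argument in a branching tree. Instead take a centroid $c$ for the marks. If some component of $C-c$ has at least three marks, cut the edge to it; otherwise subcubicity forces $m_C\le6$.

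That case never uses $|C|=3\delta_C+1$, and the hypothesis is really needed for $m_C\in\{5,6\}$. Take a trivalent unmarked centre whose three branches are paths $x-y_i-x'$ with marked ends. This has $m_C=6$, $n_-(Q_C)=1$ and $|C|-3\delta_C=7$. For $m_C\in\{5,6\}$ you only sketch two strategies, and each has a concrete problem.

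\emph{The description of $X_C$ is wrong in the second packing case.} You claim $X_C$ consists of the packed degree-two targets plus possibly $w$. But the trivalent packed target $z$ has total degree three, so it is unmarked, and up to three of its neighbours may be marked. In the paper's atom $K_0$ all three are. In that case $D$ can also have rank four, not three. The paper handles this case by splitting at $z$ into rooted branches and using a finite certified catalog; your proposal does not address it.

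\emph{The fallback criterion cannot succeed in one legitimate configuration.} Take the first packing case with $\varepsilon=1$ and $F-w\cong S_{1,2,2}\sqcup P_2\sqcup P_2$, so $\delta_C=m_C=5$. Since $\nu_1(F-w)=1$, interlacing gives $\nu_1(F)\le2$. So no induced subtree of $F$ has $2+\operatorname{rank}D=3$ eigenvalues above one, and those avoiding $w$ have at most one. This is exactly the boundary case the paper settles by exact characteristic polynomials.

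\emph{The refined test vectors are the viable idea, but the proposal stops exactly where the work begins.} Raise to $2$ every degree-two packed target whose closed neighbourhood lies inside $C_i$. This lowers that side's energy by exactly one and leaves the cross term at $-1$. So on the span of the two vectors the form is
\[
 \begin{pmatrix}2-m_1-k_1&-1\\-1&2-m_2-k_2\end{pmatrix},
\]
where $k_i$ is the number of raised targets. What you need is an edge whose two sides both have $m_i+k_i\ge3$, not both equal to $3$. Three features of the count matter:
\begin{itemize}
 \item marked neighbours of a trivalent target count only once;
 \item a packed target separated from a neighbour by the cut is not raised;
 \item sometimes the edge must be taken inside a packed block, of the form $zu$. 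This happens, for instance, when the quotient tree of packed blocks is a star centred at a block.
\end{itemize}
A centroid argument on that quotient tree appears to produce such an edge whenever $m_C\ge5$ and $|C|=3\delta_C+1$. Consistently, no such edge exists for the $m_C=4$ atoms $K_0,K_1,K_2$. If completed, this would be a computation-free alternative to the paper's tables. But that combinatorial lemma is the missing step, not a detail.
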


\begin{proof}
Choose a distance-three packing $Z\subseteq R_C$ of size $\delta_C$.
Disjointness of its closed neighbourhoods shows that either every packed
target has degree two or exactly one has degree three.

We use three elementary matching facts.  First, a subcubic tree with a
matching missing one vertex and order at least nine contains an induced
$P_6$.  Indeed, if its diameter were at most four, rooting at a central
vertex or central edge shows that at most one branch may contain the
unmatched vertex, while every other branch is internally matched; the
subcubic bound then permits at most seven vertices.  Second, a subcubic tree
with a perfect matching and order at least eight contains an induced $P_6$
by \Cref{lem:small-diameter-matching}.  Third, the complete perfect-matching
list through order six is
\begin{equation}
\begin{array}{c|c|c}
|H|&H&\nu_1(H)\\ \hline
2&P_2&0\\
4&P_4&1\\
6&P_6&2\\
6&S_{1,2,2}&1.
\end{array}                                        \label{eq:small-pm-list}
\end{equation}
The last column follows from the characteristic polynomials.  Since $P_6$
has two adjacency eigenvalues greater than one, an induced $P_6$ gives two
negative directions for $I-A(F)$ by interlacing.

In the first case, suppressing all packed targets produces a subcubic tree
$F$ on $2\delta_C+1$ vertices.  The suppressed edges form a matching
covering all but one vertex $w$, and
\begin{equation}
 Q_C\cong I_{\delta_C}\oplus
 \bigl(I-A(F)+\varepsilon e_we_w^{\trans}\bigr),
 \qquad m_C=\delta_C+1-\varepsilon,                 \label{eq:near-packing-A}
\end{equation}
where $\varepsilon\in\{0,1\}$.  If $\varepsilon=0$ and $m_C\ge5$, then
$|F|\ge9$; a subcubic tree with a matching missing one vertex and order at
least nine contains an induced $P_6$.  Since $P_6$ has two adjacency
eigenvalues greater than one, \eqref{eq:near-packing-A} has at least two
negative directions.

Suppose $\varepsilon=1$.  Every component of $F-w$ has a perfect matching,
and its principal block is $I-A(F-w)$.  If this block has fewer than two
negative directions, \eqref{eq:small-pm-list} and the induced-$P_6$ fact
leave at most one component with $\nu_1=1$, all others being copies of
$P_2$.  There are at most three components.  Since $m_C=\delta_C\ge5$,
their total order is at least ten, and the subcubic degree bound forces the
sole boundary
\[
 \delta_C=5,\qquad F-w\cong S_{1,2,2}\sqcup P_2\sqcup P_2.
\]
There are three attachment orbits in $S_{1,2,2}$: its short leaf, a long-arm
middle, and a long leaf.  The attachments to the two $P_2$ components are
unique up to symmetry.  For the three displayed orbits, exact
characteristic polynomials of the residual block give
\begin{equation}
\resizebox{\textwidth}{!}{$
\begin{array}{c|l|c}
\text{attachment}&\chi(x)/[x^2(x-2)^2]&\In\\ \hline
\text{short leaf}&(x^2-x-1)(x^5-7x^4+13x^3+x^2-14x+5)&(7,2,2)\\
\text{long-arm middle}&x^7-8x^6+19x^5-5x^4-29x^3+21x^2+7x-5&(7,2,2)\\
\text{long leaf}&x^7-8x^6+19x^5-5x^4-28x^3+18x^2+8x-4&(7,2,2).
\end{array}$}                                      \label{eq:near-packing-A-table}
\end{equation}
Sturm sign variation gives two negative roots in every row.  Thus the first
packing case is impossible when $m_C\ge5$.

In the second packing case, let $z$ be the unique degree-three packed target.
Suppress the other $\delta_C-1$ packed vertices and delete $z$.  The three
rooted branches $K_i$ have matching-covered interiors.  If $\varepsilon_i$
records whether the branch root has total degree three and $\delta_i$ is the
number of matching edges in that branch, then
\begin{equation}
 m_C=\sum_{i=1}^3(\delta_i+1-\varepsilon_i).        \label{eq:near-packing-B}
\end{equation}
To justify the remaining finite boundary, write the residual block on a
rooted branch $K_i$ as
\[
 S_i=I-A(K_i)+\varepsilon_i e_{y_i}e_{y_i}^{\trans}.
\]
If $S_i\succeq0$, then $K_i$ is a singleton.  For $\varepsilon_i=0$, any
connected tree of order at least three contains an induced $P_3$, whose
adjacency index exceeds one.  For $\varepsilon_i=1$, a matching edge in
$K_i-y_i$ gives the principal path block with diagonal $(2,1,1)$ and
determinant $-1$.

If $n_-(S_i)=1$, then
\begin{equation}
 \delta_i+1-\varepsilon_i\le
 \begin{cases}4,&\varepsilon_i=0,\\3,&\varepsilon_i=1.
 \end{cases}                                       \label{eq:rooted-branch-bound}
\end{equation}
The first line follows from the near-perfect-matching induced-$P_6$ fact.
For the second, deletion of $y_i$ and \eqref{eq:small-pm-list} give the
following conservative boundary-forest superset; direct leaf elimination
gives
\begin{equation}
\begin{array}{c|c|c|c}
K_i-y_i&\text{raw attachments}&\text{orbits}&\In(S_i)\\ \hline
S_{1,2,2}\sqcup P_2&10&3&(6,2,1)\\
P_4\sqcup P_2\sqcup P_2&16&2&(6,2,1)\\
S_{1,2,2}\sqcup P_2\sqcup P_2&20&3&(7,2,2).
\end{array}                                        \label{eq:rooted-branch-table}
\end{equation}
Some raw attachments in this table give the branch root degree three before
the edge to $z$ is restored, so they are not legal full subcubic branches.
Keeping them is a safe over-enumeration: every legal branch is included, and
the asserted one-negative bound holds for the larger set.  Hence the second
line of \eqref{eq:rooted-branch-bound} follows.

The branch principal submatrix $\bigoplus_iS_i$ has at most one negative
direction.  If all three branches are positive semidefinite, they are
singletons and \eqref{eq:near-packing-B} gives $m_C\le3$.  Otherwise exactly
one branch is one-negative and the other two are singletons.  Under
$m_C\ge5$, every legal state is covered by the four mark-count rows
\begin{equation}
\begin{array}{c|c|c|c|c}
\varepsilon&m_{\rm branch}&\text{marked singleton branches}&
\text{rooted types}&\In(S)\\ \hline
0&3&2&3&(6,2,0)\\
0&4&1&1&(7,2,1)\\
0&4&2&1&(7,2,1)\\
1&3&2&2&(7,2,1).
\end{array}                                        \label{eq:near-packing-B-table}
\end{equation}
The six rooted representatives used in these rows are listed next.  Vertices
are numbered from zero and every root is vertex zero; `central $r$' means
that $r$ marked singleton branches accompany the displayed one-negative
branch.
\begin{equation}
\resizebox{\textwidth}{!}{$
\begin{array}{c|c|l|c|c|l|c}
&\varepsilon&E(K)&\text{root}&\In(S_i)&\text{central cases }r:\In(S)&
\text{full-degree status}\\ \hline
A1&0&01,03,12,34&0&(3,1,1)&2:(6,2,0)&\text{conservative only}\\
A2&0&01,12,13,34&0&(4,1,0)&2:(6,2,0)&\text{legal}\\
A3&0&01,12,23,34&0&(3,1,1)&2:(6,2,0)&\text{legal}\\
B1&0&01,12,23,25,34,56&0&(4,1,2)&1:(7,2,1),\ 2:(7,2,1)&\text{legal}\\
C1&1&01,03,05,12,34,56&0&(4,1,2)&2:(7,2,1)&\text{conservative only}\\
C2&1&01,12,23,25,34,56&0&(5,1,1)&2:(7,2,1)&\text{legal}.
\end{array}$}                                      \label{eq:rooted-six-catalog}
\end{equation}
For A1 the root has degree two in $K$, so restoring the edge to $z$ conflicts
with $\varepsilon=0$; for C1 the root already has degree three in $K$, so
restoring that edge gives degree four.  Their inclusion can only strengthen
the exclusion.  For A2, A3, B1, and C2, the root-deleted forest is perfectly matched and direct
inspection verifies the stated mark-count row.  Exact rational leaf
congruence gives the branch and central inertias displayed in
\eqref{eq:rooted-six-catalog}; the machine-readable companion certificate is
\path{near-packing-rooted-type-catalog.json}.  Thus every legal state is covered
and has at least two negative directions after central assembly.  This
contradicts $n_-(Q_C)=1$.  The degree-three packing case is therefore
impossible, and $m_C\le4$.
\end{proof}

\begin{lemma}[Two-surplus marked-centroid gap]
\label{lem:two-surplus-gap}
If $m_C\ge6$ and $n_-(Q_C)=1$, then $m_C=6$ and
\begin{equation}
 |C|-3\delta_C\ge7.                                \label{eq:two-surplus-rigidity}
\end{equation}
In particular, $|C|=3\delta_C+2$ and $m_C\ge6$ imply
$n_-(Q_C)\ne1$.
\end{lemma}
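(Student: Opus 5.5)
The plan is to translate the one-negative condition into an adjacency-index condition by the pendant-pair device from \Cref{lem:marked-smith}, and then to run a centroid-splitting argument. Write $Q_C=2I-A(C)-\sum_{x\in X_C}e_xe_x^{\trans}$. Form $G$ by attaching two new pendant leaves at every $x\in X_C$. Eliminating the $2m_C$ leaf coordinates of $2I-A(G)$ contributes only positive pivots and leaves exactly $Q_C$ as Schur complement. Hence
\[
 n_-(Q_C)=n_-(2I-A(G))=\bigl|\{\lambda\in\operatorname{Spec}A(G):\lambda>2\}\bigr|,
\]
and $n_-(Q_C)=1$ means $\lambda_1(G)>2\ge\lambda_2(G)$. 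The same identity holds for every induced subtree $S\subseteq C$ with its inherited marks, because $Q_C[S]=2I-A(S)-\diag(1_{X_C\cap S})$.

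The basic local fact comes from Smith's list as used in \Cref{lem:marked-smith}. A marked subtree with two marks is, after pendant pairs are added, an affine $\widetilde D$ tree, so it is positive semidefinite and singular. A connected subtree carrying three marks contains a proper supergraph of some $\widetilde D_n$, so its block has a negative direction. Since vertex-disjoint induced subtrees with no edges between them give a direct-sum principal submatrix, interlacing shows that $n_-(Q_C)=1$ forbids two such nonadjacent connected subtrees with three marks each.

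For $m_C\ge6$, root $C$ and choose a deepest vertex $u$ whose rooted subtree $S_u$ carries at least three marks. Each child subtree then carries at most two marks, so $S_u$ carries between three and $1+3\cdot2=7$ marks by subcubicity. Delete $N_C[u]$, or more economically $u$ and its parent. I would show that if at least three marks remain in one component of the rest, the forbidden pair appears. Independence of $X_C$ and total degree two at marked vertices control which boundary marks can be lost in the deletion. If the marks outside cannot be gathered into one connected three-mark piece, the count should force $m_C\le6$. In the boundary case the six marks are split as three around a marked centroid, hence the lemma's name, with every branch carrying at most two marks. I would finish the exclusion of $m_C\ge7$ by this counting, using interlacing at the centroid $v$: $\lambda_2(G)\le\max_{K}\lambda_1(K)$ over components $K$ of $G-v$. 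The aim is to force two branches each to have index above two unless $m_C=6$.

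The main obstacle is the rigidity inequality $|C|-3\delta_C\ge7$ in the surviving $m_C=6$ configuration. For this I would use the packing budget \eqref{eq:packing-budget}. Each branch at the centroid carries at most two marks and must keep index at most two after adding pendant pairs, so by Smith's $D$-type shapes it is a path or snake. Its zero-weight targets then lie on long paths between marked vertices of total degree three. On each branch I would compute the partial cover number $\delta$ branchwise via \Cref{lem:partial-packing} and compare with the branch order. The plan is to show that every branch wastes at least two vertices relative to a perfect distance-three packing, and that the centroid region wastes at least one more, giving $z_C\ge 3\cdot2+1=7$. If the branchwise count comes out one short, I would fall back to an exact finite check. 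The shapes are determined by the Smith list up to path lengths, and path lengths enter only modulo three in $z_C$, so the check is finite. The final sentence of the lemma then follows immediately, since $z_C=2<7$.
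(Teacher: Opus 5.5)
\textbf{There is a genuine gap in excluding $m_C\ge7$.} Your only source of a second negative direction is a direct sum: either two connected three-mark pieces with no edge between them, or two components of $G-v$ of index above two. (The inequality $\lambda_2(G)\le\max_K\lambda_1(K)$ that you display bounds $\lambda_2$ from the wrong side; what you need is $\lambda_2(G)\ge\lambda_2(G-v)$.) This fails because the heavy pieces may be joined by a single edge, and in that case your claim that the count forces $m_C\le6$ is false.

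Here is a configuration where it fails.
\begin{itemize}
\item Let $c$ be unmarked with neighbours $y_1,y_2,y_3$.
\item Let $y_1$ have weight $0$ and two children: $w_1$, of weight $0$ with two marked leaf children, and a marked leaf $w_2$.
\item For $i=2,3$, let $y_i-u_i-x_i$ be a path with $y_i$ marked of weight $0$, $b_{u_i}=1$, and $x_i$ a marked leaf.
\end{itemize}
Then $X_C$ is independent, $m_C=7$, and $c$ is a mark centroid with branch counts $(3,2,2)$. After pendant pairs are added, the two light branches become exactly $\widetilde D_6$. Every subgraph of the heavy branch with index above two contains $y_1$. Hence $G$ has no induced subgraph consisting of two nonadjacent parts of index above two, so neither direct sums nor vertex deletion can give $\lambda_2(G)>2$.

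The lemma still holds here, but for a quantitative reason your proposal lacks. For connected induced $H$ one has $\boldsymbol1^{\trans}Q_C[H]\boldsymbol1=2-|X_C\cap H|$. So if an edge splits the marks as $a\,|\,b$, the constant vectors on its two sides have Gram matrix $\begin{pmatrix}2-a&-1\\-1&2-b\end{pmatrix}$. This is negative definite when $(a-2)(b-2)>1$; in the example above the split is $3\,|\,4$. Index above two on both sides of an edge does not suffice in general: two copies of $K_{1,5}$ with adjacent centres have $\lambda_2<2$. So this margin is essential. The borderline split $3\,|\,3$, which you must also exclude to get the centroid structure at $m_C=6$, needs the paper's further first-order argument. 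That argument shows that on a side carrying at least three marks, the constant vector is not the energy minimizer subject to value one at the cut vertex.

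\textbf{The $m_C=6$ rigidity step has the same defect.} Interlacing at the centroid still allows one branch of index above two, so your assertion that every branch keeps index at most two is unproved. The paper rules out a rooted vector $f$ of negative energy on one branch by pairing it with $g_t$, equal to one on the other two branches and $t$ at $c$. For this pair, $q_C(g_t)=2t^2-4t$ and $f^{\trans}Q_Cg_t=-t$, and the span is negative definite for small $t>0$. The resulting first-order conditions make each branch a path whose nonroot vertices all have weight one. Hence the zero-weight targets are exactly $c,y_1,y_2,y_3$, and $\delta_C=1$.

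Your picture of targets spread along the branch paths is therefore mistaken. In fact $z_C=|C|-3$ grows linearly with the branch lengths rather than depending on them modulo three, and no finite check is needed: independence of $X_C$ gives each branch at least three vertices, so $z_C\ge7$. Note also that a two-mark subtree becomes $\widetilde D$ only when it is exactly the path between its marks; otherwise its index exceeds two.
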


\begin{proof}
For an induced connected subtree $H$, the all-ones vector has energy
\begin{equation}
 \boldsymbol1^{\trans}Q_C[H]\boldsymbol1=2-|X_C\cap H|.  \label{eq:marked-energy}
\end{equation}
No edge can split $X_C$ into two parts of size at least three.  Indeed, if
$C-uv=A\sqcup B$ with $a=|A\cap X_C|\ge3$ and
$b=|B\cap X_C|\ge3$, the constant vector on $A$ cannot minimize energy
subject to value one at $u$.  Otherwise differentiation in each nonroot
coordinate would give $(Q_C[A]\boldsymbol1)_x=0$ for $x\ne u$.  Thus every
marked nonroot vertex is a leaf of $A$, and every unmarked nonroot vertex has
degree two in $A$.  Every branch from $u$ is consequently a path ending at
one marked leaf, so
\[
 a=\deg_A(u)+1_{\{u\in X_C\}}.
\]
If $u\in X_C$, then $\deg_C(u)\le2$ and $\deg_A(u)\le1$; if
$u\notin X_C$, then $\deg_C(u)\le3$ and $\deg_A(u)\le2$.  Both cases give
$a\le2$, a contradiction.  Hence there is a rooted vector $f$ on $A$ with
$f(u)=1$ and $q_A(f)<2-a$.  Together with the constant vector on $B$, its
Gram matrix is
\[
 \begin{pmatrix}q_A(f)&-1\\-1&2-b\end{pmatrix},
\]
which is negative definite because both diagonal entries are negative and
$q_A(f)(2-b)-1>(a-2)(b-2)-1\ge0$.  This would give two negative
directions.

Choose a centroid $c$ for the unit weights on $X_C$.  If $m_C\ge7$, every
component of $C-c$ has at most two marks.  Indeed, a component with at least
three marks has at most $m_C/2$ marks by the centroid property, so the edge
joining it to $c$ has at least three marks on both sides, contrary to the
preceding exclusion.
Subcubicity gives at most six marks when $c\notin X_C$ and at most five when
$c\in X_C$, a contradiction.  Thus $m_C=6$; moreover $c$ is unmarked,
trivalent, and each of the three branches $B_i$ of $C-c$ contains exactly
two marks.

By \eqref{eq:marked-energy}, the constant vector has zero energy on each
$B_i$.  It minimizes energy subject to value one at the root $y_i$: if a
rooted vector $f$ had energy $\alpha<0$, put the constant value one on the
other two branches and value $t$ at $c$.  For the resulting vector $g_t$,
\[
 q_C(g_t)=2t^2-4t,\qquad f^{\trans}Q_Cg_t=-t.
\]
When $0<t<-4\alpha/(1-2\alpha)$, both diagonal entries of the Gram matrix
of $f,g_t$ are negative and
\[
 \alpha(2t^2-4t)-t^2>0.
\]
Their span would be negative definite, contradicting $n_-(Q_C)=1$.

The constrained minimum therefore occurs at the constant vector.
Differentiating in nonroot coordinates gives
$(Q_C[B_i]\boldsymbol1)_x=0$ for $x\ne y_i$.  Hence every marked nonroot
vertex is a leaf and every unmarked nonroot vertex has degree two.  Since
$B_i$ has exactly two marks,
\begin{equation}
 2=\deg_{B_i}(y_i)+1_{\{y_i\in X_C\}}.              \label{eq:centroid-root-degree}
\end{equation}
Thus $B_i$ is a path.  If $y_i$ is marked, it is one endpoint and the other
endpoint is the second mark; if it is unmarked, it lies internally between
the two marked endpoints.  Equation \eqref{eq:centroid-root-degree} also
gives $b_{y_i}=0$, and $b_c=0$ because $c$ is unmarked and trivalent.
Every other branch vertex has boundary weight one.  The zero-boundary target
set is therefore exactly $\{c,y_1,y_2,y_3\}$, so $c$ alone covers it and
$\delta_C=1$.
Each branch has at least three vertices, giving
\[
 |C|-3\delta_C=1+\sum_{i=1}^3|B_i|-3\ge7.
\]
This proves \eqref{eq:two-surplus-rigidity} and its stated consequence.
\end{proof}

\subsection{Uniform exclusions}

\begin{proposition}[Uniform one-negative gap]
\label{prop:uniform-one-negative-gap}
For every reduced subcubic tree,
\[
 s=0,\qquad t=1\quad\Longrightarrow\quad r\ge1.
\]
No condition on $a=\ell-p$ or $b=d-3h$ is needed.
\end{proposition}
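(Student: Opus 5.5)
We argue by contradiction: assume $r=0$ and run the pressure collision with a single negative component. Then $h=q+t=q+1$ and, by \eqref{eq:mu-decomposition}, exactly one deep component $E$ has $\eta_E=n_-(Q_E)=1$, while every other residual block is positive semidefinite. The core-edge identity \eqref{eq:kernel-pressure} (equivalently \eqref{eq:quotient-pressure}) with $s=r=0$, $t=1$ reads
\[
 \sum_C\pi_C=-3-e_{PP}.
\]
For every positive-semidefinite component, the argument for \eqref{eq:local-pressure} applies. It uses only \Cref{cor:three-delta}, the all-ones form, and the subcubic boundary \eqref{eq:deep-boundary}, and it covers the case $\delta_C=0$ as well. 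Hence every such component has $\pi_C\ge0$, and therefore
\[
 \pi_E\le-3,\qquad\text{that is,}\qquad B_E\le3\delta_E-2 .
\]

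Next we convert this pressure deficit into a count of total-degree-two vertices. Every total degree lies in $\{2,3\}$. The degree sum in the leaf-deleted core gives $\sum_{v\in E}(\deg_E(v)+b_v)=2|E|-2+B_E$, so
\[
 m_E=3|E|-(2|E|-2+B_E)=|E|+2-B_E\ge |E|-3\delta_E+4=z_E+4 .
\]
Since $T$ is reduced, $X_E$ is independent and \Cref{thm:local-compactness} gives $z_E\ge0$. It remains to split on $z_E$ and quote the spectral lemmas.

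\emph{Case $z_E=0$.} We repeat the $R2$ argument of \Cref{thm:slack-twenty-one}. If $\delta_E=0$ then $|E|=0$, which is absurd, so $\delta_E\ge1$. Packing equality and reducedness make the suppressed tree $F_E$ perfectly matched with all outside vertices of total degree three. This gives $B_E=2\delta_E+2$ and $\pi_E=1-\delta_E$. Then $\pi_E\le-3$ forces $\delta_E\ge4$, so $|F_E|\ge8$. \Cref{lem:small-diameter-matching} yields an induced $P_6$, and interlacing through \eqref{eq:e37-suppression} gives $n_-(Q_E)\ge2$, a contradiction.

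\emph{Case $z_E=1$.} Here $m_E\ge5$, which contradicts \Cref{lem:near-packing-gap}.

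\emph{Case $z_E=2$.} Here $m_E\ge6$. \Cref{lem:two-surplus-gap} then demands $z_E\ge7$, a contradiction.

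\emph{Case $z_E\ge3$.} Here $m_E\ge7$, which contradicts the conclusion $m_E=6$ of \Cref{lem:two-surplus-gap}.

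Hence $r=0$ is impossible, so $r\ge1$. At no point was $a$ or the global value of $b$ used, only the local surplus $z_E$.

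The only delicate point is the $z_E=0$ branch. There the packing-equality bookkeeping, namely $B_E=2\delta_E+2$ via reducedness forcing total degree three off the packing, must be checked honestly rather than imported from $R2$, where it was stated under slightly different global data. The local argument, however, uses only $|E|=3\delta_E$ and reducedness, so it transfers verbatim.
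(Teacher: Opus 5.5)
Your proposal is correct and follows essentially the paper's own proof: the same pressure collision forcing $\pi_E\le-3$, the same degree identity $m_E=z_E+1-\pi_E$, and the same three-way split on $z_E$ via the perfect-matching/induced-$P_6$ argument, \Cref{lem:near-packing-gap}, and \Cref{lem:two-surplus-gap}. The only cosmetic difference is in the surplus-at-least-two case: the paper closes it in one step by computing $\pi=z-5\ge2$, whereas you split it into the subcases $z_E=2$ and $z_E\ge3$.
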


\begin{proof}
Assume $r=0$.  Then
\[
 p=3q+1,\qquad h=\sum_C\delta_C=q+1,\qquad
 \sum_C n_-(Q_C)=1.
\]
Let $N$ be the unique one-negative component, put
$z_N=|N|-3\delta_N$, and let $m_N$ count its total-degree-two vertices.
The core identity and the local degree sum give
\begin{equation}
 \sum_C\pi_C=-3-e_{PP},
 \qquad m_N=z_N+1-\pi_N.                            \label{eq:uniform-gap-identities}
\end{equation}
All components other than $N$ are positive semidefinite and have
nonnegative pressure, so $\pi_N\le-3-e_{PP}$.

If $z_N=0$, packing equality suppresses $N$ to a subcubic
perfect-matching tree $F_N$ and
\[
 Q_N\cong I_{\delta_N}\oplus(I-A(F_N)),
 \qquad \pi_N=1-\delta_N.
\]
Thus $\delta_N\ge4$.  Such a perfect-matching tree contains an induced
$P_6$, and interlacing gives at least two negative directions, a
contradiction.

If $z_N=1$, \eqref{eq:uniform-gap-identities} gives $m_N\ge5$.  Suppress a
maximum distance-three packing as in \Cref{lem:near-packing-gap}.
Reducedness supplies the required independence, and that lemma gives
$m_N\le4$, a contradiction.

Finally, if $z_N\ge2$, the same rooted suppression applied at a marked
centroid is exactly \Cref{lem:two-surplus-gap}.  It forces $m_N=6$ and
$z_N\ge7$, whence $\pi_N=z_N+1-m_N=z_N-5\ge2$.  This contradicts
$\pi_N\le-3-e_{PP}$.  Hence $r\ge1$.
\end{proof}

\begin{corollary}
\label{cor:no-spectral-below-forty}
A reduced tree with $\Phi<40$ has $t=0$.
\end{corollary}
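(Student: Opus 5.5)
The plan is to combine the five-term identity \eqref{eq:five-term-slack} with \Cref{prop:uniform-one-negative-gap}. Throughout, ``reduced tree'' is read in the subcubic setting of this section, since that is the hypothesis under which the proposition was proved. Small orders are handled first. A reduced tree of order at most four is $P_2$, $P_3$ or $K_{1,3}$, because $P_4$ is not reduced. For each of these $\mu=p=1$, so $t=0$ directly. Hence I may assume order at least five, where the canonical partition $L\sqcup P\sqcup D$ and all five coordinates are available.

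Suppose, for contradiction, that $t\ge1$. Every term of
\[
 \Phi(T)=a+b+20s+21t+60r
\]
is a nonnegative integer. From $21t\le\Phi(T)<40$ we get $t=1$. Then $20s+60r\le\Phi(T)-21\le18$, which forces $s=0$ and $r=0$.

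Now \Cref{prop:uniform-one-negative-gap} applies: $T$ is reduced and subcubic with $s=0$ and $t=1$. It gives $r\ge1$, and hence
\[
 \Phi(T)\ge21t+60r\ge81,
\]
which contradicts $\Phi(T)<40$. Therefore $t=0$.

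There is no real obstacle here. All the spectral content sits in the proposition, and this step is just coefficient separation. The one point to watch is using the proposition with exactly the hypotheses it needs: reducedness, subcubicity, and order at least five, so that the coordinates $s,t,r$ are defined.
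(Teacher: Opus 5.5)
Your proof is correct and is essentially the paper's argument: coefficient separation in the five-term identity leaves only $t=1$ with $s=r=0$, and the uniform one-negative gap (Proposition~\ref{prop:uniform-one-negative-gap}) then forces $r\ge1$, i.e.\ $\Phi\ge81$, a contradiction. The small-order preamble is harmless but unnecessary, since the coordinates are defined for every reduced tree of order at least three; for $P_2$ the partition $L\sqcup P\sqcup D$ is not defined at all, so you should not claim ``$\mu=p=1$'' there.
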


\begin{proof}
The coefficient $21t$ excludes $t\ge2$.  If $t=1$ and $s\ge1$, the cost is
at least $41$; if $s=0$, \Cref{prop:uniform-one-negative-gap} forces
$r\ge1$ and hence cost at least $81$.
\end{proof}

The second exclusion is needed only at the top layer of our explicit atlas.

\begin{lemma}[Low-$\nu_1$ perfect-matching bound]
\label{lem:low-nu-perfect-matching}
Let $F$ be a subcubic tree with a perfect matching, and put
\[
 \nu_1(F)=|\{\lambda\in\operatorname{Spec}A(F):\lambda>1\}|.
\]
If $\nu_1(F)\le2$, then
\begin{equation}
\begin{array}{c|ccc}
\nu_1(F)&0&1&2\\ \hline
|F|/2&1&\le3&\le6.
\end{array}                                        \label{eq:low-nu-table}
\end{equation}
\end{lemma}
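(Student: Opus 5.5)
Write $|F|=2k$.  The plan rests on interlacing: for every induced subgraph $J$ of $F$ we have $\nu_1(F)\ge\nu_1(J)$, and for disjoint unions $\nu_1$ is additive.  So the argument consists of exhibiting, inside any large subcubic perfect-matching tree, induced forests with at least $\nu_1(F)+1$ eigenvalues above one.  The row $\nu_1=0$ has already been settled in the zero-triple argument of \Cref{thm:local-compactness}.  If $|F|\ge4$, then $F$ is not a star and contains an induced $P_4$, which has $\lambda_1(P_4)>1$.  Hence $F=P_2$ and $|F|/2=1$.

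For the row $\nu_1=1$ we use \Cref{lem:small-diameter-matching}: if $|F|\ge8$, then $F$ contains an induced $P_6$, which has two eigenvalues above one.  So $|F|\le6$, that is, $|F|/2\le3$.  This row is therefore immediate from that lemma together with the list \eqref{eq:small-pm-list}.

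The row $\nu_1=2$ is the main obstacle.  The target is to show that $|F|\ge14$ forces $\nu_1(F)\ge3$.  First I look for three disjoint, mutually non-adjacent induced $P_4$'s, or an induced $P_4$ non-adjacent to an induced $P_6$.  Since $\nu_1(P_4)=1$ and $\nu_1(P_6)=2$, either configuration yields three eigenvalues above one.  Induced $P_9$ (with $\nu_1=3$) is a further witness.  So if $\operatorname{diam}(F)\ge8$ we are done immediately.

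This leaves the case $\operatorname{diam}(F)\le7$, where I use a centroid or long-geodesic argument.  Take a longest path $v_0\cdots v_D$ in $F$.  Every leaf is matched to its support, so every pendant branch hanging off the geodesic is itself perfect-matching-like, apart possibly from its root, which is matched on the path.  Since $F$ is subcubic with $D\le7$, either a branch off the middle of the path has depth at least three, which gives an extra induced $P_4$ away from an induced $P_6$ on the path, or all branches are shallow.  In the shallow case the degree bound and the matching constraint (each support carries exactly one leaf, and its match is forced) limit $|F|$ to at most $12$.

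I would carry out this count much as in the proof of \Cref{lem:small-diameter-matching}.  Root the tree at the central vertex or central edge and bound the size of each depth layer using subcubicity.  Whenever two subtrees at distance at least two from one another each have order at least four and a perfect matching, delete the separating vertices and read off induced $P_4$'s.

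If the hand case analysis becomes unwieldy at diameters $6$ and $7$, I would fall back on a bounded exact enumeration.  The objects to enumerate are all subcubic perfect-matching trees on $14$ vertices, which are finitely many.  Every larger such tree contains one of these as an induced subtree: delete a leaf together with its matched support whenever the support has degree two, and keep the matching.  The enumeration then checks that $\nu_1\ge3$ for each of them, and monotonicity under induced subgraphs finishes the row.
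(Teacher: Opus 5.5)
Your proof is correct in outline, but it splits the work differently from the paper. The paper makes one symbolic reduction: an induced $P_{10}$ has three eigenvalues above one, so $\operatorname{diam}F\le 8$. It then settles all three rows at once by an exact census of every subcubic tree of diameter at most eight ($50{,}248$ trees, $279$ with perfect matchings). That census also yields the full catalogue of $1$, $2$, and $11$ types with $\nu_1=0,1,2$.

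You prove rows $0$ and $1$ by hand, via an induced $P_4$ and the induced $P_6$ of \Cref{lem:small-diameter-matching}. For row $2$, your fallback replaces the diameter bound by a hereditary reduction. Deleting a leaf and its degree-two support leaves an induced subcubic perfect-matching tree of order $|F|-2$. Interlacing then reduces the row to checking $\nu_1\ge3$ on the subcubic perfect-matching trees of order exactly fourteen.

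This buys a far smaller computation: a single order, a few hundred trees at most, confined to one row. The price is that you certify only the bound, not the explicit list of extremal types. The paper's later uses of the lemma need only the bounds, so nothing downstream is lost. Your diameter remark is also sharper than the paper's: $\nu_1(P_9)=3$ already, so the paper's census could have stopped at diameter seven.

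Two points need tightening.

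First, justify that the deletable pair always exists. If $v_0v_1\cdots v_D$ is a longest path with $D\ge2$, every neighbour of $v_1$ other than $v_2$ is a leaf. Such a leaf would compete with $v_0$ for the matching partner $v_1$, so $\deg v_1=2$.

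Second, drop or repair the hand analysis for diameter at most seven. The bound $|F|\le12$ in the ``shallow case'' is only asserted. Also, the configuration you name (a branch $P_4$ non-adjacent to an induced $P_6$ on the geodesic) cannot occur at a middle vertex of a geodesic with at most eight vertices. Such a branch has depth at most three, so any $P_4$ in it contains the vertex adjacent to the attachment point. Removing the attachment point leaves path segments too short to contain a $P_6$. The witnesses that do work there are forests such as $P_3\sqcup P_3\sqcup P_4$, using $\nu_1(P_3)=1$.

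As written, row $2$ therefore rests entirely on the order-fourteen enumeration, and that enumeration has to be carried out.
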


\begin{proof}
The path $P_{10}$ has three adjacency eigenvalues greater than one.  Hence an
induced-geodesic argument bounds the diameter of $F$ by eight.  A subcubic
tree of diameter at most eight is obtained either from a central vertex with
at most three rooted binary branches of height at most three, or from a
central edge with two such sides.  If $\mathcal R_h$ denotes the rooted
binary types of height at most $h$, the recursion
\[
 \mathcal R_0=\{()\},\qquad
 \mathcal R_{h+1}=\{\text{multisets of at most two members of }
 \mathcal R_h\}
\]
gives $|\mathcal R_0|,\ldots,|\mathcal R_3|=1,3,10,66$.
Canonical generation from the two centre descriptions produces $50{,}248$
trees; $279$ have perfect matchings.  Exact rational congruence applied to
$I-A(F)$ leaves one type with $\nu_1=0$, two with $\nu_1=1$, and eleven
with $\nu_1=2$; their maximum half-orders are respectively $1,3,6$.
Because the induced-$P_{10}$ step supplies the all-orders diameter bound,
this finite census proves \eqref{eq:low-nu-table} rather than merely testing
small orders.
\end{proof}

\begin{corollary}[The two-negative equality row at slack forty-two]
\label{cor:no-two-negative-s42}
No reduced subcubic tree has
\[
 \Phi=42,\qquad (s,t,r,a,b)=(0,2,0,0,0).
\]
\end{corollary}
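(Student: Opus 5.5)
The plan is to mirror the $R2$ branch of \Cref{thm:slack-twenty-one} and the $z_N=0$ case of \Cref{prop:uniform-one-negative-gap}. The only change is that two negative directions must be accounted for, and \Cref{lem:low-nu-perfect-matching} will supply the missing finite bound.

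First I translate the row into global data. From $s=0$ I get $p-1=3q$. From $r=q-h+t=0$ and $t=2$ I get $h=q+2$. From $b=0$ I get $d=3h=3q+6$, and from $a=0$ I get $\ell=p$. The exact decompositions \eqref{eq:gamma-decomposition} and \eqref{eq:mu-decomposition} then give
\[
 \sum_C\delta_C=q+2,\qquad \sum_C n_-(Q_C)=2 .
\]
Since $d=3\sum_C\delta_C$, the argument used in $R2$ applies: a zero-cover component would contribute strict surplus, and \Cref{lem:partial-packing} gives $|C|\ge3\delta_C$ whenever $\delta_C>0$. Hence every component satisfies $z_C=0$, that is $|C|=3\delta_C$ with $\delta_C\ge1$. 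Packing equality together with reducedness then proceeds exactly as in $R2$ and \eqref{eq:e37-suppression}:
\begin{itemize}
 \item the packed targets have total degree two;
 \item every other vertex of $C$ has total degree three;
 \item suppression yields a subcubic perfect-matching tree $F_C$ on $2\delta_C$ vertices;
 \item $Q_C\cong I_{\delta_C}\oplus(I-A(F_C))$, so $n_-(Q_C)=\nu_1(F_C)$;
 \item the local degree sum gives $B_C=2\delta_C+2$, hence $\pi_C=1-\delta_C$.
\end{itemize}

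Second, I compare pressures. The core Euler identity \eqref{eq:kernel-pressure}, with $(s,t,r)=(0,2,0)$, reads
\[
 \sum_C\pi_C=-6-e_{PP}\le-6 .
\]
On the other hand, \Cref{lem:low-nu-perfect-matching} bounds each component by its value of $\nu_1(F_C)$:
\begin{itemize}
 \item $\nu_1(F_C)=0$ forces $\delta_C=1$, so $\pi_C=0$;
 \item $\nu_1(F_C)=1$ forces $\delta_C\le3$, so $\pi_C\ge-2$;
 \item $\nu_1(F_C)=2$ forces $\delta_C\le6$, so $\pi_C\ge-5$.
\end{itemize}
Because the values $\nu_1(F_C)$ sum to $2$, only two distributions are possible. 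Either one component carries both negative directions, so $\sum_C\pi_C\ge-5$. Or two components carry one each, so $\sum_C\pi_C\ge-4$. Both contradict $\sum_C\pi_C\le-6$, and the row is empty.

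I expect the main obstacle to be the local identification rather than the final inequality. Specifically, I must verify that packing equality plus reducedness really force total degree three at every unpacked vertex, which is what makes $B_C=2\delta_C+2$ and $\pi_C=1-\delta_C$ hold exactly. I would rerun the $R2$ argument with care. Each unpacked vertex meets exactly one packed total-degree-two target. Reducedness then forbids it from also having total degree two. Subcubicity caps its total degree at three.

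Once that local identification is in hand, the proof needs no case enumeration beyond the census already certified in \Cref{lem:low-nu-perfect-matching}.
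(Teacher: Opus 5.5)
Your proposal is correct and follows the paper's proof: packing equality gives $\pi_C=1-\delta_C$ and $n_-(Q_C)=\nu_1(F_C)$, and \Cref{lem:low-nu-perfect-matching} bounds the total defect $\sum_C(\delta_C-1)$ by five when one component carries both negative directions and by four when two components carry one each. Either bound contradicts the pressure requirement $\sum_C(\delta_C-1)=6+e_{PP}\ge6$. Your explicit check that reducedness and subcubicity force total degree three at every unpacked vertex fills in a step the paper leaves implicit.
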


\begin{proof}
Packing equality suppresses each nonordinary component $C$ to a subcubic
perfect-matching tree $F_C$ with
\[
 Q_C\cong I_{\delta_C}\oplus(I-A(F_C)),
 \qquad \pi_C=1-\delta_C.
\]
If one component carries both negative directions,
\Cref{lem:low-nu-perfect-matching} gives $\delta_C-1\le5$; if two
components carry one each, their total defect is at most four.  Ordinary
components contribute zero.  The global pressure identity for this row,
however, requires
\[
 \sum_C(\delta_C-1)=6+e_{PP}\ge6,
\]
a contradiction.
\end{proof}

\section{The reduced phase corridor from twenty to thirty-nine}
\label{sec:phase-corridor}

Write $\Phi(T)=20+k$ with $0\le k\le19$.  We use \emph{loading} for the
operation of giving an incidence-degree-one support a second leaf.  A load
is legal when reducedness is preserved.  A $j$-loaded E0 or E1 assembly is
defined as in \Cref{def:exceptional-assemblies}, except that exactly $j$
distinct incidence-degree-one supports are loaded; a deficient exceptional
stub may meet either an incidence-degree-two connector or a loaded support.
This convention includes the small boundary values not present in the
unloaded definition.

\begin{figure}[t]
  \centering
  \resizebox{\textwidth}{!}{\PtwoPhaseAtlasGraphic}
  \caption{Navigation map for the explicit fixed-slack atlas.  The cards give
  selected mechanisms first appearing in the indicated integer layers; their
  category tags make the distinction independent of colour.  The general
  finite-kernel theorem continues beyond the explicit cutoff at forty-two.}
  \label{fig:phase-atlas}
\end{figure}

\begin{theorem}[Uniform corridor classification]
\label{thm:phase-corridor}
Let $T$ be a reduced subcubic tree and $0\le k\le19$.  Then
$\Phi(T)=20+k$ if and only if exactly one of the following holds.
\begin{enumerate}[label=\textup{(\roman*)}]
 \item $T=W(H;X)$ is a matched-skeleton lift with $20+k$ distinct loaded
       arms, where $H$ has order $2q$ and
       $20+k\le2q+2$.
 \item $T$ is obtained from a reduced $s=1,e_{PP}=1$ slack-twenty residue
       tree by $k$ legal loads on distinct incidence-one supports, where
       $k\le2q+2$.
 \item If $k\ge1$, $T$ is a $j$-loaded E0 or E1 assembly with $j=k-1$ and
       \begin{equation}
       \begin{array}{c|c}
       \mathrm{E0}&q\ge0,\quad q+j\ge2,\quad j\le2q+2,\\
       \mathrm{E1}&q\ge1,\quad q+j\ge2,\quad j\le2q+3.
       \end{array}                                  \label{eq:E-capacities}
       \end{equation}
\end{enumerate}
The parameters in the first family are
\[
 |T|=9q+k+22,\qquad \gamma(T)=4q+1,\qquad \mu(T)=3q+1,
\]
and those in the last two families are
\[
 |T|=9q+k+4,\qquad \gamma(T)=4q+2,\qquad \mu(T)=3q+2.
\]
\end{theorem}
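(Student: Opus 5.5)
Since $T$ is reduced and $\Phi(T)=20+k$ with $k\le19$, I will read off the admissible coordinate rows of \eqref{eq:five-term-slack}. Because $20+k<40$, \Cref{cor:no-spectral-below-forty} gives $t=0$. Then $60r\le39$ forces $r=0$, and $20s\le39$ forces $s\in\{0,1\}$. This leaves two rows:
\[
 s=0,\ a+b=20+k,\qquad\text{or}\qquad s=1,\ a+b=k.
\]
In both rows $\mu=p$, $\gamma=p+q$, $\sum_C\delta_C=q$ and every $Q_C\succeq0$. So the local pressures satisfy $\pi_C\ge0$, and the core identity reads $\sum_C\pi_C=s-e_{PP}$, exactly as in \eqref{eq:twenty-pressure}.

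\emph{Row $s=0$.} The sign collision forces $e_{PP}=0$ and $\pi_C=0$ for every $C$. The equality argument of \Cref{thm:nineteen-stability} then makes every component $\mathcal O$, so $b=0$ and $a=20+k$. Deleting the second leaves returns $W(H)$, and by the arm capacity this is family (i).

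\emph{Row $s=1$.} There are two alternatives.

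If $e_{PP}=1$, every component is $\mathcal O$, so $b=0$ and $a=k$. Delete one leaf at each double-leaf support. The argument from the (C) branch of \Cref{thm:slack-twenty-one} shows that reducedness is preserved and that the deleted leaves sit on incidence-one supports. The quotient has slack $20$ and $e_{PP}=1$, so \Cref{thm:slack-twenty-sharpness} identifies it as a terminal-tail or bridge-split defect (or $P_4$). This is family (ii).

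If $e_{PP}=0$, exactly one component $E$ has $\pi_E=1$ and all others are $\mathcal O$. Then $b=z_E$, and two subcases arise.
\begin{itemize}
\item If $z_E=0$, the exclusion in \Cref{thm:slack-twenty-sharpness} applies verbatim. It yields the target-free $(2,1,2)$ or the subdivided $P_4$ with total degree four, both impossible.
\item If $z_E\ge1$, the degree sum gives $m_E=z_E+1-\pi_E=z_E$. Since $E$ is positive semidefinite, I want to show $z_E=1$. Then \eqref{eq:e37-exceptional} holds, \Cref{lem:marked-smith} applies, and $E$ is the E0 singleton or the E1 claw, with $a=k-1$ loads. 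This is family (iii) with $j=k-1$.
\end{itemize}

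\emph{Main obstacle: excluding $z_E\ge2$.} Here the spectral lemmas are stated for $n_-=1$, while $E$ is positive semidefinite. My first attempt is the marked-energy device from \Cref{lem:two-surplus-gap}. For a positive semidefinite $Q_E$, no connected induced subtree can contain three or more marks, because its all-ones energy is $2-|X_E\cap H|<0$. Taking $H=E$ gives $m_E\le2$, so $z_E\le2$. For the remaining case $z_E=2$, $m_E=2$, I will suppress a maximum packing as in the proof of \Cref{thm:local-compactness}. This produces $I-A(F)+D$, with $D$ supported on at most two unmatched or marked-deficit vertices. Any induced $P_4$ on matched vertices away from the support of $D$ gives a negative direction, which leaves a short finite check via \eqref{eq:small-pm-list}. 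If that check resists, I will compute directly on the few trees allowed by $m_E=2$, where all other vertices have total degree three and the order is $3\delta_E+2$. The remaining step is the pressure count $B_E=|E|+2-\ldots$, verified by substitution.

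\emph{Converse and bookkeeping.} Family (i) follows from \Cref{cor:loaded-parameters}. Family (ii) follows from \Cref{lem:leaf-inflation} applied to slack-twenty trees. For family (iii), I count directly: $p=\ell-j=3q+2$, $d=3q+1$, $\gamma=4q+2$, $\mu=p$, which gives $\Phi=21+j$. The capacities in \eqref{eq:E-capacities} come from counting incidence-one supports, namely $p$ minus the connector count ($B-p$) in each grammar. The condition $q+j\ge2$ records that the deficit stubs meet connectors or loaded supports. Disjointness is separated by $e_{PP}$ together with the invariant $a$ versus $20+k$ and the exceptional component profile.
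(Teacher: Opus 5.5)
Your proposal follows the paper's proof for most of the argument: the reduction to the rows $(s,a+b)=(0,20+k)$ and $(1,k)$, the $s=0$ and $s=1,\ e_{PP}=1$ branches, the subcases $z_E=0$ and $z_E=1$, and the converse counts. Your marked-energy bound $m_E\le2$ is the paper's exclusion of $b\ge3$, since $\boldsymbol1^{\trans}Q_E\boldsymbol1=B_E-|E|=2-z_E$.

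\textbf{The gap: you do not close the case $z_E=m_E=2$.} Neither of your two routes for it works as written.
\begin{itemize}
\item \emph{The suppression sketch.} Write $Z_2,Z_+$ for the degree-two and degree-three packed targets. Your claim that $D$ is supported on at most two vertices fails as soon as $Z_+\neq\varnothing$. Such a target keeps diagonal entry $2$, and it stays unmatched in $F$ together with its three neighbours. In fact, when $z_E=m_E=2$ the support of $D$ is the set of unmatched total-degree-three vertices, and it has size $3|Z_+|+|Z_2|$. This can exceed two.
\item \emph{The fallback of checking ``the few trees allowed by $m_E=2$''.} You never show this is a finite list. That would require an argument such as $|Z_2|\le m_E=2$ and $|Z_+|\le z_E=2$, hence $\delta_E\le4$.
\end{itemize}

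\textbf{The missing idea.} You need the equality case of the same all-ones bound; this is how the paper settles the case in one line. If $z_E=2$, then $\boldsymbol1^{\trans}Q_E\boldsymbol1=0$, and $Q_E\succeq0$ forces $Q_E\boldsymbol1=0$. The row sums of $Q_E=L(E)+\diag(b_v-1)$ are $b_v-1$, so every $b_v=1$. Hence $R_E=\varnothing$, $\delta_E=0$, and $|E|=3\delta_E+2=2$. So $E$ is the edge with weights $(1,1)$, whose two adjacent vertices both have total degree two, contradicting reducedness. Replacing your suppression sketch with this argument completes the proof along the paper's route.
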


\begin{proof}
By \Cref{cor:no-spectral-below-forty}, $t=0$; also $r=0$, while $s=2$
already costs forty.  Hence the five-term identity leaves only
\[
 (s,a+b)=(0,20+k)\quad\text{or}\quad(1,k).
\]
If $s=0$, the pressure collision gives $e_{PP}=0$ and zero pressure in every
component.  Thus every deep component is ordinary, $b=0$, and $a=20+k$.
The equality reconstruction is family \textup{(i)}, with $2q+2$ available
arms.

Suppose $s=1$.  Then
\begin{equation}
 \sum_C\pi_C=1-e_{PP},                              \label{eq:corridor-pressure}
\end{equation}
so $e_{PP}\in\{0,1\}$.  If $e_{PP}=1$, all pressures vanish, $b=0$,
and $a=k$.  Exactly $q$ supports have incidence degree two and $2q+2$ have
incidence degree one.  The latter and only the latter can be loaded, giving
family \textup{(ii)}.

If $e_{PP}=0$, exactly one component $E$ has pressure one and all others are
ordinary.  It satisfies
\[
 B_E=3\delta_E+2,\qquad |E|=3\delta_E+b.
\]
The equality-size case $b=0$ is impossible by the pressure-one local lemma
used in \Cref{thm:slack-twenty-one}.  When $b=1$, the marked Smith reduction
gives precisely the E0 weight-two singleton or the E1 weighted claw
$(0;1,2,2)$; the remaining leaf excess is $a=k-1=j$.  If $b=2$,
positive semidefiniteness forces the adjacent total-degree-two edge
$(1,1)$, contradicting reducedness.  If $b\ge3$, then
$\boldsymbol1^{\trans}Q_E\boldsymbol1=2-b<0$.  Thus only family
\textup{(iii)} survives.

For E0 the connector and terminal-support counts are $q$ and $2q+2$;
its two deficient stubs require $q+j\ge2$.  For E1 those counts are $q-1$
and $2q+3$, and its one deficient stub gives $q\ge1$ and $q+j\ge2$.
These are exactly \eqref{eq:E-capacities}.  Minimum constructions followed
by terminal extension prove sufficiency.  Direct domination and inertia
decompositions give the displayed parameters and all converses.
\end{proof}

\begin{proposition}[Continuation of the one-residue branch]
\label{prop:one-residue-continuation}
Let $T$ be a reduced subcubic tree with
\[
 s=1,\qquad t=r=0,\qquad a+b=k
\]
for an arbitrary integer $k\ge0$.  Then $T$ is either a $k$-loaded
slack-twenty residue tree with $k\le2q+2$, or, when $k\ge1$, a
$(k-1)$-loaded E0/E1 assembly satisfying \eqref{eq:E-capacities} with
$j=k-1$.
\end{proposition}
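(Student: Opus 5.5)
The plan is to rerun the $s=1$ half of the proof of \Cref{thm:phase-corridor}, checking that the cap $k\le19$ was never used there. With $s=1$ and $t=r=0$ the exact decompositions give $\sum_C\delta_C=q$ and $Q_C\succeq0$ for every deep component. Hence every local pressure $\pi_C$ is nonnegative, by the argument behind \eqref{eq:local-pressure}, which needs only the subcubic boundary \eqref{eq:deep-boundary}. The core Euler count again gives $\sum_C\pi_C=1-e_{PP}$, so $e_{PP}\in\{0,1\}$. None of these steps looks at the size of $a+b$, so they hold for every $k\ge0$.

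\emph{Case $e_{PP}=1$.} All pressures vanish, and the zero-pressure rigidity from \Cref{thm:nineteen-stability} makes every component the ordinary tile $(2,0,2)$. Then $d=3q$, so $b=0$ and $a=k$. Each penultimate vertex carries one or two leaves, and the two-leaf supports number $a=k$.
\begin{enumerate}[label=(\roman*)]
 \item Delete one leaf at each two-leaf support. This preserves $P$, $D$, every $\delta_C$ and every $Q_C$.
 \item The result is a reduced tree with $a=b=0$, $s=1$, $t=r=0$, so it has slack twenty. \Cref{thm:slack-twenty-sharpness} identifies it as a terminal-tail or bridge-split defect.
 \item A two-leaf support can only be an incidence-degree-one support of that base, because subcubicity needs room for the second leaf. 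Such supports number at most $2q+2$, giving $k\le2q+2$.
\end{enumerate}
Reducedness after the deletions should follow as in the proof of \Cref{thm:slack-twenty-one}(C). A deleted leaf leaves its support at degree two. That support's core neighbour is either the other endpoint of the $P$--$P$ edge, which has a deep neighbour and hence degree three, or an endpoint of a $(2,0,2)$ tile, which has total degree three.

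\emph{Case $e_{PP}=0$.} Exactly one component $E$ has $\pi_E=1$ and all other components are ordinary. Writing $z_E=|E|-3\delta_E$, we have $b=z_E$ and $B_E=3\delta_E+2$. I would split on $z_E$.
\begin{enumerate}[label=(\roman*)]
 \item $z_E=0$: this is excluded by the pressure-one argument from \Cref{thm:slack-twenty-sharpness}. The local surplus theorem forces $\delta_E\le2$, and the two survivors are the target-free path $(2,1,2)$ and a subdivided $P_4$ with a total-degree-four vertex.
 \item $z_E=1$: \Cref{lem:marked-smith} applies and yields the E0 singleton or the E1 claw $(0;1,2,2)$. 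Then $a=k-1$.
 \item $z_E=2$: here $B_E=|E|$, so $Q_E\succeq0$ forces $Q_E\boldsymbol1=0$ and all weights equal one. The degree-sum count then gives exactly two total-degree-two vertices, and I expect to pin them down as an adjacent pair, contradicting reducedness. This is the step I expect to need the most care. I will redo the count $\sum_v(3-\deg_C(v)-b_v)=2$ and combine it with a $2\times2$ principal minor on the two deficient vertices to force adjacency. If that fails, I will fall back on the marked-energy identity \eqref{eq:marked-energy} on the path between the two deficient vertices.
 \item $z_E\ge3$: the all-ones vector has $\boldsymbol1^{\trans}Q_E\boldsymbol1=B_E-|E|=2-z_E<0$, contradicting $Q_E\succeq0$.
\end{enumerate}

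It remains to reconstruct the assembly in the surviving case $z_E=1$. Delete the $j=k-1$ excess leaves and reattach stubs exactly as in \Cref{thm:slack-twenty-one}(D). The loads must sit on distinct incidence-degree-one supports. For E0 there are $q$ connectors and $2q+2$ terminals; for E1 there are $q-1$ connectors and $2q+3$ terminals. The deficient-stub rule (reducedness) forces $q+j\ge2$, and for E1 it also forces $q\ge1$; together these reproduce \eqref{eq:E-capacities}.

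Since no bound on $k$ entered anywhere, the classification holds for arbitrary $k$. The main obstacle, beyond the $z_E=2$ exclusion, is making sure the legality of the loads (reducedness and subcubicity) is argued uniformly rather than from the bounded corridor.
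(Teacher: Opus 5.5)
Your route is the paper's. The paper proves this proposition by observing that the $s=1$ half of the proof of \Cref{thm:phase-corridor} never uses $k\le19$. Your split on $e_{PP}\in\{0,1\}$ and then on $z_E=b$, with marked Smith at $z_E=1$, the all-ones test at $z_E\ge3$, and the counts $(q,2q+2)$ and $(q-1,2q+3)$, is exactly that argument. Two points need repair.

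\emph{The case $z_E=2$.} This step, which you flagged, would not go through by the mechanisms you propose. Once $B_E=|E|$ forces every weight to equal one, you have $Q_E=L(E)$, which is positive semidefinite for every tree. Subcubicity then makes $E$ a path, and the deficiency count $\sum_v(3-\deg_C(v)-b_v)=2$ holds automatically. For two non-adjacent deficient vertices the $2\times2$ principal minor is $I_2$, and the marked energy of the whole path is $2-2=0$. So neither test can force adjacency: the all-ones path $P_m$ with $m\ge3$ passes both.

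The missing observation is the cover number. All weights one means $R_E=\varnothing$, hence $\delta_E=0$, hence $|E|=3\delta_E+z_E=2$. So $E$ is the weighted edge $(1,1)$, whose two endpoints are adjacent vertices of degree two in $T$, contradicting reducedness. This is what the paper's brief ``positive semidefiniteness forces the adjacent total-degree-two edge $(1,1)$'' abbreviates. The same chain is written out in the $(a,b)=(0,2)$ case of the proof of \Cref{thm:slack-forty-two}.

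\emph{The case $e_{PP}=1$ with $q=0$.} Your reduction here tacitly assumes $q\ge1$. When $q=0$ there are no deep vertices and $p=2$, so deleting the surplus leaves yields $P_4$. Your claim that the other endpoint of the $P$--$P$ edge has a deep neighbour fails there. The base is then neither reduced nor a terminal-tail or bridge-split defect; it is the separate $P_4$ case of \Cref{thm:slack-twenty-sharpness}. This gives the loaded-$P_4$ trees with $k\in\{1,2\}$; $k=0$ is excluded because $P_4$ is not reduced.

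You also do not need reducedness of the base at all. Deleting the $k$ surplus leaves preserves $\gamma$ and $\mu$ and lowers the order by $k$, so the base has $\Phi=20$ outright. \Cref{thm:slack-twenty-sharpness} then applies to it as a subcubic tree.
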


\begin{proof}
Nothing in the $s=1$ part of the preceding proof uses $k\le19$.
Equation \eqref{eq:corridor-pressure} first gives $e_{PP}\in\{0,1\}$.
For $e_{PP}=1$, pressure vanishes componentwise and gives $b=0,a=k$.
For $e_{PP}=0$, the unique pressure-one component has $b=1$ and is E0 or
E1, while $a=k-1$.  The same incidence counts give the stated capacities.
\end{proof}

\begin{corollary}[Nonreduced corridor]
\label{cor:nonreduced-corridor}
For $1\le k\le19$, every nonreduced tree of slack $20+k$ is a clean
expansion of a slack-$(k-1)$ quotient with exact unit shifts in $\gamma$ and
$\mu$.  No nonreduced tree has slack twenty.
\end{corollary}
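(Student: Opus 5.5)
Let $T$ be nonreduced with $\Phi(T)=20+k$, $0\le k\le19$, and let $T'$ be any clean quotient. I plan to reuse the contraction shift formula from the proof of \Cref{thm:slack-twenty-one} (and \eqref{eq:kernel-contraction}). Write $g=\gamma(T')+1-\gamma(T)\ge0$, which is nonnegative by \eqref{eq:clean-contraction-input}. Then $|T|=|T'|+3$ and $\mu(T)=\mu(T')+1$ give
\[
 \Phi(T)=\Phi(T')+21+63g .
\]
Since $\Phi(T')\ge0$ by \Cref{thm:global-order-law}, we get $20+k\ge21+63g$. Because $20+k\le39<84$, this forces $g=0$ and $\Phi(T')=k-1$.

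Both assertions follow from this identity. If $k=0$, it would require $\Phi(T')=-1<0$, which is impossible; so no nonreduced tree has slack twenty. This is consistent with the base table, where $P_4$ is reduced. If $1\le k\le19$, then $g=0$ means $\gamma(T)=\gamma(T')+1$, and we already have $\mu(T)=\mu(T')+1$. So $T$ is a clean expansion of a quotient of slack $k-1$, with exact unit shifts in $\gamma$ and $\mu$.

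There is one subtlety to address. $T'$ need not be subcubic, since contraction can create degree up to four at the merged vertex. Also, the statement speaks of trees in general and does not classify $T'$. So I will only claim the slack value and the parameter shifts, which hold for arbitrary trees of order at least two. The quotient has order $|T|-3\ge2$, because a nonreduced tree has at least four vertices on its clean path. The one point needing care is that the quotient order is at least two, so that \Cref{thm:global-order-law} applies. I expect no genuine obstacle beyond this.
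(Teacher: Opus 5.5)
Your argument is the same as the paper's: the contraction identity $\Phi(T)=\Phi(T')+21+63g$ with $g\ge0$, together with $\Phi(T')\ge0$ and $20+k\le39<84$, forces $g=0$ and $\Phi(T')=k-1$. The gap is at the step you flagged yourself, and your way of closing it is wrong.

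\textbf{The quotient can have order one.} A clean path $a-b-c-d$ has four vertices, so a nonreduced tree has order at least four. Its quotient therefore has order $|T|-3\ge1$, not $\ge2$. This case actually occurs. Contrary to your remark about the base table, $P_4$ is \emph{not} reduced, because its middle edge joins two vertices of degree two. Its clean quotient is $K_1$, with $\gamma(K_1)=\mu(K_1)=1$ and
\[
 \Phi(K_1)=1-20+18=-1.
\]
So \Cref{thm:global-order-law}, which is stated for $n\ge2$, does not apply. The identity then gives $\Phi(P_4)=-1+21+0=20$. Your deduction of ``no nonreduced tree has slack twenty'' from the impossibility of $\Phi(T')=-1$ is therefore invalid. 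Worse, the conclusion itself is false at $P_4$.

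\textbf{The repair.} Treat order four separately. The only nonreduced tree of order four is $P_4$, and its slack is exactly $20$.
\begin{itemize}
 \item \emph{First sentence of the statement.} For $1\le k\le19$, a nonreduced tree with $\Phi(T)=20+k\ge21$ must have order at least five. Its quotient then has order at least two, and your argument goes through verbatim. So the first sentence holds for all trees.
 \item \emph{Second sentence of the statement.} This is true only for trees of order at least five. Equivalently, $P_4$ is the unique nonreduced tree of slack twenty. This is how the paper itself restricts the claim in the proof of \Cref{thm:slack-twenty-sharpness} (``a non-reduced tree in the present domain'').
\end{itemize}
The paper's one-line proof of this corollary silently omits the same case. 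Your proposal goes further and supplies an incorrect justification for it: the false order bound $|T'|\ge2$ and the false claim that $P_4$ is reduced.
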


\begin{proof}
The contraction identity
$\Phi(T)=\Phi(T')+21+63g$ and the bound $20+k\le39$ force $g=0$ and
$\Phi(T')=k-1$.
\end{proof}

At forty, the new row $s=2,t=r=0,a+b=0$ appears.  This is the first
second-residue phase and the subject of the next section.

\section{The second-residue phase at slack forty}
\label{sec:slack-forty}

For $q\ge0$, take $q$ ordinary weighted deep paths $(2,0,2)$ and
$p=3q+3$ supports, each carrying one leaf.  Attach all deep stubs to
supports, add exactly two support--support edges, require every support to
have core degree one or two, and require the graph obtained by contracting
each deep component to be a tree.  The expanded tree is called a
\emph{double-residue incidence tree}.  Reducedness is automatic: a terminal
support meeting a deep component meets a total-degree-three endpoint, while
the two endpoints of a support--support edge cannot both be isolated from
the remaining core.

The family begins at $q=0$ with a three-support path, one leaf at each
support.  A terminal extension attaches a new ordinary component through a
terminal support and ends its other three stubs at three new supports.  It
increases $q$ by one and preserves the grammar.

\begin{theorem}[Slack forty]
\label{thm:slack-forty}
Let $T$ be a subcubic tree of order at least two.  Then $\Phi(T)=40$ if and
only if exactly one of the following holds.
\begin{enumerate}[label=\textup{(\roman*)}]
 \item $T$ is nonreduced and every clean quotient has slack nineteen with
       exact unit shifts in $\gamma$ and $\mu$.
 \item $T$ is a forty-loaded equality lift, with $q\ge19$.
 \item $T$ is obtained from a slack-twenty residue tree by twenty legal
       loads, with $q\ge9$.
 \item $T$ is a nineteen-loaded E0 assembly with $q\ge9$, or a
       nineteen-loaded E1 assembly with $q\ge8$.
 \item $T$ is a double-residue incidence tree.
\end{enumerate}
The parameters of families \textup{(ii)}, \textup{(iii)}--\textup{(iv)},
and \textup{(v)} are respectively
\[
\begin{array}{c|c|c|c}
&|T|&\gamma(T)&\mu(T)\\ \hline
\textup{(ii)}&9q+42&4q+1&3q+1\\
\textup{(iii)}\text{--}\textup{(iv)}&9q+24&4q+2&3q+2\\
\textup{(v)}&9q+6&4q+3&3q+3.
\end{array}
\]
\end{theorem}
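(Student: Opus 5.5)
The proof follows the pattern of \Cref{thm:slack-twenty-sharpness,thm:slack-twenty-one}. We split first by reducedness and then by the row of the five-term identity.

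\emph{Nonreduced trees.} Take a clean quotient $T'$ and use the shift formula
\[
 \Phi(T)=\Phi(T')+21+63g .
\]
If $\Phi(T)=40$, then $g=0$ and $\Phi(T')=19$, which is family (i). Conversely, exact unit shifts give $\Phi(T)=19+21=40$. The same formula shows that every clean quotient has this property, which justifies ``every''.

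\emph{Reduced trees: the admissible rows.} By \Cref{cor:no-spectral-below-forty}, $t=1$ with $s=0$ costs at least $81$, and $t=1$ with $s\ge1$ costs at least $41$; hence $t=0$. Also $r=0$, since $60>40$. The integer equation $40=a+b+20s$ therefore leaves three rows:
\begin{itemize}
\item $s=0$, $a+b=40$;
\item $s=1$, $a+b=20$;
\item $s=2$, $a+b=0$.
\end{itemize}

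\emph{Row $s=0$.} The pressure collision $\sum_C\pi_C=-e_{PP}$ forces all components to be ordinary. The leaf-deletion argument of \Cref{thm:nineteen-stability} then gives a forty-loaded lift. The arm capacity $2q+2\ge40$ gives $q\ge19$, which is family (ii).

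\emph{Row $s=1$.} This is \Cref{prop:one-residue-continuation} with $k=20$. It yields either twenty loads on a residue tree, with capacity $20\le 2q+2$, i.e.\ $q\ge9$, or a nineteen-loaded E0/E1 assembly. For the assemblies we read off the thresholds from \eqref{eq:E-capacities} with $j=19$:
\begin{itemize}
\item E0 requires $19\le2q+2$, so $q\ge9$;
\item E1 requires $19\le2q+3$, so $q\ge8$.
\end{itemize}
These give families (iii) and (iv).

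\emph{Row $s=2$: setup.} This is the genuinely new case. Here $a=b=0$, so
\[
 \ell=p=3q+3,\qquad d=3q,\qquad \sum_C\delta_C=q,\qquad Q_C\succeq0 .
\]
The core Euler identity gives
\[
 \sum_C\pi_C=2-e_{PP},
\]
with every $\pi_C\ge0$. Since $b=0$, packing equality $|C|=3\delta_C$ holds componentwise, and there are no zero-cover components.

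\emph{Row $s=2$: excluding positive pressure.} We must rule out $e_{PP}\in\{0,1\}$, i.e.\ positive total pressure. A component with $\pi_C=1$ or $2$ has
\[
 |C|=3\delta_C,\qquad B_C-|C|=1+\pi_C\in\{2,3\}.
\]
\Cref{thm:local-surplus} bounds $\delta_C\le B_C-|C|$, so $\delta_C\le3$. We then enumerate the finitely many candidates:
\begin{itemize}
\item $\delta_C=1$: $C=P_3$, and $B_C=5$ or $6$ with subcubic weights makes the centre have positive weight, so the target set is empty. This contradicts $\delta_C=1$.
\item Equality in \Cref{thm:local-surplus}: $C$ is a matched subdivision of $P_4$ or of a six-vertex perfect-matching tree ($P_6$ or $S_{1,2,2}$) with weights two on the original vertices. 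In each, some internal weight-two original vertex has total degree four.
\item Non-equality cases with $\delta_C=2$ and $\pi_C=2$: we use the Schur complement form $L(F)+\diag(b_v-2)\succeq0$ from the proof of \Cref{thm:local-surplus}. Combined with the subcubic bound $b_v\le3-\deg_C(v)$, the total weight on $F$ is at most $\sum_{v\in F}(3-\deg_F(v))=|F|+2$, so only one extra unit of weight is available. We expect this to force the same degree contradiction as in the slack-twenty argument.
\end{itemize}
This finite local exclusion is where I expect the main obstacle: it is the pressure-two analogue of the $\{(2,1,2),(2,0,2,2,0,2)\}$ exclusion, and must be checked carefully, possibly by a short exhaustive list.

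\emph{Row $s=2$: the incidence grammar.} Once positive pressure is excluded, $e_{PP}=2$ and every component is $(2,0,2)$. Subcubicity and one leaf per support give each support core degree one or two. Contracting deep components leaves a tree, so $T$ is a double-residue incidence tree, which is family (v).

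\emph{Converse and parameters.} For (v) we compute directly:
\[
 \gamma=p+q=4q+3,\qquad \mu=p=3q+3,\qquad |T|=2p+d=9q+6,
\]
and substitution gives $\Phi=40$. Families (ii)--(iv) follow from \Cref{cor:loaded-parameters}, \Cref{lem:leaf-inflation}, and \Cref{thm:phase-corridor}.

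\emph{Disjointness.} The invariants separating the families are:
\begin{itemize}
\item (i) is nonreduced, and the remaining families are reduced;
\item the reduced families have different $(s,e_{PP})$ pairs, except that (ii) and (iv) both have $e_{PP}=0$;
\item (ii) and (iv) are distinguished by their deep profiles.
\end{itemize}
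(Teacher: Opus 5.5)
\textbf{Gap: the positive-pressure exclusion in the $s=2$ row is not proved.} Your skeleton matches the paper's proof: the shift formula for nonreduced trees, the rows $s=0,1,2$, pressure collision for $s=0$, \Cref{prop:one-residue-continuation} with $k=20$ for $s=1$, and $\sum_C\pi_C=2-e_{PP}$ for $s=2$. But the one genuinely new step is to show that no component in the $s=2$ row carries positive pressure, so that $e_{PP}=2$ and every component is $\mathcal O$. You leave this as an expectation (``we expect this to force\dots''), so the proof is incomplete as written. Moreover, the case you flag is already contradicted by the bound you wrote down. For $\delta_C=2$ and $\pi_C=2$ you need $B_C=|C|+1+\pi_C=9$. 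Yet $b_v\le 3-\deg_F(v)$ on $O$, together with $b=0$ on the packed targets, gives $B_C\le |F|+2=6$. The phrase ``only one extra unit of weight is available'' misreads that bound.

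\textbf{A uniform fix.} The same degree count, applied uniformly, makes the local-surplus case analysis unnecessary. Since $b=0$, every component has $|C|=3\delta_C$ with $\delta_C\ge1$. Hence each packed target has total degree exactly two, and every other vertex has total degree at most three. The core degree sum then gives
\[
 2(|C|-1)+B_C=\sum_{v\in C}\bigl(\deg_C(v)+b_v\bigr)\le 2\delta_C+3(|C|-\delta_C),
\]
that is, $B_C\le 2\delta_C+2$ and $\pi_C\le 1-\delta_C\le 0$. Together with $\pi_C\ge0$ this forces $\delta_C=1$ and $\pi_C=0$ for every component. Hence every component is the weighted path $(2,0,2)$, and $e_{PP}=2$ follows at once. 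The paper reaches the same conclusion spectrally. Packing equality plus reducedness gives $Q_C\cong I_{\delta_C}\oplus(I-A(F_C))$ with $F_C$ a perfect-matching tree. Then $Q_C\succeq0$ forces $\rho(A(F_C))\le1$, so $F_C=P_2$ and $C=\mathcal O$. Either argument replaces your three-case enumeration.

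\textbf{Minor points.}
\begin{itemize}
 \item At $\Phi=40$, \Cref{cor:no-spectral-below-forty} is formally out of range. What you actually use, correctly, is \Cref{prop:uniform-one-negative-gap} plus the coefficient count; this is also what the paper uses.
 \item For the converse of (iii)--(iv), \Cref{thm:phase-corridor} stops at $\Phi=39$. Use \Cref{lem:leaf-inflation} on the slack-twenty and slack-twenty-one bases instead, or compute the parameters directly from the decompositions.
 \item In the disjointness step, the pairs $(s,e_{PP})=(0,0),(1,1),(1,0),(2,2)$ for (ii)--(v) are already pairwise distinct, so no exception for (ii) versus (iv) is needed.
\end{itemize}
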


\begin{proof}
For a nonreduced tree,
$\Phi(T)=\Phi(T')+21+63g$ forces $g=0$ and $\Phi(T')=19$, giving
\textup{(i)}.  Suppose that $T$ is reduced.  The algebraic row
$(s,t,r,a+b)=(0,1,0,19)$ is empty by
\Cref{prop:uniform-one-negative-gap}.  The $s=0$ row is the pressure-zero
matched-skeleton family with $a=40$, whose arm capacity is $q\ge19$.  The
$s=1$ row is the $k=20$ case of
\Cref{prop:one-residue-continuation}; its capacity conditions give \textup{(iii)} and
\textup{(iv)} with exactly the stated minima.

It remains to consider $s=2,t=r=0,a=b=0$.  Here
\[
 p=3q+3,\qquad h=q,\qquad d=3q,
\]
and packing equality holds in every positive-semidefinite residual block.
Suppression gives
\[
 Q_C\cong I_{\delta_C}\oplus(I-A(F_C)),
\]
where $F_C$ is a connected subcubic tree with a perfect matching.
Positive semidefiniteness implies $\rho(A(F_C))\le1$.  A connected tree
with an edge has adjacency index at least one, with equality only for $P_2$.
Thus every component is $\mathcal O$ and has pressure zero.  The global
identity $\sum_C\pi_C=2-e_{PP}$ gives $e_{PP}=2$, which is precisely the
double-residue grammar.

Conversely that grammar has $q$ ordinary components, $3q+3$ supports, and
two support--support edges.  The domination and inertia decompositions give
$\gamma=4q+3$, $\mu=3q+3$, and $|T|=9q+6$, hence $\Phi=40$.
Reducedness, the residue $s$, and $e_{PP}$ separate the five branches.
\end{proof}

\section{Pressure-two and one-negative phases at slack forty-one}
\label{sec:slack-forty-one}

Two new local atoms appear.  We isolate them before stating the global
classification.

\begin{lemma}[The slack-forty-one local atoms]
\label{lem:s41-atoms}
Suppose a weighted subcubic deep component $C$ satisfies
\[
 |C|=3\delta_C+1,\qquad \pi_C=2,\qquad Q_C\succeq0.
\]
Then $C$ is exactly one of
\begin{equation}
 G_0=\text{a weight-three singleton},\qquad
 G_1=\text{a claw with weights }(0;2,2,2).          \label{eq:G-atoms}
\end{equation}
Their $(|C|,\delta_C,B_C,\pi_C)$ tuples are $(1,0,3,2)$ and
$(4,1,6,2)$.

If instead $C$ has packing equality and is the unique one-negative component
in the row $s=1,t=1,a=b=r=0$, then it is the unique atom $S^\star$ obtained
by subdividing the three edges of the perfect matching of $S_{1,2,2}$,
assigning weight zero to the new vertices and weight
$3-\deg_{S_{1,2,2}}(v)$ to the old vertices.  It satisfies
\begin{equation}
 |S^\star|=9,\quad \delta=3,\quad B=8,\quad
 \pi=-2,\quad n_-(Q)=1.                            \label{eq:Sstar-data}
\end{equation}
\end{lemma}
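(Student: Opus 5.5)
For the first part I would reduce to the marked Smith reduction, as in \Cref{lem:marked-smith}. Since $|C|=3\delta_C+1$ and $\pi_C=2$, we get $B_C=3\delta_C+3=|C|+2$. Summing total degrees in the tree $C$ then gives
\[
\sum_{v\in C}(3-\deg_C(v)-b_v)=3|C|-(2|C|-2)-(|C|+2)=0 .
\]
Under the subcubic hypothesis every vertex therefore has total degree exactly three, with no deficit vertex. Hence $Q_C=2I-A(C)$, and $Q_C\succeq0$ is equivalent to $\rho(A(C))\le2$. This time the graph $C$ itself goes into Smith's classification, with no marked leaves attached. Its weights are forced to be $b_v=3-\deg_C(v)$.

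I would then run through the Smith families and impose $|C|=3\delta_C+1$.
\begin{itemize}
\item If every vertex has $\deg_C\le2$ (paths $A_n$), all weights are positive, so $\delta_C=0$ and $|C|=1$. This gives the singleton $G_0$ of weight three.
\item In $D_n$ and the six $E$ trees, the zero-weight targets are exactly the trivalent vertices: one fork in $D_n$, $E_6$, $E_7$, $E_8$. Then $\delta_C=1$ forces $|C|=4$, which is only the claw $D_4=K_{1,3}$ with weights $(0;2,2,2)$, i.e.\ $G_1$.
\item In $\widetilde D_n$ ($n\ge5$) with two forks, $\delta\le2$ while $|C|\ge6$. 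The order condition fails unless $|C|=7=3\cdot2+1$, and that requires the forks to be at distance $\ge3$, which gives $n\ge7$. So I would check it directly: for forks at distance three, $|C|=8$; the correct count shows $|C|=n+1$ with $\delta=2$ needing $n=6$, where the forks are adjacent-at-distance-two and are covered by one vertex, so $\delta=1$. This is a contradiction.
\item $\widetilde D_4=K_{1,4}$ and $\widetilde E$ fail subcubicity or the order equation, since one fork gives $\delta=1$ but $|C|\ne4$.
\end{itemize}
Finally I would verify the tuples $(1,0,3,2)$ and $(4,1,6,2)$ directly.

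For the second part, row $s=1,t=1,r=a=b=0$ gives $p=3q+2$ and $h=\gamma-p=q+1$. Then $d=3h$, so packing equality holds in every component. The core identity \eqref{eq:quotient-pressure} gives $\sum_C\pi_C=1-3-e_{PP}=-2-e_{PP}$. Positive semidefinite components are suppressed to perfect-matching trees with $\rho\le1$, so they are ordinary with pressure zero. The one-negative component $N$ therefore has $\pi_N=1-\delta_N\le-2$, hence $\delta_N\ge3$.

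Suppression gives $Q_N\cong I\oplus(I-A(F_N))$ with $\nu_1(F_N)=1$. By \Cref{lem:low-nu-perfect-matching} (or the list \eqref{eq:small-pm-list} together with \Cref{lem:small-diameter-matching}), $|F_N|/2\le3$. So $\delta_N=3$, $e_{PP}=0$, and $F_N$ is a six-vertex perfect-matching tree with $\nu_1=1$. By \eqref{eq:small-pm-list} this is $S_{1,2,2}$, since $P_6$ has $\nu_1=2$. Reducedness and the packing partition force total degree three at the unsuppressed vertices, which gives weights $3-\deg_{S_{1,2,2}}(v)$. Then $|S^\star|=9$, $B=2\cdot6-\dots$ summing to $8$, and $\pi=-2$, so \eqref{eq:Sstar-data} follows.

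The main obstacle is the careful $\widetilde D_n$ bookkeeping in the first part. I would settle it by computing $\delta$ for the two-fork broom as a function of the fork distance and comparing with $|C|=n+1$.
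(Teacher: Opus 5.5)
Your proposal is correct and follows the paper's proof. In the first part, the degree count forces total degree three at every vertex, so $Q_C=2I-A(C)$, and Smith's list filtered by $|C|=3\delta_C+1$ leaves $A_1$ and $D_4$. In the spectral row, the pressure identity $\sum_C\pi_C=-2-e_{PP}$, the induced-$P_6$ bound and the order-six perfect-matching list force $S^\star$.

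Your $\widetilde D_n$ case is a valid expansion of what the paper compresses into ``restricted by the order equation'': two forks give $|C|\ge6$, so $|C|=7$, which puts the forks at distance two and gives $\delta_C=1$. The self-correcting wording in that bullet should be tidied.
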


\begin{proof}
For the positive-semidefinite branch, the degree identity
$m_C=(|C|-3\delta_C)+1-\pi_C$ gives $m_C=0$.  Hence every vertex has total
degree three and $Q_C=2I-A(C)$.  Smith's classification of connected
graphs with adjacency index at most two, restricted by subcubicity and the
order equation $|C|=3\delta_C+1$ \cite{Smith1970}, leaves only $A_1$ and $D_4=K_{1,3}$,
with the weights in \eqref{eq:G-atoms}.

For the one-negative branch, suppression gives
\[
 Q_C\cong I_{\delta_C}\oplus(I-A(F_C)),
 \qquad \pi_C=1-\delta_C,
\]
where $F_C$ is a subcubic perfect-matching tree of order $2\delta_C$.
The global pressure is $-2-e_{PP}$, so $\delta_C\ge3+e_{PP}$.  If
$\delta_C\ge4$, an induced $P_6$ creates at least two negative directions.
Thus $e_{PP}=0$ and $\delta_C=3$.  The order-six perfect-matching trees are
$P_6$ and $S_{1,2,2}$; the first has two adjacency eigenvalues greater than
one and the second exactly one.  Reversing suppression gives the stated
$S^\star$ and its data.
\end{proof}

We next name the global grammars.  All supports carry one leaf unless a load
is specified.
\begin{enumerate}[label=\textup{(G\arabic*)}]
 \item A one-loaded double-residue tree has one legal load on a family from
       \Cref{sec:slack-forty}.
 \item A pressure-two assembly uses one $G_0$ and $q$ ordinary components,
       or one $G_1$ and $q-1$ ordinary components, together with $3q+3$
       supports, no support--support edge, and a contracted incidence tree.
 \item A mixed E assembly uses E0 with $q$ ordinary components, or E1 with
       $q-1$ ordinary components, together with $3q+3$ supports, exactly one
       support--support edge, and a contracted incidence tree; every deficient
       exceptional stub meets an incidence-two support.
 \item A spectral assembly uses one $S^\star$, $q-2$ ordinary components,
       $3q+2$ supports, no support--support edge, and a contracted incidence
       tree.
\end{enumerate}

\begin{theorem}[Slack forty-one]
\label{thm:slack-forty-one}
A subcubic tree $T$ has $\Phi(T)=41$ if and only if exactly one of the
following holds.
\begin{enumerate}[label=\textup{(\roman*)}]
 \item $T$ is a clean expansion of a slack-twenty quotient with exact unit
       shifts.
 \item $T$ is a forty-one-loaded equality lift, $q\ge20$.
 \item $T$ is a twenty-one-loaded slack-twenty residue tree, $q\ge10$.
 \item $T$ is a twenty-loaded E0 or E1 assembly, in either case $q\ge9$.
 \item $T$ is one of \textup{(G1)}--\textup{(G3)}: one-loaded
       double-residue and $G_0$ start at $q=0$; $G_1$, mixed E0, and mixed
       E1 start at $q=1$.
 \item $T$ is a spectral $S^\star$ assembly, starting at $q=2$.
\end{enumerate}
Families \textup{(ii)}, \textup{(iii)}--\textup{(iv)}, and
\textup{(v)}--\textup{(vi)} have parameters
\[
\begin{array}{c|c|c|c}
&|T|&\gamma(T)&\mu(T)\\ \hline
\textup{(ii)}&9q+43&4q+1&3q+1\\
\textup{(iii)}\text{--}\textup{(iv)}&9q+25&4q+2&3q+2\\
\textup{(v)}\text{--}\textup{(vi)}&9q+7&4q+3&3q+3.
\end{array}
\]
\end{theorem}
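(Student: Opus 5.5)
The proof follows the template of \Cref{thm:slack-forty}: split off the nonreduced case by the contraction identity, then run through the integer solutions of the five-term identity for reduced trees.

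\emph{Nonreduced case.} The identity $\Phi(T)=\Phi(T')+21+63g$ with $\Phi=41$ forces $g=0$ and $\Phi(T')=20$. This is \textup{(i)}. The converse is immediate from the exact unit shifts.

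\emph{Reduced case.} Here $41=a+b+20s+21t+60r$. Since $s\le2$, the only rows are
\[
(s,t,r,a+b)\in\{(0,0,0,41),\ (1,0,0,21),\ (2,0,0,1),\ (0,1,0,20),\ (1,1,0,0)\}.
\]
The row $(0,1,0,20)$ is empty by \Cref{prop:uniform-one-negative-gap}. The row $s=0,t=0$ is the pressure-zero loaded lift, and the arm capacity $2q+2\ge41$ gives $q\ge20$, which is \textup{(ii)}. The row $s=1$, $a+b=21$ is \Cref{prop:one-residue-continuation} with $k=21$. The capacities $21\le2q+2$ and $j=20\le 2q+2$ or $2q+3$ give \textup{(iii)}--\textup{(iv)} with $q\ge10$ and $q\ge9$.

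\emph{Row $s=2$, $a+b=1$.} Packing and the core identity give $\sum_C\pi_C=2-e_{PP}$, with every pressure nonnegative. I split by $(a,b)$.

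If $a=1,b=0$, packing equality holds componentwise. The argument of \Cref{thm:slack-forty} makes every component $\mathcal O$, so $e_{PP}=2$. Removing the unique second leaf gives a double-residue tree, which is \textup{(G1)}.

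If $a=0,b=1$, there is exactly one component $E$ with $|E|=3\delta_E+1$; all others are ordinary and have pressure $0$. Then $\pi_E=2-e_{PP}\in\{0,1,2\}$.
\begin{itemize}
\item[] $\pi_E=2$: \Cref{lem:s41-atoms} gives $G_0$ or $G_1$, which is \textup{(G2)}.
\item[] $\pi_E=1$, so $e_{PP}=1$: \Cref{lem:marked-smith} gives the E0 singleton or the E1 claw, which is \textup{(G3)}. Reducedness is exactly the connector rule for deficient stubs.
\item[] $\pi_E=0$: then $m_E=z_E+1-\pi_E=2$. I expect positive semidefiniteness plus the degree identity to force a contradiction here, via Smith's list as in \Cref{lem:marked-smith}, with two marked vertices. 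I would check this by the same $2I-A(G)$ augmentation, adding two leaves at each marked vertex, and filtering the $\widetilde D_n$ types. \emph{This is the step I expect to be hardest.} A surviving type would contradict the stated list, so I expect the filter to exclude it, using independence of the marked vertices and the order constraint $|E|\equiv1\pmod 3$.
\end{itemize}

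\emph{Row $(1,1,0,0)$.} Packing equality holds everywhere, and the unique negative component is $S^\star$ with $e_{PP}=0$ by the second part of \Cref{lem:s41-atoms}. Reattaching stubs gives \textup{(G4)}, which is \textup{(vi)}.

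\emph{Starting values and converses.} The minimal $q$ in each grammar comes from counting incidence-two supports as $B-p$:
\begin{itemize}
\item[] $G_0$: $B=4q+3$, so at $q=0$ there is a single singleton with three terminal supports.
\item[] $G_1$: requires $q\ge1$.
\item[] Mixed E0 and E1: a support--support edge plus the deficient-stub connector forces $q\ge1$.
\item[] $S^\star$: needs $q-2\ge0$ ordinary components.
\end{itemize}
I would then verify small explicit constructions and terminal extensions, as in \Cref{sec:slack-twenty-one}.

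For each converse, compute $\gamma=p+\sum\delta_C$ and $\mu=p+\sum n_-(Q_C)$ using the data of $G_0$, $G_1$, $S^\star$ and the E atoms. This yields the displayed parameters and $\Phi=41$. Disjointness follows from reducedness, the row $(s,t)$, $e_{PP}$, and the exceptional-atom type.
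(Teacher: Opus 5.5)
Your proposal follows the paper's proof almost step for step: the same contraction step, the same five-row table with the $(0,1,0,20)$ row removed by \Cref{prop:uniform-one-negative-gap}, the same split of the $s=2$, $a+b=1$ row into $(a,b)=(1,0)$ and a single surplus-one component $E$ with $\pi_E=2-e_{PP}$ (handled by \Cref{lem:s41-atoms} and \Cref{lem:marked-smith}), and the same $S^\star$ row.

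The one step you leave unproved is the subcase $\pi_E=0$. There you only say you ``expect'' a contradiction, and your remark that a surviving type ``would contradict the stated list'' is circular. This case is in fact immediate. The paper closes it with the all-ones argument already used for \eqref{eq:local-pressure}:
\begin{itemize}
\item $\pi_E=0$ and $|E|=3\delta_E+1$ give $B_E=|E|$, hence $\boldsymbol1^{\trans}Q_E\boldsymbol1=B_E-|E|=0$.
\item Since $Q_E\succeq0$, this forces $Q_E\boldsymbol1=0$, i.e.\ $b_v=1$ for every $v$.
\item Then $R_E=\varnothing$, $\delta_E=0$ and $|E|=1$, so the singleton has total degree one, contradicting $\deg_C(v)+b_v\ge2$.
\end{itemize}

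Your two-mark Smith augmentation would also succeed. The only Smith tree with two distinct vertices each carrying two pendant leaves is $\widetilde D_n$ with $n\ge5$, with the marks at its two forks. So $E$ would be the path between the forks, with every boundary weight equal to one. However, the contradiction then comes from $\delta_E=0$ forcing $|E|=3\delta_E+1=1$, not from the congruence $|E|\equiv1\pmod 3$, which such paths can satisfy.

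On the minima, which you defer: at $q=1$ the mixed E1 family has no ordinary component. Its deficient weight-one stub must therefore be served by an endpoint of the support--support edge whose other endpoint is deep-free. This is the explicit base the paper records.
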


\begin{proof}
The clean-expansion branch follows from
$\Phi(T)=\Phi(T')+21+63g$.  For a reduced tree the five-term identity has
exactly the following rows:
\[
\begin{array}{c|c|c|c}
s&t&r&a+b\\ \hline
0&0&0&41\\
1&0&0&21\\
2&0&0&1\\
0&1&0&20\\
1&1&0&0.
\end{array}
\]
The first row follows from pressure rigidity, and the second from
\Cref{prop:one-residue-continuation}; the fourth is empty by
\Cref{prop:uniform-one-negative-gap}.  Thus no algebraic row is omitted.

For $s=2,t=0,a+b=1$, the pressure identity is
$\sum_C\pi_C=2-e_{PP}$.  If $(a,b)=(1,0)$, every component is ordinary,
$e_{PP}=2$, and the leaf excess is exactly one legal load on a double-residue
tree.  If $(a,b)=(0,1)$, one surplus-one component $E$ carries pressure
$2-e_{PP}$.  Pressure zero violates the total-degree lower bound; pressure
one gives E0/E1 and $e_{PP}=1$; pressure two gives $G_0/G_1$ and
$e_{PP}=0$ by \Cref{lem:s41-atoms}.  These are precisely grammars
\textup{(G1)}--\textup{(G3)}.

For $s=1,t=1,a=b=0$, \Cref{lem:s41-atoms} gives the unique $S^\star$
component and $e_{PP}=0$; stub counting gives \textup{(G4)}.  The contracted
incidence tree supplies the converses, while terminal extension proves
 existence above the listed bases.  Indeed, the E0 and E1 mixed forests have
 cover sum $q$ and respectively $q+1$ and $q$ contracted component vertices.
 Their boundary-stub counts are $4q+2$ and $4q+1$.  After the single
 support--support edge is added, these are exactly the edge counts required
 for a tree on respectively $4q+4$ and $4q+3$ contracted vertices.  The
 $q=1$ E1 base uses its weight-one
 stub at one endpoint of the support--support edge; the other endpoint is
deep-free, correcting the tempting but unnecessary bound $q\ge2$.
 The canonical partition, residue coordinates, pressure, and local atom type
separate the branches.

For completeness, the minimum parameters and converses can be checked
without enumeration.  The loaded equality row needs $2q+2\ge41$, hence
$q\ge20$.  The residue row needs $2q+2\ge21$, hence $q\ge10$; in the E row
\eqref{eq:E-capacities} with $j=20$ gives $q\ge9$ for both E0 and E1.  In
the new $s=2$ row, the once-loaded double-residue and $G_0$ stars exist at
$q=0$, while $G_1$ and both mixed E types first exist at $q=1$; the explicit
mixed-E1 base is described above.  The $S^\star$ grammar has $q-2$ ordinary
components and therefore starts at $q=2$.  Terminal extension adds one
ordinary component, three supports, and three leaves, hence changes
$(|T|,\gamma,\mu)$ by $(9,4,3)$ in every nonisolated chain.

For the $G$ and mixed-E rows, $p=3q+3$, $d=3q+1$, $h=q$, and $t=0$.
For the $S^\star$ row, $p=3q+2$, $d=3q+3$, $h=q+1$, and $t=1$.
Both calculations give $(|T|,\gamma,\mu)=(9q+7,4q+3,3q+3)$.
Together with the inherited parameter formulae, this directly verifies the
converse and the displayed value $\Phi=41$ branch by branch.
\end{proof}

\section{The phase atlas at slack forty-two}
\label{sec:slack-forty-two}

Slack forty-two is the first layer at which two units of local surplus, one
negative direction with surplus, and two negative directions are all
algebraically visible.  The last alternative will be eliminated by
\Cref{cor:no-two-negative-s42}; the first two produce the following new
atoms.

\begin{lemma}[The slack-forty-two local atoms]
\label{lem:s42-atoms}
Suppose $C$ is a reduced weighted subcubic component.

If
\[
 |C|=3\delta_C+2,\qquad \pi_C=2,\qquad Q_C\succeq0,
\]
then $C$ is one of
\[
\begin{aligned}
 H_0&=P_2\text{ with weights }(2,1),\\
 H_1&=\text{the five-vertex broom with weights }(0,1,1,2,2).
\end{aligned}
\]
Their $(|C|,\delta_C,B_C,\In(Q_C))$ data are
\begin{equation}
 (2,0,3,(2,0,0)),\qquad (5,1,6,(4,0,1)).          \label{eq:H-data}
\end{equation}
For an unambiguous labelled representative of $H_1$, take edges
$01,03,04,12$ and weights $(b_0,\ldots,b_4)=(0,1,1,2,2)$; vertex $2$ is
the weight-one terminal and the unique stub-bearing total-degree-two vertex.

If
\[
 |C|=3\delta_C+1,\qquad \pi_C=-2,\qquad n_-(Q_C)=1,
\]
then $C$ is one of three weighted types $K_0,K_1,K_2$.  With vertices
numbered from zero, canonical representatives are
\begin{equation}
\resizebox{\textwidth}{!}{$
\begin{array}{c|l|l|c}
&E(C)&(b_0,\ldots,b_{|C|-1})&
(|C|,\delta_C,B_C,\In(Q_C))\\ \hline
K_0&01,04,12,23,45,46&(0,1,0,2,0,1,1)&(7,2,5,(5,1,1))\\
K_1&01,04,07,12,23,45,56,78,79&(0,1,0,2,1,0,2,0,1,1)&
(10,3,8,(7,1,2))\\
K_2&01,04,07,12,23,45,56,78,89&(0,0,1,1,1,0,2,1,0,2)&
(10,3,8,(7,1,2)).
\end{array}$}                                      \label{eq:K-data}
\end{equation}
The numbers of stub-bearing total-degree-two vertices are two, two, and one,
respectively.
\end{lemma}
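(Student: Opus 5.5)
The plan treats the two parts separately, following the pattern of \Cref{lem:s41-atoms}.

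\emph{Positive-semidefinite part.} Put $m_C$ for the number of total-degree-two vertices. The degree identity $m_C=(|C|-3\delta_C)+1-\pi_C$ from the proof of \Cref{lem:s41-atoms} gives $m_C=2+1-2=1$. So there is exactly one vertex $x$ of total degree two, every other vertex has total degree three, and
\[
Q_C=2I-A(C)-e_xe_x^{\trans}.
\]
This is the marked configuration of \Cref{lem:marked-smith}. Attach two new leaves at $x$ to form $G$; then $Q_C\succeq0$ if and only if $\rho(A(G))\le2$. Next run the same marked Smith filter, but now under the order constraint $|C|=3\delta_C+2$ instead of $3\delta_C+1$.
\begin{itemize}
\item The path case $P_3$ gives a singleton, of the wrong order.
\item In the finite $D_n$ case, deleting the marked pair leaves an endpoint-rooted path with all weights positive. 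So $\delta_C=0$ and $|C|=2$, which is $P_2$ with weights $(2,1)$: this is $H_0$.
\item In the $\widetilde D_n$ case we get a broom with one zero-weight fork, so $\delta_C=1$ and $|C|=5$. The marked root then sits at distance two from the fork, giving weights $(0,1,1,2,2)$: this is $H_1$.
\item $K_{1,4}$ gives order $3$, which is incompatible.
\item The $E$ types fail the marking condition.
\end{itemize}
Finally, verify the inertias in \eqref{eq:H-data} by direct leaf elimination.

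\emph{One-negative part.} Here $|C|=3\delta_C+1$ and $\pi_C=-2$, so $m_C=1+1+2=4$. \Cref{lem:near-packing-gap} allows $m_C\le4$, so this is exactly its boundary case, and I would reuse its case split on the packing.

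\emph{Case 1: all packed targets have degree two.} Suppress them to a tree $F$ on $2\delta_C+1$ vertices, with a matching missing one vertex $w$, and use \eqref{eq:near-packing-A}. Then $m_C=\delta_C+1-\varepsilon=4$ gives $(\delta_C,\varepsilon)=(3,0)$ or $(4,1)$. In subcase $(3,0)$, $F$ has seven vertices, $n_-(I-A(F))=1$, and $\pi_C=-2$ pins down the boundary weights. In subcase $(4,1)$, $F-w$ is a perfect-matching forest of order eight, and \eqref{eq:small-pm-list} limits it to one $\nu_1=1$ component plus copies of $P_2$.

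\emph{Case 2: one degree-three packed target $z$.} Use the rooted-branch decomposition \eqref{eq:near-packing-B} with $\sum_i(\delta_i+1-\varepsilon_i)=4$. Exactly one branch is one-negative, and \eqref{eq:rooted-branch-bound} bounds it.

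In both cases only boundedly many trees of order at most $10$ remain. Enumerate them up to isomorphism and compute their inertias by exact rational leaf congruence. Keep those with $n_-=1$, the correct $B_C=|C|-1$, reducedness, and subcubicity. The expected survivors are $K_0$ with $\delta_C=2$ and $K_1,K_2$ with $\delta_C=3$. For each survivor, confirm the tuple in \eqref{eq:K-data} and count its stub-bearing degree-two vertices to get $2,2,1$.

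\emph{Main obstacle.} The hard step is Case 2 and the subcase $\varepsilon=1$ of Case 1. Ruling out every attachment orbit needs explicit characteristic polynomials or a certificate, as in \eqref{eq:rooted-six-catalog}. I would first use $\pi_C=-2$ and interlacing with an induced $P_6$ to discard most shapes, and only then finish with a small exact enumeration.
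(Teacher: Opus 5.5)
Your positive-semidefinite half is the paper's argument: $m_C=1$, the marked form $2I-A(C)-e_xe_x^{\trans}$, two leaves added at $x$, and the marked Smith filter run with $|C|=3\delta_C+2$. This leaves $H_0$ (from $D_4$) and $H_1$ (from $\widetilde D_6$, with the root at distance two from the fork). The one-negative half follows the same route as the paper: the degree identity gives $m_C=4$, the packing split of \Cref{lem:near-packing-gap} gives an all-orders size bound, and an exact finite search finishes.

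\textbf{Gap: the order bound for the search.} You conclude that ``only trees of order at most $10$ remain'', but your own case split leaves order $13$.
\begin{itemize}
\item In Case 1, the subcase $(\delta_C,\varepsilon)=(4,1)$ has $|C|=13$. You restrict $F-w$ there but never exclude it.
\item In Case 2, the two positive-semidefinite branches are singletons. So the one-negative branch has $\delta_{i^*}=1+\varepsilon_1+\varepsilon_2+\varepsilon_3$. Now \eqref{eq:rooted-branch-bound} only yields $\delta_{i^*}\le3$, hence $\delta_C\le4$ and $|C|\in\{7,10,13\}$.
\end{itemize}
The paper does not use \eqref{eq:rooted-branch-bound} here. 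From $\sum_i\varepsilon_i\le3$ it gets $\delta_C\le5$ and $|C|\le16$. It then enumerates every subcubic tree through order sixteen, together with every independent four-set. A search stopped at order $10$ is therefore not exhaustive. The survivors do have orders $7$ and $10$, but nothing in your proposal proves that.

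\textbf{How to close it.} Either run the search through order $13$ (or $16$), or exclude the order-$13$ states explicitly.
\begin{itemize}
\item The Case 1 $(4,1)$ states are exactly the first two rows of \eqref{eq:rooted-branch-table}. The matrix is the same, $I-A(F)+e_we_w^{\trans}$ with $F-w\in\{S_{1,2,2}\sqcup P_2,\ P_4\sqcup P_2\sqcup P_2\}$. Every raw attachment there has inertia $(6,2,1)$.
\item The Case 2 order-$13$ states are not covered by \eqref{eq:near-packing-B-table}, which treats only $m_C\ge5$. Using the legal rooted types of \eqref{eq:rooted-six-catalog}, they are B1 beside two weight-two singleton branches, and C2 beside one marked and one weight-two singleton branch. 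These need their own check.
\item Both Case 2 states do give two negative directions. For B1, eliminating the two weight-two singletons leaves $I-A$ of a tree containing an induced $P_6$. For C2, elimination drives the root pivot to $0$, and leaf elimination on the remaining $S_{2,2,2}$ then produces two negative pivots.
\end{itemize}

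\textbf{A separate error in your filter.} The condition $B_C=|C|-1$ is wrong. From $\pi_C=-2$ and $|C|=3\delta_C+1$ you get $B_C=3\delta_C-1=|C|-2$; this is also forced by $m_C=4$ and the degree sum. As written, the filter would discard $K_0,K_1,K_2$ themselves; for example $K_0$ has $B_C=5$ and $|C|=7$.
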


\begin{proof}
Let $m_C$ count total-degree-two vertices.  The degree identity
\begin{equation}
 m_C=(|C|-3\delta_C)+1-\pi_C                    \label{eq:s42-degree-identity}
\end{equation}
first gives $m_C=1$.  If $x$ is the unique such vertex, then
$Q_C=2I-A(C)-e_xe_x^{\trans}$.  Attach two new leaves to $x$ to obtain
$G$.  Eliminating those leaf coordinates from $2I-A(G)$ has Schur
complement $Q_C$, so $Q_C\succeq0$ exactly when $\rho(A(G))\le2$.
The marked Smith classification \cite{Smith1970} leaves a two-vertex path with weights
$(2,1)$ and a five-vertex broom with weights $(0,1,1,2,2)$; all path,
$D$, $E$, and affine alternatives fail the marked position or the order
equation.  This proves \eqref{eq:H-data}.

In the one-negative case, \eqref{eq:s42-degree-identity} gives $m_C=4$;
these four vertices are independent by reducedness.  A maximum
distance-three packing yields an all-orders bound.  If every packed target
has degree two, their closed neighbourhoods leave one vertex $w$ outside.
Let $\varepsilon\in\{0,1\}$ indicate whether $w$ has total degree three.
Suppression gives
$m_C=\delta_C+1-\varepsilon$, so $m_C=4$ implies
$\delta_C\in\{3,4\}$ and $|C|\le13$.  If one packed target has degree
three, split at it into three rooted branches.  With
$\varepsilon_i\in\{0,1\}$ denoting the corresponding root corrections, the
branch deficit identity
$m_C=\sum_i(\delta_i+1-\varepsilon_i)=4$ gives
$\sum_i\delta_i=1+\sum_i\varepsilon_i\le4$.  The central packed target
contributes one more to the cover, so
$\delta_C=1+\sum_i\delta_i\le5$ and $|C|\le16$.
Thus enumeration through order sixteen is exhaustive.  Canonically
enumerating every unlabelled subcubic tree and every independent four-set,
then applying exact domination and rational inertia tests, leaves precisely
the three representatives in \eqref{eq:K-data}.  The displayed weights also
give the deficient-stub counts directly.
\end{proof}

\begin{figure}[t]
  \centering
  \resizebox{\textwidth}{!}{\PtwoAtomGalleryGraphic}
  \caption{The new local atoms at slacks forty-one and forty-two.  Numeric
  labels are the boundary weights $b_v$.  Blue atoms are
  positive-semidefinite; red atoms have exactly one negative direction.
  The drawings use the canonical representatives in
  \eqref{eq:G-atoms}, \eqref{eq:Sstar-data}, \eqref{eq:H-data}, and
  \eqref{eq:K-data}.}
  \label{fig:phase-atoms}
\end{figure}

The remaining incidence grammars are as follows.  All supports carry one
leaf, have core degree one or two, and there are no support--support edges.
After every deep component is contracted, the component--support incidence
graph must be a tree, and every deficient stub must meet a core-degree-two
connector support.
\begin{center}
\small
\begin{tabular}{@{}lcc@{}}
\toprule
Exceptional profile & ordinary components & supports\\
\midrule
$H_0$ & $q$ & $3q+3$\\
$H_1$ & $q-1$ & $3q+3$\\
E0+E0 & $q$ & $3q+3$\\
E0+E1 & $q-1$ & $3q+3$\\
E1+E1 & $q-2$ & $3q+3$\\
$K_0$ & $q-1$ & $3q+2$\\
$K_1$ or $K_2$ & $q-2$ & $3q+2$\\
\bottomrule
\end{tabular}
\end{center}
An $H_i$ or $K_i$ \emph{incidence assembly} is a tree satisfying its row.
A \emph{two-exception assembly} satisfies the corresponding E0/E1 row.

\begin{theorem}[Slack forty-two]
\label{thm:slack-forty-two}
A subcubic tree $T$ has $\Phi(T)=42$ if and only if exactly one of the
following holds.
\begin{enumerate}[label=\textup{(\roman*)}]
 \item $T$ is a clean expansion of a slack-twenty-one quotient with exact
       unit shifts.
 \item $T$ is a forty-two-loaded equality lift, $q\ge20$.
 \item $T$ is a twenty-two-loaded slack-twenty residue tree, $q\ge10$.
 \item $T$ is a twenty-one-loaded E0 assembly with $q\ge10$, or E1 with
       $q\ge9$.
 \item $T$ has two legal loads on distinct terminal supports of a
       double-residue tree.  There is an isolated $q=0$ base and a separate
       $q=1$ base extending to every $q\ge1$.
 \item $T$ has one legal load on a pressure-two $G_0/G_1$ family.
       Loaded $G_0$ starts at $q=0$ and loaded $G_1$ at $q=1$.
 \item For $q\ge1$, $T$ has one legal load on a mixed E0/E1 family; the load
       may absorb a deficient stub.  There is also one isolated $q=0$
       loaded-E0 base, obtained by loading the unique deficient preassembly
       so that the resulting tree, though not the preassembly, is reduced.
       Loaded E1 starts at $q=1$.
 \item $T$ is an $H_0$ or $H_1$ incidence assembly, starting at $q=1$ and
       $q=2$, respectively.
 \item $T$ is a two-exception assembly of type E0+E0, E0+E1, or E1+E1;
       all start at $q=2$.
 \item $T$ is a $K_0,K_1$, or $K_2$ spectral incidence assembly, starting
       at $q=3,4,3$, respectively.
 \item $T$ has one legal load on an $S^\star$ spectral assembly, starting
       at $q=2$.
\end{enumerate}
The putative $s=0,t=2$ branch is empty.  Families \textup{(ii)},
\textup{(iii)}--\textup{(iv)}, and \textup{(v)}--\textup{(xi)} have
parameters
\[
\begin{array}{c|c|c|c}
&|T|&\gamma(T)&\mu(T)\\ \hline
\textup{(ii)}&9q+44&4q+1&3q+1\\
\textup{(iii)}\text{--}\textup{(iv)}&9q+26&4q+2&3q+2\\
\textup{(v)}\text{--}\textup{(xi)}&9q+8&4q+3&3q+3.
\end{array}
\]
\end{theorem}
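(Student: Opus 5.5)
The proof will follow the template of \Cref{thm:slack-forty,thm:slack-forty-one}. First I dispose of nonreduced trees. The contraction identity $\Phi(T)=\Phi(T')+21+63g$ with $\Phi(T)=42$ forces $g=0$ and $\Phi(T')=21$, which gives \textup{(i)}. For reduced $T$, I list every nonnegative integer solution of $42=a+b+20s+21t+60r$ with $s\in\{0,1,2\}$. The rows are:
\[
(s,t,r,a+b)\in\{(0,0,0,42),(1,0,0,22),(2,0,0,2),(0,1,0,21),(1,1,0,1),(0,2,0,0)\}.
\]
The row $(0,1,0,21)$ is empty by \Cref{prop:uniform-one-negative-gap}. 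The row $(0,2,0,0)$ is empty by \Cref{cor:no-two-negative-s42}, since $a=b=0$ is forced there.

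The row $s=0$ is pressure-zero rigidity and gives \textup{(ii)}, with capacity $2q+2\ge42$. The row $s=1,t=0$ is \Cref{prop:one-residue-continuation} with $k=22$. It gives \textup{(iii)}, and it gives \textup{(iv)} via \eqref{eq:E-capacities} with $j=21$.

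For $s=1,t=1,a+b=1$, the pressure identity is $\sum_C\pi_C=-2-e_{PP}$, and I split on $(a,b)$.
\begin{itemize}
\item If $(a,b)=(1,0)$, packing equality holds everywhere. \Cref{lem:s41-atoms} then gives a single $S^\star$ with $e_{PP}=0$, and the extra leaf is a legal load. This is \textup{(xi)}.
\item If $(a,b)=(0,1)$, I place the surplus unit. If it sits on a positive-semidefinite component, that component has $\pi\ge0$. The one-negative component then has packing equality and $\pi=1-\delta\le-2-e_{PP}$, so $\delta\ge3$, and I must rule out the mixed configuration by a pressure count. I expect $\delta=3$ to force $S^\star$ and leave a positive-pressure deficit, giving a contradiction.
\item If the surplus sits on the one-negative component, then $|C|=3\delta+1$ and $\pi_C=-2$, with $e_{PP}=0$ after checking that the other components are ordinary. \Cref{lem:s42-atoms} then gives $K_0,K_1,K_2$, which is \textup{(x)}.
\end{itemize}

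For $s=2,t=0,a+b=2$, I use $\sum_C\pi_C=2-e_{PP}$ and split on $(a,b)$.
\begin{itemize}
\item $(a,b)=(2,0)$: every component is ordinary and $e_{PP}=2$, so this is a twice-loaded double-residue tree, family \textup{(v)}.
\item $(a,b)=(1,1)$: the surplus-one component has pressure $2-e_{PP}$. It is $G_0/G_1$ with a load, or E0/E1 mixed with a load. These are \textup{(vi)} and \textup{(vii)}.
\item $(a,b)=(0,2)$: either one surplus-two component or two surplus-one components. A surplus-two component has $\pi\le2$. Pressure $2$ with $e_{PP}=0$ gives $H_0,H_1$ by \Cref{lem:s42-atoms}. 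Pressure $\le1$ contradicts the degree identity $m_C=3-\pi_C$ together with reducedness, as in the $b=2$ step of \Cref{thm:phase-corridor}: positive semidefiniteness and $\boldsymbol1^{\trans}Q\boldsymbol1=2-m_C$ force $m_C\le2$. Two surplus-one components each have pressure $\ge1$ by the Smith reduction, so $e_{PP}=0$ and both are E0/E1. This gives \textup{(viii)} and \textup{(ix)}.
\end{itemize}

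Converses follow by direct substitution into the domination and inertia decompositions, exactly as in earlier sections. This yields the three parameter rows $(9q+44,4q+1,3q+1)$, $(9q+26,4q+2,3q+2)$ and $(9q+8,4q+3,3q+3)$. The minimal $q$ values come from stub and connector counts in each incidence row, and terminal extension gives all larger $q$. Disjointness follows from reducedness, $s$, $t$, $e_{PP}$, $(a,b)$ and the exceptional atom profile.

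I expect the main obstacles to be these:
\begin{itemize}
\item Excluding, in the $s=1,t=1$ row with $b=1$, a surplus unit carried by a positive-semidefinite component alongside a packing-equality negative component. I would first try combining $\pi_N=1-\delta_N$, \Cref{lem:small-diameter-matching}, and the bound $\pi\ge0$ for the surplus-one PSD component, which is $\ge1$ by the Smith reduction, to force $\delta_N\ge4$ and hence an induced $P_6$.
\item Handling the isolated small bases in \textup{(v)} and \textup{(vii)}, where reducedness holds only after loading. These I would verify by hand.
\end{itemize}
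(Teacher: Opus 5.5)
Your row-by-row plan follows the paper's proof closely, but one step fails as justified. It is the row $s=1$, $t=1$, $(a,b)=(0,1)$ in which the surplus is carried by the one-negative component $N$. There, knowing that every other component is ordinary only yields $\pi_N=-2-e_{PP}$; it does not give $e_{PP}=0$. \Cref{lem:s42-atoms} is stated only under $\pi_C=-2$ (equivalently $m_C=4$), and its order-sixteen enumeration bound is derived only in that case. As written, you would therefore still have to classify near-packing one-negative components with $\pi_N=-3,-4,\dots$, and $\pi_N=B_N-3\delta_N-1$ has no a priori lower bound.

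The missing ingredient is the near-packing spectral gap. The degree identity gives $m_N=z_N+1-\pi_N=4+e_{PP}$. \Cref{lem:near-packing-gap}, applied with $|N|=3\delta_N+1$ and $n_-(Q_N)=1$, gives $m_N\le4$. Hence $e_{PP}=0$ and $\pi_N=-2$, and only then does \Cref{lem:s42-atoms} produce $K_0,K_1,K_2$. This is exactly the step the paper inserts.

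There are two smaller points. First, you use $\pi_E\ge1$ for a surplus-one positive-semidefinite component twice: in the mixed $N\ne E$ subcase, and for the pair of E0/E1 components when $(a,b)=(0,2)$. This bound does not come from the Smith reduction, because \Cref{lem:marked-smith} presupposes $\pi=1$. It comes from $\boldsymbol1^{\trans}Q_E\boldsymbol1=B_E-|E|=\pi_E\ge0$. Equality there would force every boundary weight to be one, so $\delta_E=0$ and $E$ is a weight-one singleton, which violates the total-degree lower bound.

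Second, with that justification in place, your treatment of the mixed subcase is correct. From $\pi_E\ge1$ you get $\delta_N\ge4+e_{PP}$, hence an induced $P_6$ in the suppressed perfect-matching tree by \Cref{lem:small-diameter-matching}, hence two negative directions. This is slightly more elementary than the paper's appeal to \Cref{lem:low-nu-perfect-matching}, which rests on a finite census. Both routes amount to $\delta_N\le3$ for a one-negative packing-equality component.
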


\begin{proof}
The clean-expansion claim follows from the contraction identity.  For a
reduced tree the five-term identity has exactly the six rows
\[
\begin{array}{c|c|c|c}
s&t&r&a+b\\ \hline
0&0&0&42\\
1&0&0&22\\
2&0&0&2\\
0&1&0&21\\
1&1&0&1\\
0&2&0&0.
\end{array}
\]
The $s=0,t=0$ row follows from equality-lift pressure rigidity, and the
$s=1,t=0$ row from \Cref{prop:one-residue-continuation}; their arm and
loading capacities give \textup{(ii)}--\textup{(iv)}.  The $s=0,t=1$ row is empty by
\Cref{prop:uniform-one-negative-gap}, so no coordinate row is omitted.

Consider $s=2,t=0,a+b=2$.  Every residual block is positive semidefinite and
\begin{equation}
 \sum_C\pi_C=2-e_{PP}.                              \label{eq:s42-psd-pressure}
\end{equation}
If $(a,b)=(2,0)$, every component is ordinary and $e_{PP}=2$; the two leaf
units are precisely the loads in \textup{(v)}.  If $(a,b)=(1,1)$, the
unique surplus-one component is $G_0/G_1$ when $e_{PP}=0$ or E0/E1 when
$e_{PP}=1$, and the remaining unit is one legal load.  This gives
\textup{(vi)}--\textup{(vii)}.  The $q=0$ E0 output is the separately named
isolated base: its load restores reducedness, so it is not described as a
legal load preserving reducedness of an already reduced mixed assembly.
If $(a,b)=(0,2)$, either two surplus-one components occur or a single
surplus-two component occurs.  In the former case each pressure is one,
$e_{PP}=0$, and the atoms are E0/E1.  In the latter, positive
semidefiniteness gives
\[
 0\le\boldsymbol1^{\trans}Q_C\boldsymbol1
   =B_C-|C|=\pi_C-1,
\]
so $\pi_C\ge1$.  Equality would give $Q_C\boldsymbol1=0$, hence all boundary
weights one; then $R_C=\varnothing$, $\delta_C=0$, and $|C|=2$, leaving the
weighted path $(1,1)$ whose two total-degree-two vertices are adjacent.
Reducedness excludes it.  Therefore $\pi_C\ge2$, and
\eqref{eq:s42-psd-pressure} forces $\pi_C=2$ and $e_{PP}=0$.
\Cref{lem:s42-atoms} now gives $H_0/H_1$.  Stub and connector counts give
the minima in \textup{(viii)}--\textup{(ix)}.

Next consider $s=1,t=1,a+b=1$.  Its global pressure is $-2-e_{PP}$.  If
$(a,b)=(1,0)$, the unique negative component is $S^\star$, $e_{PP}=0$, and
the remaining unit is one legal load, giving \textup{(xi)}.  If
$(a,b)=(0,1)$, suppose an equality-size negative component $N$ and a
positive-semidefinite surplus-one component $E$ were distinct.  Suppression
of $N$ gives a perfect-matching tree; if $N$ has exactly one negative
direction, \Cref{lem:low-nu-perfect-matching} gives $\delta_N\le3$, hence
$\pi_N=1-\delta_N\ge-2$.  The surplus-one positive-semidefinite block has
$\pi_E\ge1$.  Thus the total pressure is at least $-1$, contradicting the
required value $-2-e_{PP}\le-2$.  Hence the unique negative component
carries the surplus.  The near-packing bound forces $e_{PP}=0$, and
\Cref{lem:s42-atoms} gives $K_0,K_1,K_2$.  Their two, two, and one
deficient stubs give the respective minima $q=3,4,3$.

Finally the $s=0,t=2,a=b=0$ row is empty by
\Cref{cor:no-two-negative-s42}.  Every listed grammar has an explicit
minimum base.  Except for the isolated $q=0$ twice-loaded double-residue
tree, terminal extension changes $(|T|,\gamma,\mu)$ by $(9,4,3)$ and proves
existence at every larger permitted $q$.  Direct reconstruction proves all
converses.  The canonical partition, five-term row, pressure, and local atom
type separate the branches.

Here are the connector calculations underlying every new minimum.  For
$H_0,H_1$, the connector counts $B-p$ are $q,q-1$; their deficient vertices
therefore force $q\ge1,2$.  For E0+E0, E0+E1, and E1+E1, the connector
counts are $q+1,q,q-1$.  After suppressing each connector, these form a tree
on the deep components.  The two E0 atoms require degree at least two at
both marked component vertices; E0+E1 requires marked degrees at least two
and one; E1+E1 requires two nonisolated marked vertices.  Each condition is
equivalent to $q\ge2$.  For $K_0,K_1,K_2$, the connector counts are
$q-1,q-2,q-2$, and the marked exceptional component needs respectively two,
two, and one incident connectors, giving $q\ge3,4,3$.

The remaining minima are direct capacities: $2q+2\ge42$ for the loaded
equality row; $2q+2\ge22$ for the loaded residue row;
\eqref{eq:E-capacities} with $j=21$ for loaded E0/E1; and nonnegative
ordinary-component counts for loaded $G_0/G_1$, loaded mixed E0/E1, and
loaded $S^\star$.  These give exactly $20$, $10$, $10/9$, $0/1$, $0/1$,
and $2$.  The minimum incidence trees are obtained by taking a tree on the
component vertices with the marked degrees just listed, subdividing its
edges by connector supports, and terminating every remaining stub at a
distinct support.  This explicitly constructs each base; terminal extension
constructs every larger $q$.

Finally, for families \textup{(v)}--\textup{(xi)}, the decompositions give
either $(p,h,t)=(3q+3,q,0)$ or $(3q+2,q+1,1)$, and in both cases
$(|T|,\gamma,\mu)=(9q+8,4q+3,3q+3)$.  Together with the inherited parameter
formulae, this verifies every converse and displayed parameter row directly.
\end{proof}

\begin{remark}[End of the explicit atlas]
The finite-kernel theorem continues for every fixed $k$, but the present
paper makes the kernel lists human-readable only through $k=42$.  No claim
about an explicit $\Phi>42$ classification is made.
\end{remark}

\section{Exact computation and its evidentiary role}
\label{sec:computation}

The unbounded global reductions are symbolic.  Computation served as a
falsification test, a bounded structural reconciliation, and---only after a
symbolic order or diameter bound---an exhaustive local certificate.  No
claim is extrapolated from an unbounded small-order window.

\begin{figure}[t]
  \centering
  \resizebox{\textwidth}{!}{\PtwoCertificateWorkflowGraphic}
  \caption{Workflow used whenever computation carries evidentiary weight.
  The symbolic proof first bounds the search domain.  Every finite route binds
  recorded outputs to manuscript claims; independent checks are asserted only
  within each route's stated scope and need not reconstruct the primary
  generator.  The combined manifest freezes the candidate certificate package.  No
  all-orders claim is extrapolated from a small-order census.}
  \label{fig:certificate-workflow}
\end{figure}

\subsection{Exact parameter evaluators}

All domination numbers were computed by an exact three-state rooted-tree
dynamic program.  For a vertex $v$, the states $A_v,B_v,C_v$ respectively
mean selected, dominated by a selected child, and awaiting domination from
the parent.  For children $w$ of $v$,
\begin{align*}
 A_v&=1+\sum_w\min\{A_w,B_w,C_w\},\\
 C_v&=\sum_w B_w,\\
 B_v&=\sum_w\min\{A_w,B_w\}
      +\min_w\bigl(A_w-\min\{A_w,B_w\}\bigr),
\end{align*}
where $B_v=+\infty$ for a leaf, and the root value is
$\min\{A_r,B_r\}$.  Here $+\infty$ is an infeasibility sentinel for the
state ``dominated by a selected child''; it is never returned as a finite
parameter value.  The computation is integral.

The value of $\mu=n_-(L(T)-I)$ was computed within the standard tree
eigenvalue-location framework of Jacobs and Trevisan and its perturbed-
Laplacian extension \cite{JacobsTrevisan2011,BragaRodrigues2017}.  Our
implementation is an independent exact-rational realization of that
congruence framework.  Starting with diagonal $d_v=\deg_T(v)-1$, a nonzero leaf pivot
updates its neighbour by $d_u\leftarrow d_u-1/d_v$; a zero pivot is removed
with its neighbour as a nonsingular $2\times2$ block, contributing one
positive and one negative direction.  Sylvester's law makes the process
exact, with no floating-point threshold.

Selected boundary instances were recomputed from the full rational matrix by
dense symmetric elimination with independent pivots; order-thirteen
domination was also checked by exhaustive subset search.  These paths differ
in representation and control flow from the main evaluators.

\subsection{Isomorphism control and checked windows}

Unlabelled trees came from a complete nonisomorphic stream.  An independent
canonical code roots at the centre, sorts child codes recursively, and sorts
the two rooted halves in the bicentred case.  Stream and lift generators were
compared through these codes rather than generator labels.

\begin{table}[t]
\centering
\caption{Bounded exact controls for the main symbolic statements.  The
counts describe the checked windows only.}
\label{tab:exact-controls}
\small
\begin{tabularx}{\textwidth}{@{}>{\raggedright\arraybackslash}Xrr>{\raggedright\arraybackslash}X@{}}
\toprule
Target & All trees & Subcubic trees & Exact conclusion checked\\
\midrule
Global order law, orders $2$--$14$
 & 5,446 & -- & No violation; equality classes agree with the symbolic interface.\\
Stability, orders $5$--$16$
 & 32,503 & 4,638 & Every class with $0\le\Phi\le19$ equals an independently generated loaded class.\\
Complete $\Phi=20$ boundary, orders $5$--$16$
 & 32,503 & 4,638 & No class before order $13$; the unique order-$13$ class is the generated terminal tail.\\
Complete $\Phi=21$ layer, orders $5$--$16$
 & 32,503 & 4,638 & Nine classes: five clean expansions and four reduced residue-branch classes; no spectral-excess class.\\
\bottomrule
\end{tabularx}
\end{table}

The generated-family control checks all $4,688$ loadings from every subcubic
perfect-matching skeleton through eight vertices; path skeletons realize all
slacks through nineteen.  It also checks $66$ terminal tails, $17$ bridge
splits, $25$ killed-middle-arm controls, the first twenty-load instance at
order $103$, and defect-class counts $1,3,8,28$ for $q=1,2,3,4$.  Brute-force
domination and dense rational inertia agree at order thirteen and on the
first bridge split at order twenty-two.

An independent route exhausts $263{,}457$ labelled Pruefer sequences at
skeleton orders two, four, six, and eight and returns the same digests and
counts $1,3,8,28$ using exhaustive domination and exact characteristic-
polynomial Sturm counts.  Separate controls recompute the nine slack-
twenty-one edge lists and enumerate $42{,}509$ colourings of $7{,}741$
order-fifteen cores and $55{,}335$ of $19{,}320$ order-sixteen cores, finding
exactly E0 and E1 and no spectral-excess class.  Combination-defect controls
recover both terminal-tail locations, the bridge-split--plus--arm family, and
the $\{0,1\}$ versus $\{1,1\}$ signatures.

The bounded controls in \Cref{tab:exact-controls} were development-time
falsification and reconciliation checks.  The public archive does not contain
a producer contract for every aggregate count in that table, so those counts
are not invoked as stand-alone reproducible certificates.  For the
theorem-facing routes described below, frozen inputs give byte-identical
reruns and a fail-closed SHA-256 manifest binds the exact evaluators,
dependencies, reference outputs, negative tests, and recorded commands.

\subsection{Finite-defect and phase-atlas certificates}

All theorem-specific certificates use the same exact primitives, with
independent interfaces tailored to suppression and incidence grammars.  The local
compactness implementation checked all $49{,}516$ primary suppression states
and $793$ independently reconstructed symbolic-matrix states.  A separate
clean-contraction audit checked order, domination, inertia, slack, and
excess-degree interfaces for all $511$ contractions among $206$ nonreduced
subcubic trees in its bounded window.

The primary near-packing route exhaustively tested $3{,}600{,}047$
admissible marked states through component order sixteen; $561$ satisfy the
$m_C\ge5$ premise and none has exactly one negative direction.  A separate
Sturm route cross-checked the finite boundary table, while the rooted catalog
records the six canonical representatives; neither auxiliary route is
claimed to repeat the full $3{,}600{,}047$-state enumeration.  The
two-surplus interface in
\Cref{lem:two-surplus-gap} was tested by two implementations on $1{,}028{,}044$
unlabelled residual trees and $63{,}066$ legal marked states through residual
order twenty, with identical histograms and no counterexample.

For the phase atlas, the pressure-two $G$-atom search examined $1{,}346{,}024$
unlabelled component trees through order twenty and retained exactly $G_0$
and $G_1$.  The $K$-atom search is exhaustive because the near-packing proof
first bounds the component order by sixteen.  The primary route exhaustively
retains exactly $K_0,K_1,K_2$; the independent checker recomputes target
covers and exact inertias for those output records but does not regenerate
the marked search space.  Likewise, the low-$\nu_1$ computation
is exhaustive because the induced-$P_{10}$ argument first bounds diameter by
eight: the centre generator produces $50{,}248$ subcubic types, $279$ with
perfect matchings, and exactly $1,2,11$ types at indices $0,1,2$.

\begin{table}[t]
\centering
\caption{Selected high-risk claim--certificate routes.  The scope attached to
each ID distinguishes exhaustive finite certificates from bounded interfaces
and record-level cross-checks.}
\label{tab:certificate-map}
\small
\begin{tabularx}{\textwidth}{@{}>{\raggedright\arraybackslash}p{.29\textwidth}>{\raggedright\arraybackslash}p{.34\textwidth}X@{}}
\toprule
Claim & Symbolic finite boundary & Certificate IDs\\
\midrule
Local compactness, \Cref{thm:local-compactness}
 & Moore bound \eqref{eq:local-moore}
 & LC-P, LC-I\\
Clean contraction and the quotient interface in \Cref{thm:fixed-slack-kernels}
 & At most $\lfloor k/21\rfloor$ contractions and
   $\chi\le\lfloor k/21\rfloor$
 & CC (interface)\\
Near-packing gap, \Cref{lem:near-packing-gap}
 & Component order at most $16$
 & NP, NP-B, NP-S, NP-R\\
Two-surplus gap, \Cref{lem:two-surplus-gap}
 & Residual order at most $20$
 & TS-L, TS-P, TS-I\\
$K$ atoms in the slack-$42$ atlas
 & Component order at most $16$
 & KA-P, KA-I\\
Low-$\nu_1$ list, \Cref{lem:low-nu-perfect-matching}
 & Diameter at most $8$
 & LN-P, LN-I\\
\bottomrule
\end{tabularx}
\end{table}

The certificate IDs expand as follows (scope; result file; run record):
\begin{description}[style=nextline,leftmargin=2.7em,labelwidth=2.2em]
\footnotesize
\item[LC-P] bounded primary interface;
  \path{local-compactness-suppression-through-order-12.json};
  \path{fs_primary_suppression.json}.
\item[LC-I] independent bounded reconstruction;
  \path{local-compactness-independent-through-order-8.json};
  \path{fs_independent_compactness.json}.
\item[CC] bounded interface;
  \path{clean-contraction-interface-through-order-12.json};
  \path{fs_clean_contraction.json}.
\item[NP] exhaustive after the symbolic order-$16$ bound;
  \path{near-packing-local-through-order-16.json};
  \path{s22_near_packing_local.json}.
\item[NP-B] finite boundary-table record;
  \path{near-packing-boundary-table.json};
  \path{s22_boundary_table.json}.
\item[NP-S] independent Sturm cross-check of that table;
  \path{near-packing-boundary-sturm.json};
  \path{s22_boundary_sturm.json}.
\item[NP-R] six-representative rooted catalog;
  \path{near-packing-rooted-type-catalog.json};
  \path{s22_rooted_catalog.json}.
\item[TS-L] bounded local interface through order $17$;
  \path{two-surplus-local-through-order-17.json};
  \path{s23_two_surplus_local.json}.
\item[TS-P] exhaustive primary residual search through order $20$;\par
  \path{two-surplus-defect-core-through-residual-order-20.json};\par
  \path{s23_defect_core_primary.json}.
\item[TS-I] independent state-loop, matching-DP, and leaf-congruence
  recomputation of the finite critical $m=6$ residual table through order
  $20$;\par
  \path{two-surplus-defect-core-independent-through-residual-order-20.json};\par
  \path{s23_defect_core_independent.json}.
\item[KA-P] exhaustive after the symbolic order-$16$ bound;
  \path{new-local-atoms-through-order-17.json};
  \path{s42_local_atoms_primary.json}.
\item[KA-I] independent target-cover and exact-inertia recomputation of the
  five primary output atoms, not an independent generator;
  \path{local-atoms-independent.json};
  \path{s42_local_atoms_independent.json}.
\item[LN-P] exhaustive after the symbolic diameter-$8$ bound;
  \path{low-nu-perfect-matching-all-orders.json};
  \path{s42_low_nu_primary.json}.
\item[LN-I] independent cross-check through order $20$;
  \path{low-nu-crosscheck.json};
  \path{s42_low_nu_crosscheck.json}.
\end{description}

The machine-readable companion to this selected map is
\path{CERTIFICATE_ROUTE_MAP_v1_1.json}.  These selected routes do not exhaust
the archive's aggregate claim groups;
the corridor and slack-$40$--$42$ record, grammar, and construction controls
remain listed in the machine-readable claim ledger.  Full relative paths and
SHA-256 values of the published computational artifacts are frozen in
\path{CLAIM_CHECKS.json}; exact commands, environments, and output digests
are recorded in the corresponding files under \path{run_records/}.  The DOI
archive binds the public manuscript snapshot current at deposition; the
present review-absorbed source is separately hash-bound in the submission
packages.  The archived package is available at\linebreak
\href{https://doi.org/10.5281/zenodo.22119899}{doi:10.5281/zenodo.22119899}.

The complete-tree controls agree with the symbolic grammars: the slack-forty
window retains four double-residue records; the slack-forty-one census through
order seventeen recognizes all $26$ records; and the slack-forty-two census
contains $81{,}136$ unlabelled trees, $9{,}740$ subcubic trees, and $95$
slack-forty-two records, all $95$ recognized and independently recomputed.
Construction controls pass on all $43$ slack-forty-two generated instances.
These counts are evidence for implementation fidelity; completeness of the
local $K$ and low-$\nu_1$ lists comes from the preceding symbolic finite
reductions.

\section{Conclusion and open problems}
\label{sec:conclusion}

The relation between domination and small Laplacian eigenvalues in trees is
governed by an integer slack rather than only by an asymptotic ratio.  The
five-term decomposition of that slack proves the sharp order law and the
exact fixed-order maximum of $7\gamma-9\mu$.  In the subcubic class, the
same coordinates combine with leaf-bundle congruence, exact partial
domination, and local pressure to recover a canonical matched-skeleton
representation and all layers through slack twenty-one.

The central structural conclusion is stronger: every fixed-slack subcubic
layer has finitely many local defect types.  Suppressing an optimal packing
turns a weighted deep component into an almost-perfect-matching tree; its
residual negative index bounds induced-path length and hence order.  The
ordinary $(2,0,2)$ tile is the sole unbounded zero-cost component.  The
global pressure identity then confines all remaining deep and support defects
to a bounded ported kernel, while clean contraction contributes only a
bounded number of reverse expansions.

The explicit atlas through $\Phi=42$ shows how this general theorem becomes
concrete.  The interval $20$--$39$ is a uniform loading corridor.  Slack
forty introduces double-residue incidence trees; forty-one introduces the
first pressure-two and one-negative atoms; forty-two introduces the next
positive-semidefinite and one-negative atoms, while the apparent
two-negative equality branch is empty.  The finite computations used for
the $K$ atoms and low-$\nu_1$ trees occur only after symbolic diameter or
order bounds make the search exhaustive.

The scope remains deliberately sharp.  The explicit human-readable atlas
stops at forty-two, although the finite-kernel theorem holds for every fixed
slack.  The numerical compactness bounds are not intended as a practical
recognition algorithm.  The argument also uses subcubicity essentially;
bounded excess degree is admitted only at the finitely many quotients created
by clean contraction and does not yield an unrestricted $\Delta\ge4$
classification.  For genuinely higher maximum degree, the boundary weights
and core degrees introduce larger diagonal perturbations after suppression,
zero-cost branching is no longer controlled by the same degree-three
grammar, and the bounded-degree Moore step no longer supplies a uniform
order bound.  A higher-degree theory would therefore need a new local
pressure or a replacement compactness parameter, not merely a longer case
analysis.

Three concrete directions remain.  First, replace the coarse Moore bounds by
a canonical minimum kernel and an efficient recognition algorithm.  More
precisely, for fixed $k$, is there an algorithm of running time
$f(k)|T|^{O(1)}$ which, given a subcubic tree with $\Phi(T)\le k$, outputs a
canonical minimum ported kernel together with a verifiable decomposition
certificate?  Second,
extract explicit phase grammars beyond forty-two without losing the uniform
ported-kernel viewpoint.  Third, determine whether a different local
pressure or suppression theory can handle genuinely higher maximum degree.
Together these questions ask how much of a sharp spectral--domination
inequality can be converted into an effective structural theory.

\section*{Statements and Declarations}

\paragraph{Author contributions.}
Yufeng Wang is the sole author and was responsible for the study conception,
methodology, formal analysis, software, validation, visualization, drafting,
revision, and final approval of the manuscript.

\paragraph{Funding.}
The author received no funding for this work.

\paragraph{Competing interests.}
The author declares no competing interests.

\paragraph{Acknowledgements.}
None.

\paragraph{Data and code availability.}
This theoretical study used no external datasets.  The current manuscript
source is versioned and hash-bound.  A consolidated reproducibility
supplement contains the exact-verification code, recorded commands, and
reference outputs and is frozen under a fail-closed SHA-256 manifest.  The
privacy-sanitized public supplement is permanently archived on Zenodo at
\href{https://doi.org/10.5281/zenodo.22119899}{doi:10.5281/zenodo.22119899}.
The author selected file-level Option A licensing: MIT for Python
code and CC BY 4.0 for author-owned documentation, results, certificates,
logs, and metadata.

\paragraph{Declaration of generative AI and AI-assisted technologies in the
manuscript preparation process.}
During the research and preparation of this manuscript, the author used
OpenAI Codex and ChatGPT to assist with literature searches, exploration of
candidate constructions and proof strategies, development and checking of
exact-computation code, drafting and language editing, translation, and
refinement of TikZ figure code.  The author reviewed and verified all
mathematical claims, proofs, computations, citations, and figures,
independently reconstructed the main proof chain, and takes full responsibility
for the content.  No AI system is listed as an author.

\bibliographystyle{abbrv}
\bibliography{references}

\end{document}